\newtheorem{conj}{Conjecture}[section]
\newtheorem{thm}[conj]{Theorem}
\newtheorem{rem}[conj]{Remark}
\newtheorem{lem}[conj]{Lemma}
\newtheorem{prop}[conj]{Proposition}
\newtheorem{ques}[conj]{Question}
\newtheorem{defn}[conj]{Definition}
\newtheorem{cor}[conj]{Corollary}
\newtheorem*{rep@theorem}{\rep@title}
\newcommand{\newreptheorem}[2]{%
\newenvironment{rep#1}[1]{%
 \def\rep@title{#2 \ref{##1}}%
 \begin{rep@theorem}}%
 {\end{rep@theorem}}}
\newcommand{\vol}{\mathrm{Vol}}
\newcommand{\R}{\mathbb{R}}
\newcommand{\N}{\mathbb{N}}
\def\s{\mathbb{S}}
\def\R{{\mathbb R}}
\def\phi{\varphi}
\def\bee{\begin{eqnarray*}}
\def\ene{\end{eqnarray*}}
\newcommand\nnfootnote[1]{%
  \begin{NoHyper}
  \renewcommand\thefootnote{}\footnote{#1}%
  \addtocounter{footnote}{-1}%
  \end{NoHyper}
}
\begin{document}

\title{Weighted Brunn-Minkowski Theory II \\
    \large Inequalities for Mixed Measures and Applications}

\author{Matthieu Fradelizi\thanks{Supported in part by the  B\'ezout Labex funded by ANR, reference ANR-10-LABX-58}, Dylan Langharst\footnotemark[1] 
 \thanks{Supported in part by the U.S. National Science Foundation Grant DMS-1101636 and
the United States - Israel Binational Science Foundation (BSF) Grant 2018115} 
\thanks{Funded by the FSMP Post-doctoral program},
Mokshay Madiman, 
and Artem Zvavitch\footnotemark[1] \footnotemark[2]}

\date{\today}
\maketitle

\begin{abstract}
In ``Weighted Brunn-Minkowski Theory I", the prequel to this work, we discussed how recent developments on concavity of measures have laid the foundations of a nascent weighted Brunn-Minkowski theory. In particular,
 we defined the mixed measures of three convex bodies and obtained its integral representation. In this work, we obtain inequalities for mixed measures, such as a generalization of Fenchel's inequality; this provides a new, simpler proof of the classical volume case. Moreover, we show that mixed measures are connected to the study of log-submodularity and supermodularity of the measure of Minkowski sums of convex bodies. This elaborates on the recent investigations of these properties for the Lebesgue measure. We conclude by establishing that the only Radon measures that are supermodular over the class of compact, convex sets are multiples of the Lebesgue measure. Motivated by this result, we then discuss weaker forms of supermodularity by restricting the class of convex sets.
\end{abstract}
\tableofcontents

\section{Introduction}
\nnfootnote{Keywords: Brunn-Minkowski theory, surface area, Gaussian measure, zonoids, mixed volumes, mixed measures

Mathematics Subject Classification 2020 - Primary: 52A20 and 52A21, Secondary: 46T12
}
The study of Borel measures on $\R^n$ having concavity has a rich history. We recall for compact sets $K,L\subset\R^n,$ their \textit{Minkowski sum} is precisely $K+L=\{a+b:a\in K,b\in L\}.$ Recall that a Borel measure is said to be Radon if it is locally finite and regular. We say a collection $\mathcal{C}$ of Borel subsets of $\R^n$ is a \textit{class} if it is closed under Minkowski summation and dilation, i.e. $K,L\in\mathcal{C}$ implies $tK+sL\in\mathcal{C}$ for all $t,s\geq 0$. The study of concavity of measures can be encapsulated in the idea of $F$-concave measures:
A Radon measure $\mu$ on $\R^n$ is said to be $F$-concave on a class $\mathcal{C}$ if there exists a continuous, strictly monotonic function $F:(0,\mu(\R^n))\to (-\infty,\infty)$ such that, for every $t \in [0,1]$ and for every pair $K,L \in \mathcal{C}$  such that $\mu(K),\mu(L)>0$ one has
  \begin{equation} \mu((1-t)K +t L)\geq F^{-1}\left((1-t)F(\mu(K)) +t F(\mu(L))\right). \label{eq:fcon} \end{equation}
  If $F(x)=x^s$, $s\in \R\setminus\{0\}$, then we say that the measure $\mu$ is $s$-concave. For $F(x)=\ln(x)$, the measure $\mu$ is $\log$-concave or $0$-concave and \eqref{eq:fcon} takes the form 
  $$\mu((1-t) K +tL)\geq\mu(K)^{1-t}\mu(L)^{t}.$$
  Recall that a Borel measure is said to have \textit{density} if it has a locally integral Radon-Nikodym derivative with respect to the Lebesgue measure.  C. Borell's classification of $s$-concave Radon measures \cite[Theorem 3.2]{Bor75a} shows they have a density with respect to the Lebesgue on the appropriate subspace that also has a concavity property.  Specifically, $\vol_n$, the Lebesgue measure on $\R^n,$ is $1/n$-concave on the class of all compact subsets of $\R^n$; this is classically known as the \textit{Brunn-Minkowski inequality}. 
An important $\log$-concave measure is the standard Gaussian measure on $\R^n,$ which is given by $$d\gamma_n(x)=\frac{1}{(2\pi)^{n/2}}e^{-|x|^2/2}dx,$$
where $|\cdot|$ denotes the standard Euclidean norm. However, the Gaussian measure actually has other types of concavity than just log-concavity. Indeed, the \textit{Ehrhard inequality} states that $\gamma_n$ is $\Phi^{-1}$-concave on the class of all Borel subsets of $\R^n$, where $\Phi(x)=\gamma_1((-\infty,x])$. It was first proven by A. Ehrhard for the case of two closed, convex sets \cite{Ehr83}.  R. Lata{\l}a \cite{Lat96} generalized Ehrhard's result to the case of one arbitrary Borel set and one convex set; the general case for two Borel sets was proven by C. Borell \cite{Bor03}. Since $\Phi$ is log-concave, the log-concavity of the Gaussian measure is weaker than the Ehrhard inequality.

In recent decades, many people have worked to move beyond Borell's classification by restricting the class $\mathcal{C}$; usually, $\mathcal{C}$ is taken to be a class of convex bodies (compact, convex sets in $\R^n$ with non-empty interior). A set $K$ is said to be symmetric if $K=-K.$ R. Gardner and the last named author \cite{GZ10} conjectured that, for symmetric convex bodies $K$ and $L$ and $t\in[0,1]$,
\begin{equation}\label{e:gamma_gaussian}
    \gamma_n\left((1-t) K + t L\right)^{1/n}\geq (1-t)\gamma_n(K)^{1/n} + t \gamma_n(L)^{1/n},
\end{equation}
i.e. $\gamma_n$ is $1/n$-concave over the class of symmetric convex bodies.  An example given in \cite{NT13:1} shows that assumption on $K$ and $L$ having some symmetry is necessary. An important progress was made in \cite{KL21} by A. Kolesnikov and G. Livshyts; they showed that the Gaussian measure is $\frac{1}{2n}$-concave on the class of convex bodies containing the origin in their interior \cite{KL21}. Inequality \eqref{e:gamma_gaussian} was finally proven by A. Eskenazis and G. Moschidis in \cite{EM20:3} for symmetric convex bodies. 
Let $\mathcal{M}_n$ be a  class of Borel measures $\mu$ on $\R^n$ with the following properties: 
\begin{equation}
\label{eq:CER}
d\mu=e^{-W(|x|)}, \;\;  W:(0,\infty)\to(-\infty,\infty] \mbox{ is increasing  and } t\mapsto W(e^t) \text{ is convex}.\end{equation}
This class contains every rotational invariant, log-concave measure. D. Cordero-Erausquin and L. Rotem \cite{CR23} extended the result by Eskenazis and Moschidis to every measure $\mu\in\mathcal{M}_n,$ i.e. every Borel measure $\mu\in\mathcal{M}_n$ is $1/n$-concave over the class of symmetric convex bodies.

The study of the interaction of the volume of convex bodies and their Minkowski sums is known as the \textit{Brunn-Minkowski theory}; the Brunn-Minkowski theory is thoroughly detailed in the textbook of R. Schneider \cite{Sch14:book}, and we will make frequent reference to it. Following foundational ideas by G. Livshyts \cite{Liv19} and  E. Milman and L. Rotem \cite{MR14}, we explored in Part I of this work \cite{FLMZ24:1} extensions of other concepts from the Brunn-Minkowski theory to measures, in what we call the \textit{weighted Brunn-Minkowski Theory}. Primarily, we had discussed surface area, mixed surface area, and the \textit{mixed volume} of convex bodies. 

The following facts can be found in \cite{Sch14:book}. Steiner's formula states that the Minkowski sum of the dilates of two compact, convex sets can be expanded as a polynomial of degree $n$: for every $t \geq 0$, convex body $K$ and compact, convex set $L$, both in $\R^n,$ one has
$$\vol_n(K+tL)=\sum_{j=0}^n \binom{n}{j} V(K[n-j],L[j])t^j,$$
with non-negative coefficients $V(K[n-j],L[j])$ called the \textit{mixed volumes} of $(n-j)$ copies of $K$ and $j$ copies of $L$. For $j=1$, one writes $V(K[n-1],L)$. Taking the derivative, one obtains
\begin{equation}
	    V(K[n-1],L)=\frac{1}{n}\lim_{\epsilon\to0^+}\frac{\vol_n(K+\epsilon L)-\vol_n(K)}{\epsilon}.
	    \label{eq:mixed_0}
\end{equation}
The first step in a weighted Brunn-Minkowski theory is to generalize mixed volumes. Since \eqref{eq:mixed_0} has nothing to do with the concavity of the volume, when given an arbitrary Borel measure $\mu,$ and Borel sets $K$ and $L$, the $\mu$-mixed measure of $K$ and $L$ can be defined as
	\begin{equation}
\mu(K;L):=\liminf_{\epsilon\to0^+}\frac{\mu(K+\epsilon L)-\mu(K)}{\epsilon},
	    \label{eq:arb_mixed_0}
	\end{equation}
when the $\liminf$ is finite. Heuristically, if the limit exists, this is precisely the first coefficient in the Taylor series expansion of $\mu(K+tL)$ (in the variable $t$). This terminology was introduced by Livshyts in \cite{Liv19}, and has been used in other works recently, see e.g. \cite{Liv19,Hos21,LRZ22,KL23,FLMZ24:1}. It has appeared previously in many works without being explicitly given the name mixed measures, see e.g. \cite{Naz03,MR14,KM18:1,CNV04,ST74}. For Borel sets $K$ and $L$ containing the origin with finite $\mu$ measure, the limit exists when $\mu$ has continuous density. If $\lambda_n$ denotes the Lebesgue measure, then \eqref{eq:arb_mixed_0} is consistent with mixed volumes up to a factor $n$ i.e. $\lambda_n(K; L)=nV(K[n-1],L)$, thus $\lambda_n(K; K)=n\vol_n(K)$.  In general $\mu(K;K)\neq n\mu(K)$, unlike in the volume case. In fact, if $L$ does not contain the origin, then $\mu(K;L)$ could be negative.

Here and henceforth, in an abuse of notation, when the derivative operations $(d/dt), (\partial/\partial s)$ and $(\partial^2/\partial s\partial t)$ are applied to quantities of the form $\mu(tK)$, $\mu(tK;L)$ and $\mu(A+sB+tC)$, we mean one-sided derivatives from above. By iterating the Steiner polynomial, $\vol_n(K+t_1L_1+t_2L_2)$ also has a polynomial expansion in the variables $t_1$ and $t_2$; the coefficient of $t_1t_2$ in the corresponding polynomial expansion, the mixed volume of $(n-2)$-copies of $K,$ one copy of $L_1,$ and one copy $L_2$, is thus precisely
\begin{equation}
    V(K[n-2],L_1,L_2)=\frac{1}{n(n-1)}\pdv{}{t_1,t_2}\vol_n(K+t_1L_1+t_2L_2)(0,0).
    \label{eq:mixed_volume}
\end{equation}
Notice that $\vol_n(K)=V(K[n-1],K)=V(K[n-2],K[2])$. We introduced in \cite{FLMZ24:1} the weighted analogue of \eqref{eq:mixed_volume}. \begin{defn}
\label{def:mixed}
Let $\mu$ be a Borel measure on $\R^n$. Then, for Borel sets $A,B,C\subset\R^n$ with finite $\mu$-measure, the mixed measure of $(n-2)$ copies of $A$, one copy of $B$ and one copy of $C$ is given by
$$\mu(A;B,C)=\pdv{}{s,t}\mu(A+sB+tC)(0,0),$$
whenever the mixed derivative exists.
\end{defn}
From the Schwarz theorem of multivariate calculus, one has $\mu(A;B,C)=\mu(A;C,B)$, which still applies for mixed partial derivatives that are one-sided. In \cite{FLMZ24:1}, briefly discussed in Appendix A, we recall a formula for $\mu(K;L)$ when $K$ and $L$ are convex bodies and $\mu$ has density. We also showed that $\mu(A;B,C)$ exists if $\mu$ has $C^2$ density, and we obtained an explicit formula when $A$ is smooth (i.e. has $C^2_+$ boundary).

The Brunn-Minkowski inequality implies the following Minkowski's first and second inequalities for the mixed volumes: for compact, convex sets $K$ and $L$ in $\R^n$,
\begin{equation}
    V(K [n-1], L)^{n} \geq \vol_n(K)^{n-1} \vol_n(L),
    \label{eq:first_in}
\end{equation}
and
\begin{equation}
\label{eq:second_in}
V(K[n-1], L)^{2} \geq \vol_n(K) V(K[n-2], L [2]).
\end{equation}
Equality holds in inequality \eqref{eq:first_in} if and only if $K$ and $L$ are homothetic.  Minkowski's second inequality is merely a special case of \textit{Minkowski's quadratic inequality}:  for $A,B,C$ compact, convex sets, one has
\begin{equation}\label{eq:quad_in}V(A[n-2],B,C)^2 - V(A[n-2],B,B)V(A[n-2],C,C) \geq 0.\end{equation}
This, in turn, is a special case of the Aleksandrov-Fenchel inequality (which we do not discuss). We do mention however that despite having existed in the literature for over a century, the full equality conditions to Minkowski's second and quadratic inequalities were only established very recently by R. van Handel and Y. Shenfeld \cite{SH21}.

The concavity of the function in two variables $f(s,t)=\vol_n(A+sB+tC)^{1/n}$ implies the following  ``reverse" Minkowski's quadratic inequality, which originally appeared in the works by W. Fenchel (see \cite{Fen36} and also \cite{Sch14:book}) and further generalized in \cite{FGM03, AFO14, SZ16}:
\begin{align}
&\bigg(-V^2(A[n-1],C)\frac{V(A[n-2],B[2])}{\vol_n(A)}
+  V(A[n-1],B)V(A[n-1],C)\frac{2V(A[n-2],B,C)}{\vol_n(A)} 
\nonumber\\
&\quad\quad\quad\quad\quad\quad- V^2(A[n-1],B)\frac{V(A[n-2],C[2])}{\vol_n(A)}\bigg)
\nonumber\\
&\quad\geq V^2(A[n-2],B,C)-V(A[n-2],B[2])V(A[n-2],C[2]).\label{eq:reverse}
\end{align}
Observe that if one uses Minkowski's quadratic inequality \eqref{eq:quad_in} to remove the $V(A[n-1],B)^2$ and $V(A[n-1],C)^2$ terms in the reverse Minkowski's quadratic inequality \eqref{eq:reverse}, one obtains
\begin{align*}2\frac{V(A[n-2],B,C)V(A[n-1],B)V(A[n-1],C)}{\vol_n(A)} &\geq V^2(A[n-2],B,C)
\\
&+V(A[n-2],B[2])V(A[n-2],C[2]).
\end{align*}
Moreover, by throwing away the extra term $V(A[n-2],B[2])V(A[n-2],C[2]),$ which is non-negative, and re-arranging, we obtain \textit{Fenchel's inequality}:
\begin{equation}2V(A[n-1],B)V(A[n-1],C) \geq \vol_n(A)V(A[n-2],B,C).
\label{eq:fen}
\end{equation}

The main result of Section~\ref{sec:mixed} is Theorem~\ref{t:fen}, which generalizes Fenchel's inequality to the case of $s$-concave measures. When the measure is set to be Lebesgue, one recovers the classical volume case (see the discussion before \eqref{eq:sup_fen}). Moreover, our approach yields a proof of \eqref{eq:fen} \textit{without} using Minkowski's quadratic inequality. 

Our first set of results is generalizing inequalities \eqref{eq:first_in}, \eqref{eq:second_in} and \eqref{eq:reverse} to the setting of measures with density and some type of concavity; this is achieved in Proposition~\ref{prop:first_second} and Proposition~\ref{prop:second} for Minkowski's first and second inequality respectively, and in Theorem~\ref{t:min_quad} for the reverse Minkowski's quadratic inequality. For Minkowski's first inequality, the case of convex bodies and a measure $\mu$ with differentiable density was first done in \cite{FLMZ24:1,Liv19}. As for Minkowski's second inequality for $F$-concave measures, this was established previously in the prequel, \cite{FLMZ24:1}. The version that we obtain here is distinct; of course both versions coincide in the case of the Lebesgue measure. We present here the case of the Gaussian measure using \eqref{e:gamma_gaussian}; the reader can deduce from the result in Proposition~\ref{prop:second} other such inequalities for the Gaussian measure and other measures with concavity.

\begin{cor}
    Let $K$ and $L$ be symmetric convex bodies in $\R^n, n \geq 2$. Then:
    $$\gamma_n(K;L)^2\geq\frac{n}{n-1} \gamma_n(K)\gamma_n(K;L,L).$$
\end{cor}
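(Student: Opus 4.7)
The plan is to apply the $1/n$-concavity of the Gaussian measure on symmetric convex bodies, established by Eskenazis--Moschidis (inequality \eqref{e:gamma_gaussian}), to the one-parameter family $K+tL$, and then extract a differential consequence at $t=0$.

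First, I would observe that for $t\geq 0$, the body $K+tL$ is again a symmetric convex body. Set $h(t)=\gamma_n(K+tL)$ and $g(t)=h(t)^{1/n}$. For $t_1,t_2\geq 0$ and $\lambda\in[0,1]$, the convexity of $K$ gives $\lambda K+(1-\lambda)K=K$, so
$$K+(\lambda t_1+(1-\lambda)t_2)L=\lambda(K+t_1 L)+(1-\lambda)(K+t_2 L).$$
Applying \eqref{e:gamma_gaussian} to the symmetric convex bodies $K+t_1 L$ and $K+t_2 L$ yields $g(\lambda t_1+(1-\lambda)t_2)\geq \lambda g(t_1)+(1-\lambda)g(t_2)$, so $g$ is concave on $[0,\infty)$.

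Next, I would identify the first two right-derivatives of $h$ at $0$ with mixed measures. By \eqref{eq:arb_mixed_0}, $h'(0)=\gamma_n(K;L)$, the limit existing thanks to the smoothness of the Gaussian density as recalled in Appendix A. For the second derivative, set $\phi(s,t)=\gamma_n(K+sL+tL)$; since $sL+tL=(s+t)L$ for $s,t\geq 0$, we have $\phi(s,t)=h(s+t)$, and therefore $\partial_s\partial_t\phi(0,0)=h''(0)$. By Definition~\ref{def:mixed} this equals $\gamma_n(K;L,L)$. Finally, since $g=h^{1/n}$ is concave and $C^2$ near $0$ with $h(0)=\gamma_n(K)>0$, the inequality $g''(0)\leq 0$ expands to
$$g''(0)=\frac{1}{n}\,h(0)^{\frac{1}{n}-2}\left[h(0)h''(0)-\frac{n-1}{n}h'(0)^2\right]\leq 0.$$
Substituting the values of $h(0)$, $h'(0)$, $h''(0)$ and rearranging gives exactly $\gamma_n(K;L)^2\geq \frac{n}{n-1}\gamma_n(K)\gamma_n(K;L,L)$, as desired.

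The main obstacle is to justify that $h$ is $C^2$ near $0$ so that the passage from concavity of $g$ to $g''(0)\leq 0$ is rigorous, and to confirm the identification $h''(0)=\gamma_n(K;L,L)$; both should follow from the smoothness of the Gaussian density together with the integral representations for mixed measures developed in \cite{FLMZ23_1}, but they must be verified. It is worth noting that this argument does \emph{not} use Minkowski's quadratic inequality or any Fenchel-type reverse inequality, only the $1/n$-concavity of $\gamma_n$ on symmetric convex bodies.
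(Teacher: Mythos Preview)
Your proposal is correct and follows essentially the same approach as the paper: the paper derives this corollary from Proposition~\ref{prop:second} (Minkowski's second inequality for $F$-concave measures), whose proof is precisely the argument you give---show that $H(\lambda)=F(\mu(K+\lambda L))$ is concave and use $H''(0)\le 0$ together with \eqref{eq:2BorNot}---specialized to $F(x)=x^{1/n}$ and $\mu=\gamma_n$ via the Eskenazis--Moschidis result. Your regularity concern is handled in the paper exactly as you anticipate, through the $C^2$ density of $\gamma_n$ and the existence results from Appendix~A.
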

In Theorem~\ref{t:min_quad}, we obtain an analogue of Fenchel's reverse Minkowski's quadratic inequality for $F$-concave  measures, which, for Gaussian measure, using \eqref{e:gamma_gaussian}, gives the following.

\begin{cor}
    Let $A,B$ and $C$ be symmetric convex bodies $\R^n.$ Then:
    \begin{align*}
\gamma_n(A; B,B)&\gamma_n(A;C,C)-\left(\frac{n-1}{n}\right)\gamma_n(A)\left(\gamma_n^2(A;B)\gamma_n(A;C,C)+\gamma_n^2(A;C)\gamma_n(A; B,B)\right) 
\\
&\geq \gamma_n^2(A;B,C)-2\left(\frac{n-1}{n}\right)\gamma_n(A)\gamma_n(A;C)\gamma_n(A;B)\gamma_n(A;B,C).
\end{align*}
\end{cor}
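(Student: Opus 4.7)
The plan is to derive this corollary as a direct specialization of Theorem~\ref{t:min_quad} (the reverse Minkowski quadratic inequality for $F$-concave measures) to the Gaussian measure, with $F(x) = x^{1/n}$. The hypothesis that licenses this specialization is the Eskenazis--Moschidis inequality \eqref{e:gamma_gaussian}: the Gaussian measure $\gamma_n$ is $1/n$-concave on the class of symmetric convex bodies. Since $A$, $B$, $C$ are symmetric, so is every Minkowski combination $A + sB + tC$ for $s, t \geq 0$, and consequently
\[
f(s,t) := \gamma_n(A + sB + tC)^{1/n}
\]
is concave on a neighborhood of the origin in $[0,\infty)^2$. Its Hessian at $(0,0)$ is therefore negative semidefinite, so $\det\mathrm{Hess}\, f(0,0) \geq 0$.

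Next I would compute the entries of $\mathrm{Hess}\, f(0,0)$ by the chain rule, identifying the partial derivatives of $\gamma_n(A+sB+tC)$ at the origin with the mixed measures from Definition~\ref{def:mixed}. Setting $p = 1/n$ and abbreviating $\mu = \gamma_n(A)$, $\mu_s = \gamma_n(A;B)$, $\mu_t = \gamma_n(A;C)$, $\mu_{ss} = \gamma_n(A;B,B)$, $\mu_{tt} = \gamma_n(A;C,C)$, $\mu_{st} = \gamma_n(A;B,C)$, each second-order entry of $\mathrm{Hess}\, f(0,0)$ decomposes as a sum of a \emph{rank-$1$} piece proportional to $p(p-1)\mu^{p-2}$ (from differentiating the outer power twice) plus a \emph{pure} piece proportional to $p\,\mu^{p-1}$ (from the second mixed derivative of $\gamma_n$).

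Expanding the resulting $2\times 2$ determinant, the rank-$1$ piece contributes nothing on its own --- via the cancellation $\mu_s^2\mu_t^2 - (\mu_s\mu_t)^2 = 0$ --- so only cross terms survive. After factoring out the positive quantity $p^2\mu^{2p-4}$ and substituting $p - 1 = -(n-1)/n$, the inequality $\det\mathrm{Hess}\, f(0,0) \geq 0$ reduces to
\[
\mu\bigl(\mu_{ss}\mu_{tt} - \mu_{st}^2\bigr) \;\geq\; \tfrac{n-1}{n}\bigl(\mu_s^2\mu_{tt} + \mu_t^2\mu_{ss} - 2\mu_s\mu_t\mu_{st}\bigr).
\]
Translating the $\mu$-notation back into mixed measures and rearranging delivers the displayed corollary. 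The computation is mostly bookkeeping; the only real obstacle is careful sign tracking, since $p - 1 = -(n-1)/n$ is negative --- precisely what produces the minus signs on both sides of the stated inequality. Theorem~\ref{t:min_quad} carries out this calculation in the general $F$-concave setting, and this corollary is then its Gaussian reading via \eqref{e:gamma_gaussian}.
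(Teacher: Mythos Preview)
Your approach is correct and is exactly the paper's: the corollary is obtained by specializing Theorem~\ref{t:min_quad} to $\mu=\gamma_n$ with $F(x)=x^{1/n}$, the $1/n$-concavity being supplied by the Eskenazis--Moschidis inequality~\eqref{e:gamma_gaussian}. Your Hessian computation simply reproduces the proof of Theorem~\ref{t:min_quad} in this special case, and the inequality you arrive at, $\mu(\mu_{ss}\mu_{tt}-\mu_{st}^2)\geq \tfrac{n-1}{n}(\mu_s^2\mu_{tt}+\mu_t^2\mu_{ss}-2\mu_s\mu_t\mu_{st})$, is precisely Corollary~\ref{cor:AFs} with $s=1/n$ (after dividing by $s>0$).
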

In Section~\ref{sec:logsup}, we show that the study of certain properties of mixed measures of three bodies (which are local properties) is related to specific inequalities concerning measures of Minkowski sums of sets (which are global properties). 
\begin{defn}
    \label{def:supmod_bor}
    Let $\mu$ be a Borel measure on $\R^n$, and let $\mathcal{C}$ be a class of compact subsets of $\R^n$. We say $\mu$ is supermodular over $\mathcal{C}$ if, for any $A,B,C\in\mathcal{C}$, one has
    \begin{equation}
        \mu(A+B+C) +\mu(A) \geq \mu(A+B) + \mu(A+C).
        \label{eq:sup_mod}
    \end{equation}
    If the reverse inequality holds, then $\mu$ is said to be submodular over $\mathcal{C}$.
\end{defn}

We note in passing that there also exists a notion of supermodularity for functions as opposed to measures, which has arisen in the the optimization, probability, and convex geometry literature; see, e.g., \cite{Fuj05:book, FKG71, MS00, FMZ24, MMR24}. It follows by definition that the function $(a_1,\ldots, a_r)=a\mapsto\mu(\sum_{i=1}^r a_i K_i)$ being supermodular implies that the measure $\mu$ is supermodular with respect to Minkowski summation. 

The examples of classes $\mathcal{C}$ that we consider include all compact, convex sets, convex bodies, convex bodies containing the origin, symmetric convex bodies and zonoids (see after \eqref{eq:zon} for definition of zonoids). In \cite{FMMZ18}, a subset of the authors together with A. Marsiglietti showed that the Lebesgue measure is supermodular over the class of $n$-dimensional convex bodies. That is, for convex bodies $A,B,$ and $C,$ in $\R^n,$ one has
$$\vol_n(A)+\vol_n(A+B+C)\geq \vol_n(A+B)+\vol_n(A+C).$$

We start Section~\ref{sec:lebclass} by localizing the property of supermodularity in the next proposition.
\begin{prop}[Local form of supermodularity]
\label{p:sup_deriv_equiv_0}
    Let $\mu$ be a Radon measure on $\R^n$. Let $\mathcal{C}$ be a class of convex sets such that the limits in the definitions of $\mu(A;B)$ and $\mu(A;B,C)$ exist for every $A,B,C\in\mathcal{C}$. The following are equivalent:
    \begin{enumerate}
        \item $\mu$ is supermodular over $\mathcal{C}$.
        \item  $\mu(A+C;B)\geq \mu(A;B)$ for every $A,B,C\in\mathcal{C}$.
        \item $\mu(A;B,C) \geq 0$ for every $A,B,C\in\mathcal{C}$.
    \end{enumerate}
\end{prop}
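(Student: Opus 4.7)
The plan is to prove the cyclic chain $(1)\Rightarrow(2)\Rightarrow(3)\Rightarrow(1)$, exploiting that both $\mu(A;B)$ and $\mu(A;B,C)$ are right-derivatives of the function $F(s,t):=\mu(A+sB+tC)$ at $(0,0)$, and that closure of $\mathcal{C}$ under Minkowski sum and dilation lets us iteratively pass between local and global statements.

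For $(1)\Rightarrow(2)$, I fix $A,B,C\in\mathcal{C}$ and $t>0$, note that $tB\in\mathcal{C}$, and apply \eqref{eq:sup_mod} to the triple $(A,tB,C)$; the inequality rearranges as
$$\frac{\mu(A+C+tB)-\mu(A+C)}{t}\geq\frac{\mu(A+tB)-\mu(A)}{t}.$$
Sending $t\to 0^+$, the two quotients converge to $\mu(A+C;B)$ and $\mu(A;B)$ respectively by the standing hypothesis on $\mathcal{C}$, yielding (2). For $(2)\Rightarrow(3)$, Schwarz symmetry of the mixed partial (already noted just after Definition~\ref{def:mixed}) gives
$$\mu(A;B,C)=\lim_{s\to 0^+}\frac{\mu(A+sC;B)-\mu(A;B)}{s},$$
and dilation-closure of $\mathcal{C}$ provides $sC\in\mathcal{C}$; applying (2) with $C$ replaced by $sC$ therefore makes every difference quotient on the right nonnegative, and hence so is the limit.

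The direction $(3)\Rightarrow(1)$ is the longer step and proceeds by iterating a one-variable monotonicity argument. First define $\phi(t):=\mu(A+tC;B)$ on $[0,1]$; at every $t$ one has $A+tC\in\mathcal{C}$, and the hypothesis provides $\phi_+'(t)=\mu(A+tC;B,C)\geq 0$ by (3). Once $\phi$ is known to be continuous on $[0,1]$, the classical criterion that a continuous function with everywhere nonnegative right-derivative is nondecreasing yields $\phi(1)\geq\phi(0)$, which is exactly (2). Then I apply the same idea to
$$\psi(t):=\mu(A+tB+C)-\mu(A+tB),$$
whose right-derivative $\mu(A+tB+C;B)-\mu(A+tB;B)$ is nonnegative by the (2) just established; continuity of $\psi$ then gives $\psi(1)\geq\psi(0)$, which is precisely the supermodularity inequality (1).

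The main obstacle is the continuity step used to upgrade pointwise nonnegativity of a one-sided derivative to monotonicity on $[0,1]$. This is automatic when $\mu$ has continuous density (so that $F$ is $C^1$ and the fundamental theorem of calculus applies, giving the supermodularity directly as $\int_0^1\int_0^1\partial_s\partial_t F(s,t)\,ds\,dt\geq 0$), but in general requires a separate verification from the Radon regularity of $\mu$ together with continuity of Minkowski summation in the Hausdorff metric.
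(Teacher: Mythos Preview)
Your argument is correct and follows the same strategy as the paper: the implications $(1)\Rightarrow(2)$ and $(2)\Rightarrow(3)$ are identical to the paper's, and for $(3)\Rightarrow(1)$ the paper writes the single identity
\[
g(1,1)-g(1,0)-g(0,1)+g(0,0)=\int_0^1\!\int_0^1\frac{\partial^2 g}{\partial s\,\partial t}(s,t)\,ds\,dt\geq 0,
\]
which is exactly your two one-variable monotonicity steps fused into one. Your explicit flagging of the continuity/regularity needed to pass from nonnegative right-derivatives to monotonicity (equivalently, to justify the fundamental theorem of calculus in the display above) is apt: the paper invokes the double-integral identity without further comment, so the technical gap you identify is present in both arguments and is not a defect of your approach specifically.
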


The main result of Section~\ref{sec:lebclass} is the following theorem, which shows that constant multiples of the Lebesgue measure are the only Radon measures supermodular over the class of compact, convex sets.
\begin{reptheorem}{t:Radon}
    Let $n\geq 1$. Let $\mu$ be a Radon measure that is supermodular (i.e. satisfies \eqref{eq:sup_mod}) over the class of all convex bodies in $\R^n$. Then $\mu$ is a multiple of the Lebesgue measure.
\end{reptheorem}
\noindent In the proof of Theorem~\ref{t:Radon}, we use  (2.) from Proposition~\ref{p:sup_deriv_equiv_0} and work at the level of mixed measures of two bodies. However, this theorem and (3.) from Proposition~\ref{p:sup_deriv_equiv_0} implies the following result: \textit{let $\mu$ be a Radon measure that is not a multiple of the Lebesgue measure, such that $\mu(A;B,C)$ exists for every compact, convex sets $A,B$ and $C$. Then, there exists $A,B$ and $C$ such that $\mu(A;B,C) <0$.} This elaborates on a result shown in \cite{FLMZ24:1}. Indeed, we had shown that for every $R>0$ and $\theta\in\s^{n-1}$ one has
$$\gamma_n(RB_2^n;[-\theta,\theta],[-\theta,\theta]) < 0$$
and for every compact, convex $B,C\subset \R^2,$ there exists $R>0$ such that
$$\gamma_2(RB_2^2;B,C) <0.$$
Here, $B_2^n$ is the unit Euclidean ball in $\R^n$. We also set for later $\s^{n-1}=\partial B_2^{n}$, the unit sphere. Under this notation, we have that $\mu^+(\partial K):=\mu(K;B_2^n)$, where $\mu^+$ is the Minkowski content of $K$ with respect to $\mu$ defined in \eqref{eq_bd} below.

 It was proved by G. Saracco and G. Stefani in  \cite{SS24:1} that if
     $\mu$ is a Borel measure on $\R^n$ {\em with locally integrable density} and  for any $K$ and $L$ convex sets such that $K\subset L$ we have
    \[
    \mu^+(\partial K) \leq \mu^+(\partial L),
    \]
    then, $\mu$ is a multiple of the Lebesgue measure.
    
    Motivated by Theorem~\ref{t:Radon} and the result by Saracco and Stefani, we consider in Section~\ref{sec:weaksup} the question of characterizing the Lebesque measure via inequalities for the surface area of the Minkowski sum. We start with the following theorem:
\begin{reptheorem}{t:surarecom}
For $n\geq 1$, let $\mu$ be a Radon measure on $\R^n$ such that, for every convex body $K$ and compact, convex set $L$, 
    \begin{equation}\mu^+(\partial (K+L)) \geq \mu^+(\partial K).
    \label{eq:sursum}
    \end{equation}
    Then $\mu$ is a multiple of the Lebesgue measure.
\end{reptheorem}
Let us remark that the convex bodies $K$ and $L$ constructed by Saracco and Stefani, whose structure forced the density of $\mu$ to be constant, were not of the forms $L=K+C$, where $C$ is a compact, convex set; thus, the result in \cite{SS24:1} does not imply Theorem~\ref{t:surarecom}. We remark also that as a consequence of Proposition~\ref{p:sup_deriv_equiv_0}, \eqref{eq:sursum} is equivalent to supermodularity in the case when the compact, convex set $B$ in \eqref{eq:sup_mod} is taken from the class of dilates of the Euclidean ball, i.e. $\mathcal{B}=\{rB_2^n\}_{r>0}$.

In the remainder of Section~\ref{sec:weaksup}, we consider alternatives to this property by restricting the class of bodies under consideration. We first establish that the characterization of the Lebesgue measure like in Theorem~\ref{t:surarecom} does not hold in $\R^2$ when restricting the convex and compact set $L$ to being symmetric; we will not require the point of symmetry to be the origin. A set $L$ is said to be symmetric about a point $x$ if $L-x$ is symmetric.

\begin{reptheorem}{t:contr}
        Let $K$ be a convex body in $\R^2$ and let $\mu$ be the measure with density $|(x_1,x_2)|^2=x_1^2+x_2^2$. Then, for every compact, convex set $L\subset \R^2$ that contains the origin and is symmetric about a point:
        $$\mu^+(\partial(K+L)) \geq \mu^+(\partial K).$$
    \end{reptheorem}

We then study super/sub-modularity for the Gaussian measure.
\begin{reptheorem}{p:mod_log}
The measure $\gamma_1$ is submodular on $\R$ on the class of symmetric convex bodies. In other words, for symmetric intervals $A, B, C\subset\R,$ one has
$$\gamma_1(A+B+C) + \gamma_1(A) \leq \gamma_1(A+B) + \gamma_1(A+C).$$
For $n\geq 2,$ $\gamma_n$ is neither submodular nor supermodular over the class of symmetric, convex sets.
\end{reptheorem}

We then conclude Section~\ref{sec:logsub} by a brief discussion on the property of log-submodularity as a potential future application of our methods. Just like in the volume case, we arrive at the question of local log-submodularity via a differentiation schema as a potential application of our results; in both cases we see that local log-submodularity is strictly stronger than Fenchel's inequality.

This paper is organized as follows. Section~\ref{sec:mixed} is dedicated to proving inequalities for mixed measures that are equivalent to the aforementioned inequalities for mixed volumes, such as Minkowski's first and second inequalities. In Section~\ref{sec:logsup}, we apply mixed measures to study supermodularity and log-submodularity of rotational invariant log-concave measures.

\begin{rem}[Note added in proof]
A seminal result of Pukhlikov and Khovanskii \cite{PK92:1} implies that, for the Borel measure $\nu_n$ on $\mathbb{R}^n$ with density $|x|^2$ and compact, convex sets $K_i\subset \R^n$, the function $$[0,\infty)^r\ni \lambda \longmapsto \nu_n\left(\sum_{i=1}^r \lambda_i K_i\right)$$ is a polynomial of degree at most $n+2$; note that $\nu_2$ corresponds to the measure $\mu$ in our Theorem~\ref{t:contr}. Subsequently, Alesker \cite{Ale99} established several results for the measure $\nu_n$. He showed that if a compact, convex set $K$ contains the origin in its interior, the polynomial $t \mapsto \nu_n(K + t B_2^n)$ has non-negative coefficients and that for origin-symmetric $K_i$, the quartic polynomial $\nu_2(\sum_{i=1}^r \lambda_i K_i)$ also has non-negative coefficients. Finally, Alesker remarked, without proof, the monotonicity, for $0 \in K \subset L$ compact, convex sets, $$\nu_n^+(\partial K) \leq \nu_n^+(\partial L).$$ Clearly, this observation overlaps with our Theorem~\ref{t:contr}; however, neither implies the other. Indeed, in our result, $K$ need not contain the origin, whereas for those $K$ that do contain the origin, his result allows $L$ to be any compact, convex set containing $K$, while ours requires $L$ to be a superset that has $K$ as a Minkowski summand.
\end{rem}

\noindent {\bf Acknowledgments.}
We are grateful to Fedor Nazarov for providing us with a proof of the inequality in Appendix B. We thank Fabian Mu{\ss}nig for the reference used in Remark~\ref{r:fabian}. We thank Dominique Malicet for helpful comments, which lead to Lemma~\ref{l:points} (shortening the proof of Theorem~\ref{t:Radon} drastically) and Theorem~\ref{thm:strict_2}. We thank Alexandros Eskenazis for the insightful question which led to Proposition~\ref{p:zon_ori}. We thank Semyon Alesker for bringing his work (discussed in the preceding note) to our attention. Finally, we thank the anonymous referee, whose thorough remarks greatly enhanced the presentation of this paper.

\section{Inequalities for mixed measures}
\label{sec:mixed}
\noindent Throughout this section, all sets will be subsets of $\R^n$ unless otherwise noted. 
\subsection{Minkowski's inequalities}

To obtain analogues of Minkowski's first and second inequalities for mixed measures, we consider measures with density that have some concavity (like in the volume case), the so-called $F$-concave measures given by \eqref{eq:fcon}. We emphasize that, if $F$ is increasing, like $x^s,$ $s>0,$ then $F\circ \mu$ is a concave function over $\mathcal{C}.$ Likewise, if $F$ is decreasing, like $x^s, s<0,$ then $F\circ \mu$ is a convex function over $\mathcal{C}.$ Additionally, it is not hard to show that, if there is equality in \eqref{eq:fcon} for a single $\lambda\in (0,1),$ then there is equality for every $\lambda \in (0,1).$

A natural question is how $\mu(K)$ and $\mu(K;K)$ are related. For a convex set $K$ containing the origin, notice that, for $t\in [0,1]$, \eqref{eq:arb_mixed_0} yields $\mu(tK;K)=\odv{}{t}\mu(tK),$ where the (one-sided) derivative exists almost everywhere since the function $\mu(tK)$ is monotonic in $t.$ Consequently, integrating yields
\begin{equation}
    \mu(K)=\int_0^1\mu(tK;K)dt.
    \label{eq:measure_mixed_relate_0}
\end{equation}
\noindent We next show Minkowski's first inequality for $F$-concave measures first shown in \cite{Liv19}; the inequality does not require convexity of the sets, only that the given Borel measure has some concavity over a class of Borel sets.

\begin{prop}[Minkowski's first inequality for $F$-concave measures]
\label{prop:first_second}
Let $\mu$ be a Borel measure on $\R^n$ that is $F$-concave on a class of Borel sets $\mathcal{C}.$ Let $K,L\in\mathcal{C}$ with the property that $\mu(K;L)$ and $\mu(K;K)$ exist and are finite, $F$ is differentiable at $x=\mu(K)$, and $F^\prime(\mu(K))\neq 0$. Then:
$$
\mu(K; L) \geq \mu(K; K)+\frac{F(\mu(L))-F(\mu(K))}{F^{\prime}(\mu(K))}.
$$
\end{prop}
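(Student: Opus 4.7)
The plan is to exploit the $F$-concavity inequality along the interpolation $t\mapsto(1-t)K+tL$ and identify the right derivative at $t=0$ with $\mu(K;L)-\mu(K;K)$. Without loss of generality I take $F$ to be strictly increasing (otherwise the argument applies to $-F$, which reverses the sign of $F'(\mu(K))$ consistently with the claimed inequality). Applying $F$ to both sides of the $F$-concavity inequality \eqref{eq:fcon} with $A,B$ replaced by $K,L$ gives, for every $t\in[0,1]$,
$$F(\mu((1-t)K+tL))\geq (1-t)F(\mu(K))+tF(\mu(L)),$$
with equality at $t=0$. Subtracting $F(\mu(K))$, dividing by $t>0$, and taking $\liminf$ as $t\to 0^+$ yields
$$\liminf_{t\to 0^+}\frac{F(\mu((1-t)K+tL))-F(\mu(K))}{t}\;\geq\; F(\mu(L))-F(\mu(K)).$$

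Next I would apply the chain rule. Since $F$ is differentiable at $\mu(K)$, and $\mu((1-t)K+tL)\to\mu(K)$ as $t\to 0^+$ (a natural continuity property that is easy to justify from the monotonicity of the measure along the segment), the left-hand side above equals $F'(\mu(K))$ times $\liminf_{t\to 0^+}[\mu((1-t)K+tL)-\mu(K)]/t$. The core computational step is to show that this last $\liminf$ equals $\mu(K;L)-\mu(K;K)$. To do this I would split
$$\mu((1-t)K+tL)-\mu(K)=\bigl[\mu((1-t)K+tL)-\mu((1-t)K)\bigr]+\bigl[\mu((1-t)K)-\mu(K)\bigr].$$
Writing $K_t=(1-t)K$, the first bracket is $\mu(K_t+tL)-\mu(K_t)$, which divided by $t$ should tend to $\mu(K;L)$ from the definition \eqref{eq:arb_mixed_0} (with the base set perturbed infinitesimally from $K$ to $K_t$). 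For the second bracket, the convexity of $K$ gives the Minkowski-additivity identity $K=K_t+\tfrac{t}{1-t}K_t$, so $\mu(K)-\mu(K_t)=\mu(K_t+\tfrac{t}{1-t}K_t)-\mu(K_t)$, which divided by $t$ should tend to $\mu(K;K)$ by the same token. Combining the two contributions gives $\mu(K;L)-\mu(K;K)$, and dividing by the (positive) $F'(\mu(K))$ and rearranging produces the stated inequality.

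The main obstacle is precisely the passage to the limit in the splitting step: the defining $\liminf$ for $\mu(K_t;L)$ (respectively $\mu(K_t;K_t)$) must be controlled uniformly in $t$ so that its value at the moving base $K_t$ contributes $\mu(K;L)$ (respectively $\mu(K;K)$) in the limit. Under the hypothesis that $\mu(K;L)$ and $\mu(K;K)$ merely exist, some extra ingredient is needed, and I expect the paper to invoke continuity of the mixed measure in the base set (which is automatic when, say, $\mu$ has a continuous density and $K,L$ are convex bodies, as recalled in Appendix~A of \cite{FLMZ23_1}), or to use a monotonicity/sandwich argument that turns the $\liminf$ into a clean bound without requiring equality. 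Everything else in the proof is formal manipulation of the $F$-concavity inequality and the chain rule, so the analytic content really concentrates in this single limit interchange.
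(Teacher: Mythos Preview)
Your overall strategy---differentiate the $F$-concavity inequality along an interpolation and read off the inequality at $t=0$---matches the paper's. The crucial difference is the choice of interpolation. You interpolate between $K$ and $L$ and then must identify $\liminf_{t\to0^+}[\mu((1-t)K+tL)-\mu(K)]/t$ with $\mu(K;L)-\mu(K;K)$ via your splitting, which (as you correctly recognise) introduces a moving base set $K_t=(1-t)K$ and requires an extra continuity or monotonicity ingredient that you hope the paper supplies.

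The paper does neither. It instead writes
\[
K+\epsilon L=(1-\epsilon)\,\frac{K}{1-\epsilon}+\epsilon L
\]
and applies $F$-concavity to the pair $\bigl(\tfrac{K}{1-\epsilon},L\bigr)$. Now the left-hand side is literally $\mu(K+\epsilon L)-\mu(K)$, so dividing by $\epsilon$ and taking the $\liminf$ yields $\mu(K;L)$ straight from the definition, with base $K$ fixed throughout. The only leftover term is $\liminf_{\epsilon\to0^+}[\mu(\tfrac{K}{1-\epsilon})-\mu(K)]/\epsilon$, which is again a limit with fixed base $K$ (for convex $K$ one has $\tfrac{K}{1-\epsilon}=K+\tfrac{\epsilon}{1-\epsilon}K$) and equals $\mu(K;K)$ after a trivial reparametrization. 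Thus the obstacle you isolate simply never arises: no continuity of mixed measures in the base set, no sandwich argument, nothing beyond the hypotheses already stated. Your splitting is the natural first attempt, but the dilation trick is the device that makes the argument go through cleanly under the bare assumptions of the proposition.
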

\begin{proof}
    As said before, this was first shown in \cite{Liv19}; the proof is adapting a classical proof of Minkowski's first inequality for volume, and so we reproduce it for completeness. For $\epsilon \in (0,1)$, observe that
    \begin{align*}\mu(K+\epsilon L) - \mu(K) &= \mu\left((1-\epsilon)\frac{K}{1-\epsilon}+\epsilon L\right) - \mu(K) 
    \\
    &\geq F^{-1}\left((1-\epsilon)F\left(\mu\left(\frac{K}{1-\epsilon}\right)\right)+\epsilon F(\mu(L))\right) - \mu(K).
    \end{align*}
    Dividing by $\epsilon$ and taking $\liminf$, one obtains (from the chain rule)
    $$\mu(K;L) \geq \frac{1}{F^\prime (\mu(K))}\left(F(\mu(L))-F(\mu(K))\right) + \liminf_{\epsilon \to 0^+}\frac{\mu\left(\frac{K}{1-\epsilon}\right)-\mu(K)}{\epsilon},$$
    where we used that $\odv{F^{-1}(t)}{t}\big|_{t=a}=F^\prime(F^{-1}(a))^{-1}$ with $a=F(\mu(K)).$ Recognizing that the last term is $\mu(K;K)$ completes the proof in this case.

 \end{proof}
\begin{rem}
\label{rem:when_zero}
    In the case that $\mathcal{C}$ is a collection of compact, convex sets, we analyze now the case when $F^\prime(\mu(K))=0$. There are few possible outcomes. Firstly, consider the function $H(t)=F(\mu((1-t)K+tL)).$ Since $F$ is strictly increasing (decreasing), $H$ is concave (convex), all one-sided derivatives exist and therefore $$H^\prime(0)=F^\prime(\mu(K))\lim_{t\to 0^+}\frac{\mu((1-t)K+tL)-\mu(K)}{t}=F^\prime(\mu(K))\left(\mu(K;L)-\mu(K;K)\right)=0.$$
    Here we used the formula \cite[Equation (28)]{FLMZ24:1} which holds for convex sets. Since $H$ is concave (convex), we deduce that $H^\prime(t) \leq 0$ ($H^\prime(t) \geq 0$) for all $t$ (here, the derivatives are one-sided from above). Thus, $H$ is decreasing (increasing). We deduce that $F(\mu(K))=H(0) \geq H(1) = F(\mu(L))$ ($F(\mu(K))=H(0) \leq H(1) = F(\mu(L))$). In either case of monotonicity, we then obtain that $\mu(K)\geq \mu(L)$ for all $L\in\mathcal{C}$.

    We consider the first natural case, when there exists a convex body $M$ containing the origin in $\mathcal{C}$. Then, since $\mu$ is a Borel measure, $t\mapsto \mu(K+tM)$ is non-decreasing in $t$. But, we have already shown that $\mu(K) \geq \mu(L)$ for all $L$, including $L=K+tM$ (remember, $\mathcal{C}$ is closed under sums and dilations). Thus, we have either that $K$ contains the support of $\mu$ or reached a contradiction, and so $F^\prime(\mu(K))=0$ can never happen under usual circumstances. In this instance, the inequality should be interpreted as $\mu(K;M) = 0$.

    We consider our next case, when such an $M$ may not exist and there does not exist an $L$ such that $\mu(K)=\mu(L)$. Then, from  $\mu(K) > \mu(K+tL)$ for every $L \in {\mathcal C}$, we have $\mu(K;L)\le 0$. In particular, we deduce that $\mu(K;K)\le 0$. We claim in this instance that
    $$\frac{F(\mu(L))-F(\mu(K))}{F^{\prime}(\mu(K))}
    $$ 
    should be interpreted as $-\infty$. Indeed, if $F$ is increasing, then for every $L\in\mathcal{C}$, $F(\mu(L))-F(\mu(K)) < 0$ and, since $F^\prime$ is non-negative when it exists, $1/F^\prime(\mu(K))$ should be interpreted as $+\infty$. Similarly, if $F$ is decreasing, then $F(\mu(L))-F(\mu(K)) > 0$ and, since $F^\prime$ is non-positive when it exists, $1/F^\prime(\mu(K))$ should be interpreted as $-\infty$.

    Our final case is when such an $M$ may not exist and there does exist an $L$ such that $\mu(K)=\mu(L)$. Then, $H(0)=H(1)$ and $H(0)\geq H(t)$ for $t\in (0,1)$. Thus, $H(t)$ is constant. Hence, $\mu((1-t)K + tL)$ is constant, which means, taking the derivative again using \cite[Equation 28]{FLMZ24:1}, that $\mu(K;K)=\mu(K;L)$. This is how the inequality should be understood in this very specific situation.
\end{rem}

\noindent An important consequence of Proposition~\ref{prop:first_second} is that, if additionally one has $\mu(K)=\mu(L),$ then $\mu(K;L)\geq \mu(K;K).$ One can verify that the Minkowski's first inequality becomes \eqref{eq:second_in} when $\mu=\lambda_n.$ Indeed, if $\mu$ is $\alpha$-homogeneous, $\alpha>0$ (that is, $\mu(tK)=t^{\alpha}\mu(K)$),
then one can readily verify that $\mu(tK,L)=t^{\alpha-1}\mu(K;L).$ Consequently, one deduces for such $\mu$ via \eqref{eq:measure_mixed_relate_0} that $\mu(K)=\frac{1}{\alpha}\mu(K;K).$ Thus, Minkowski's first inequality from Proposition~\ref{prop:first_second} for an $\alpha$-homogeneous, $1/\alpha$-concave measure, reads as
$$\left[\frac{1}{\alpha}\mu(K;L)\right]^{\alpha}\geq \mu(K)^{\alpha-1}\mu(L),$$
which was first shown by Milman and Rotem \cite{MR14}. 

Working under the minor assumption that the class $\mathcal{C}$ in Proposition~\ref{prop:first_second} is a class of convex sets, we can obtain Minkowski's second inequality. This Minkowski's second inequality will actually be distinct from the version obtained in the prequel, \cite{FLMZ24:1}. We also note, by writing out the limit definition of the derivative, that for a Borel measure $\mu$ on $\R^n$ and Borel sets $A,B$ and $C$ in $\R^n$ such that $\mu(A;B,C)$ from Definition~\ref{def:mixed} exists, one has
\begin{equation}
    \mu(A;B,C)=\lim_{s\to 0^+}\frac{\mu(A+sB;C)-\mu(A;C)}{s}.
    \label{eq:limit_def_alt}
\end{equation}
In particular, one sees that if $A$ is convex
$$\mu(tA;A,C)=\odv{\mu(tA;C)}{t}.$$
We therefore deduce that
\begin{equation}
    \mu(A;C)=\int_0^1\mu(tA;A,C)dt.
    \label{eq:int_formula_double_mixed}
\end{equation}

\noindent Notice that if $A$ and $B$ are convex, then
\begin{align}
    \mu(A;B,B) &= \pdv{}{s,t}\mu(A+sB+tB)(0,0) \nonumber \\&=\pdv{}{s,t}\mu(A+(s+t)B)(0,0)=\odv[2]{}{s}\mu(A+sB)\big|_{s=0}. \label{eq:2BorNot}
\end{align}
We now state Minkowski's second inequality for $F$-concave measures. 

\begin{prop}[Minkowski's second inequality for $F$-concave measures]
\label{prop:second}
    Let $\mu$ be a Borel measure on $\R^n$ that is $F$-concave on a class of convex sets $\mathcal{C}$, with the additional property that, for every $K,L\in\mathcal{C}$, $\mu(K;L),$ $\mu(K;K)$ and $\mu(K;L,L)$ exist and are finite, $F$ is twice differentiable at $x=\mu(K)$, and $F^\prime(\mu(K))\neq0$. Then:
    \begin{equation}-\frac{F^{\prime\prime}(\mu(K))}{F^\prime(\mu(K))}\mu^2(K;L) \geq \mu(K;L,L).
    \label{eq:min_second_F}
    \end{equation}
\end{prop}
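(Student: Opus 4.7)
The plan is to reduce to a one-variable concavity/convexity statement and then read off the inequality from the sign of a second derivative at $t=0$. Consider
\[
g(t) := F(\mu(K+tL)), \qquad t \geq 0.
\]
Since $\mathcal{C}$ is closed under Minkowski sums and dilations, for any $a,b \geq 0$ the bodies $K+aL$ and $K+bL$ belong to $\mathcal{C}$, and the identity $\lambda(K+aL)+(1-\lambda)(K+bL) = K+(\lambda a + (1-\lambda)b)L$ together with \eqref{eq:fcon} applied to this pair yields, after applying $F$ to both sides, concavity of $g$ on $[0,\infty)$ when $F$ is increasing and convexity of $g$ when $F$ is decreasing.

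The next step is to expand $g$ to second order at $t=0$ via the chain rule. By \eqref{eq:2BorNot}, the function $h(t) := \mu(K+tL)$ has first and second one-sided derivatives at $t=0$ equal to $\mu(K;L)$ and $\mu(K;L,L)$, respectively; combined with the assumed twice differentiability of $F$ at $\mu(K)$, this gives
\[
g''(0) = F''(\mu(K))\,\mu(K;L)^2 + F'(\mu(K))\,\mu(K;L,L).
\]
Concavity (respectively convexity) of $g$ then forces $g''(0) \leq 0$ (resp.\ $g''(0)\geq 0$). In either case, dividing by $F'(\mu(K))$, whose sign flips the inequality precisely when $F$ is decreasing, yields
\[
\frac{F''(\mu(K))}{F'(\mu(K))}\,\mu(K;L)^2 + \mu(K;L,L) \leq 0,
\]
which is exactly the desired bound \eqref{eq:min_second_F} after rearrangement.

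The main technical point to navigate will be the validity of a second-order Taylor expansion for $h$ at $t=0$: mere existence of $\mu(K;L)$ and $\mu(K;L,L)$ as one-sided limits does not immediately give $h(t) = \mu(K) + t\mu(K;L) + \tfrac{t^2}{2}\mu(K;L,L) + o(t^2)$. Here the hypothesis that these mixed measures exist for \emph{every} pair in $\mathcal{C}$, together with closure of $\mathcal{C}$ under Minkowski sums and dilations, supplies differentiability of $h$ along the entire half-line (since $K+tL \in \mathcal{C}$ for all $t \geq 0$); the remaining regularity needed to integrate $h''$ into a second-order expansion at $0$ should follow from the integral representations for $\mu(A;B)$ and $\mu(A;B,C)$ recalled in Appendix A. If one wishes to sidestep this regularity issue, an equivalent route is to apply the chord inequality $g(\lambda \epsilon) \geq \lambda g(\epsilon) + (1-\lambda) g(0)$ coming directly from $F$-concavity, expand both sides to order $\epsilon^2$, and extract the sign of $g''(0)$ by letting $\epsilon \to 0^+$.
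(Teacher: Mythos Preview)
Your proposal is correct and follows essentially the same approach as the paper: both define the one-variable function $g(t)=F(\mu(K+tL))$, establish its concavity (resp.\ convexity) from $F$-concavity via the identity $(1-\lambda)(K+aL)+\lambda(K+bL)=K+((1-\lambda)a+\lambda b)L$, compute $g''(0)$ by the chain rule using \eqref{eq:2BorNot}, and read off the inequality from the sign of $g''(0)$. Your added discussion of the second-order regularity issue is in fact more careful than the paper's presentation, which simply asserts that one may take another derivative.
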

\begin{proof}
    Consider the function $H(\lambda)=F(\mu(K+\lambda L))$. If $F$ is increasing, then, by hypothesis, this function is concave in $\lambda$. Indeed, write
    \begin{align*}
        F\circ \mu(K+[(1-\lambda)x+\lambda y] L) &= F\circ \mu((1-\lambda)[K+xL] + \lambda [K+yL])
        \\
        &\geq (1-\lambda) F\circ \mu(K+xL) + \lambda F\circ \mu(K+yL).
    \end{align*}
     From the chain rule, one has that
    $$H^\prime(\lambda)=F^{\prime}(\mu(K+\lambda L))\odv{}{t}\mu(K+t L)\big |_{t=\lambda}.$$
     The result then follows from taking another derivative and using \eqref{eq:2BorNot} in conjunction with the fact that $H^{\prime\prime}(0) \leq 0$. The result for when $F$ is decreasing is similar.
\end{proof}
\noindent 
\begin{rem}
    If $F^{\prime}(\mu(K))=0$, then Proposition~\ref{prop:second} should be understood as $F^{\prime\prime}(\mu(K)) \mu^2(K;L) \leq 0$ if $F$ is increasing and $F^{\prime\prime}(\mu(K)) \mu^2(K;L) \geq 0$ if $F$ is decreasing. 
\end{rem}

We isolate the cases when $\mu$ is $s$-concave; recall that $s=0$ is log-concavity.
\begin{prop}[Minkowski's second inequality for $s$-concave measures]
\label{prop:second_s}
    Let $\mu$ be a Borel measure on $\R^n$ that is $s$-concave, $s\in (-\infty,1),$ on a class of convex sets $\mathcal{C}$, with the additional property that $\mu(K;L),$ $\mu(K;K)$ and $\mu(K;L,L)$ exist and are finite for every $K,L\in\mathcal{C}$. Then
    \begin{equation}\mu(K)\mu(K;L,L) \leq (1-s)\mu^2(K;L).
    \label{eq:min_second}
    \end{equation} In particular, if $\mu$ is log-concave, one has $\mu(K)\mu(K;L,L) \leq \mu^2(K;L).$
\end{prop}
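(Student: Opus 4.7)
The plan is to derive this as a direct specialization of Proposition~\ref{prop:second}, splitting into the two standard cases $s \ne 0$ and $s = 0$ because these correspond to different choices of $F$ in the definition \eqref{eq:fcon} of $F$-concavity.

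First I would treat the case $s \in (-\infty, 1) \setminus \{0\}$ by taking $F(x) = x^s$. Since $s < 1$, one checks directly that $F$ realizes $s$-concavity in the sense of \eqref{eq:fcon} (with $F$ increasing when $s > 0$ and decreasing when $s < 0$), and $F$ is smooth on $(0,\infty)$ with $F'(x) = s x^{s-1} \ne 0$ at $x = \mu(K) > 0$. A short computation gives
\[
-\frac{F''(\mu(K))}{F'(\mu(K))} \;=\; -\frac{s(s-1)\mu(K)^{s-2}}{s\,\mu(K)^{s-1}} \;=\; \frac{1-s}{\mu(K)}.
\]
Plugging into the conclusion of Proposition~\ref{prop:second} yields
\[
\frac{1-s}{\mu(K)}\,\mu^2(K;L) \;\geq\; \mu(K;L,L),
\]
which is \eqref{eq:min_second} after multiplying by $\mu(K) > 0$.

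Next I would handle the log-concave case $s = 0$ by taking instead $F(x) = \ln x$, which is the standard choice making \eqref{eq:fcon} read $\mu((1-t)K+tL) \ge \mu(K)^{1-t}\mu(L)^t$. Here $F'(x) = 1/x$ and $F''(x) = -1/x^2$, hence $-F''(\mu(K))/F'(\mu(K)) = 1/\mu(K)$. Proposition~\ref{prop:second} then gives $\mu(K;L,L) \le \mu^2(K;L)/\mu(K)$, i.e.\ $\mu(K)\mu(K;L,L) \le \mu^2(K;L)$, which matches \eqref{eq:min_second} at $s = 0$ and is the claim stated as ``in particular''.

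There is no real obstacle: all hypotheses of Proposition~\ref{prop:second} (existence of the mixed quantities, twice differentiability of $F$, non-vanishing of $F'$ at $\mu(K)$) are immediate from the assumptions here, so the proof reduces to the algebraic identity for $-F''/F'$ in each of the two cases. The only point worth flagging is the sign bookkeeping when $s < 0$, where $F(x) = x^s$ is decreasing; since Proposition~\ref{prop:second} was proven in both the increasing and decreasing regimes, the same conclusion applies with no change of sign.
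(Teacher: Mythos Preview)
Your proposal is correct and matches the paper's approach exactly: the paper presents Proposition~\ref{prop:second_s} as an immediate specialization of Proposition~\ref{prop:second} (introduced only by ``We isolate the cases when $\mu$ is $s$-concave''), and your argument supplies precisely the routine computation of $-F''/F'$ for $F(x)=x^s$ and $F(x)=\ln x$ that the paper leaves implicit. The only minor point not made explicit in either place is that when $\mu(K)=0$ the inequality \eqref{eq:min_second} is trivial (left side zero, right side nonnegative since $s<1$), so the assumption $\mu(K)>0$ needed to invoke Proposition~\ref{prop:second} is harmless.
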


\noindent An interesting example of Proposition~\ref{prop:second_s} is  $s=1/n$, $\mathcal{C}$ is all symmetric convex bodies, and $\mu\in\mathcal{M}_n$, the class of measures defined in \eqref{eq:CER} (e.g. $\gamma_n$).

Observe that, for $\mu(A;B,C)$ when $\mu$ is $\alpha$-homogeneous, one has $\mu(tA;B,C)=t^{\alpha-2}\mu(A;B,C).$ 
Indeed,
\begin{align*}
\mu(tA;B,C)&=\lim_{\epsilon\to 0^+}\frac{\mu(tA+\epsilon B;C)-\mu(tA;C)}{\epsilon}=t^{\alpha-1}\lim_{\epsilon\to 0^+}\frac{\mu(A+\epsilon t^{-1}B;C)-\mu(A;C)}{\epsilon}
\\
&=t^{\alpha-2}\lim_{\epsilon\to 0^+}\frac{\mu(A+\epsilon t^{-1}B;C)-\mu(A;C)}{\epsilon t^{-1}}=t^{\alpha-2}\mu(A;B,C).
\end{align*}
Inserting into \eqref{eq:int_formula_double_mixed}, we obtain for an $\alpha$-homogeneous measure, $\alpha\in(0,1)\cup(1,\infty),$ that
\begin{equation}
    \mu(A;C)=\frac{1}{\alpha-1}\mu(A;A,C).
    \label{eq:dobule_homogeneous}
\end{equation}
Combining this with $\mu(A)=\frac{1}{\alpha}\mu(A;A),$ we obtain that in this instance one has
$$\mu(A;A,A)=\alpha(\alpha-1)\mu(A).$$

\subsection{Fenchel-type inequalities}
We start this subsection by showing the measure case of \eqref{eq:reverse}.
\begin{thm}[Reverse Minkowski's quadratic inequality for mixed measures]
\label{t:min_quad}
Let $\mu$ be a Borel measure on $\R^n$ that is $F$-concave on a class of convex sets $\mathcal{C}$. Then, for $A,B,C\in\mathcal{C}$ such that $F(x)$ is twice differentiable at $x=\mu(A)$,:
\begin{align*}
F^{\prime}(\mu(A))^2\mu(A; B,B)&\mu(A;C,C)+F^{\prime}(\mu(A))F^{\prime\prime}\left(\mu(A)\right)\left(\mu^2(A;B)\mu(A;C,C)+\mu^2(A;C)\mu(A; B,B)\right) 
\\
&\geq F^{\prime}(\mu(A))^2\mu^2(A;B,C)+2F^{\prime}(\mu(A))F^{\prime\prime}\left(\mu(A)\right)\mu(A;C)\mu(A;B)\mu(A;B,C),
\end{align*}
whenever all terms exist and are finite.
\end{thm}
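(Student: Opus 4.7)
The plan is to exploit the $F$-concavity hypothesis through a two-variable concavity that mirrors the classical derivation of \eqref{eq:reverse} from the concavity of $(s,t) \mapsto \vol_n(A+sB+tC)^{1/n}$. Specifically, define
$$g : [0,\infty)^2 \to \R, \qquad g(s,t) := F(\mu(A+sB+tC)).$$
Since $A$ is convex, for $\lambda \in [0,1]$ one has $(1-\lambda)(A+s_1B+t_1C) + \lambda(A+s_2B+t_2C) = A + ((1-\lambda)s_1+\lambda s_2)B + ((1-\lambda)t_1+\lambda t_2)C$. Combined with the $F$-concavity of $\mu$ and the strict monotonicity of $F$, this shows that $g$ is concave on $[0,\infty)^2$ when $F$ is increasing, and convex when $F$ is decreasing. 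In either case, the Hessian of $g$ at any interior point is semidefinite, so its determinant is non-negative: $g_{ss}g_{tt} \geq g_{st}^2$.

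The chain rule, together with the identifications $\partial_s\mu(A+sB+tC)|_{(0,0)} = \mu(A;B)$, $\partial_t\mu(A+sB+tC)|_{(0,0)} = \mu(A;C)$, and the fact that the second partials at $(0,0)$ coincide with the mixed measures of Definition~\ref{def:mixed} (using $\mu(A;B,B) = \odv[2]{}{s}\mu(A+sB)|_{s=0}$ as in \eqref{eq:2BorNot}, and similarly for the other entries), gives
\begin{align*}
g_{ss}(0,0) &= F''(\mu(A))\,\mu(A;B)^2 + F'(\mu(A))\,\mu(A;B,B),\\
g_{tt}(0,0) &= F''(\mu(A))\,\mu(A;C)^2 + F'(\mu(A))\,\mu(A;C,C),\\
g_{st}(0,0) &= F''(\mu(A))\,\mu(A;B)\mu(A;C) + F'(\mu(A))\,\mu(A;B,C).
\end{align*}
Plugging these into $g_{ss}(0,0)g_{tt}(0,0) - g_{st}(0,0)^2 \geq 0$ and expanding, the cross terms $(F''(\mu(A)))^2\mu(A;B)^2\mu(A;C)^2$ cancel, and the remaining terms rearrange to give exactly the stated inequality.

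The main subtlety is that $(0,0)$ is a corner of the domain $[0,\infty)^2$, so concavity of $g$ on the quadrant only directly yields that $v^{\top} H v$ has the correct sign for $v$ in the first quadrant, which is insufficient to force $g_{ss}g_{tt} \geq g_{st}^2$. The workaround is to apply the classical semidefinite Hessian condition at a genuinely interior point $(s_0,t_0)\in(0,\infty)^2$ and observe that $g(s,t) = F(\mu(A' + (s-s_0)B + (t-t_0)C))$ with $A' := A + s_0 B + t_0 C$. This yields the desired inequality with $A'$ in place of $A$; letting $s_0,t_0 \to 0^+$ and invoking continuity of the mixed measures under Hausdorff perturbation (which is part of the standing hypothesis that all terms exist and are finite) then recovers the inequality for $A$.
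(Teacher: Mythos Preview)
Your argument is essentially identical to the paper's: define $f(s,t)=F(\mu(A+sB+tC))$, deduce concavity (or convexity) of $f$ from $F$-concavity and the identity $(1-\lambda)(A+s_1B+t_1C)+\lambda(A+s_2B+t_2C)=A+\cdots$, compute the three second partials at the origin via the chain rule, and read off the inequality from the non-negativity of the Hessian determinant. The paper simply asserts that $f$ is concave ``over $\R^2$'' and evaluates the Hessian at $(0,0)$ without discussing the corner issue you raise; your interior-point-plus-limit workaround is a legitimate way to patch this, though your claim that continuity of the mixed measures in $A$ is ``part of the standing hypothesis that all terms exist and are finite'' is not literally true and would need an extra word (e.g.\ invoking the regularity assumptions under which $\mu(A;B,C)$ is shown to exist in Appendix~A).
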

\begin{proof}
Consider the function of two variables
\begin{equation}
    \label{eq:fen_mu}
    f(s,t)=F\left(\mu(A+sB+tC)\right).
\end{equation}
Suppose, for a moment, that $F$ is an increasing function. Observe that, since $\mu$ is $F$-concave, one has, with $x=(s_1,t_1),y=(s_2,t_2)$, for every $\lambda\in[0,1]:$
\begin{align*}
f((1-\lambda)x+\lambda y)&=F\left(\mu\left((1-\lambda)\left(A+s_1B+t_1C\right)+\lambda\left(A+s_2B+t_2C\right)\right)\right)
\\
&\geq (1-\lambda)F\left(\mu(A+s_1B+t_1C)\right)+\lambda F\left(\mu(A+s_2B+t_2C)\right)
\\
&=(1-\lambda)f(s_1,t_1)+\lambda f(s_2,t_2)
\end{align*}
(here, we used convexity of $A$ to write $A=(1-\lambda)A + \lambda A$ and re-arrange). Hence, $f(s,t)$ is concave over $\R^2$. Similarly, if $F$ is a decreasing function, then $f(s,t)$ is convex over $\R^2$. In either case, the determinant of the Hessian of $f(s,t)$ is non-negative at the origin. Thus,
\begin{equation}\left(\pdv{f}{s,t}(0,0)\right)^2\le \pdv[2]{f}{s}(0,0)\pdv[2]{f}{t}(0,0).
\label{eq:mixed_deriv_fen}
\end{equation}
Notice that $\pdv{f}{s}(s,t)=F^\prime\left(\mu(A+sB+tC)\right)\pdv{}{s}\mu(A+sB+tC)$ and so from \eqref{eq:2BorNot} one has
\begin{align*}\pdv[2]{f}{s}(0,0)=F^{\prime\prime}\left(\mu(A)\right)&\left(\pdv{}{s}\mu(A+sB+tC)\bigg|_{(s,t)=(0,0)}\right)^2
\\
&+F^\prime\left(\mu(A)\right)\left(\pdv[2]{}{s}\mu(A+sB+tC)\bigg|_{(s,t)=(0,0)}\right)
\\
=F^{\prime\prime}\left(\mu(A)\right)&\mu(A;B)^2+F^\prime\left(\mu(A)\right)\mu(A;B,B),
\end{align*}
and similarly for $\pdv[2]{f}{t}(0,0).$ But also, from the definition of $\mu(A;B,C),$ one has
\begin{align*}
    \pdv{f}{t,s}(0,0)=F^{\prime\prime}\left(\mu(A)\right)&\left(\pdv{}{s}\mu(A+sB+tC)\bigg|_{(s,t)=(0,0)}\right)\left(\pdv{}{t}\mu(A+sB+tC)\bigg|_{(s,t)=(0,0)}\right)
    \\
    &+F^\prime\left(\mu(A)\right)\left(\pdv[1]{}{t,s}\mu(A+sB+tC)\bigg|_{(s,t)=(0,0)}\right)
    \\
    =F^{\prime\prime}\left(\mu(A)\right)&\mu(A;B)\mu(A;C)+F^\prime\left(\mu(A)\right)\mu(A;B,C).
\end{align*}
Inserting these computations into \eqref{eq:mixed_deriv_fen} yields the result.

\end{proof}

\noindent We present below the case where $\mu$ is $s$-concave on a class of convex sets as a corollary.
\begin{cor}
\label{cor:AFs}
Let $\mu$ be a Borel measure on $\R^n$ that is $s$-concave, $s\in (-\infty,0)\cup(0,1)$ on a class $\mathcal{C}$ of convex sets. Then, for $A,B,C\in\mathcal{C}$ with $\mu(A)\neq 0$:
\begin{align*}
s\mu(A)&\mu(A; B,B)\mu(A;C,C)
\\
&-s\left(1-s\right)\left(\mu^2(A;B)\mu(A;C,C)+\mu^2(A;C)\mu(A; B,B)\right) 
\\
&\geq s\mu(A)\mu^2(A;B,C)-2s\left(1-s\right)\mu(A;C)\mu(A;B)\mu(A;B,C),
\end{align*}
whenever all terms exist and are finite. If $\mu$ is log-concave $(s=0)$, then under the same assumptions,
\begin{align*}
\mu(A)&\mu(A; B,B)\mu(A;C,C)-\left(\mu^2(A;B)\mu(A;C,C)+\mu^2(A;C)\mu(A; B,B)\right) 
\\
&\geq \mu(A)\mu^2(A;B,C)-2\mu(A;C)\mu(A;B)\mu(A;B,C).
\end{align*}
\end{cor}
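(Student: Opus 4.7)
My plan is to obtain Corollary~\ref{cor:AFs} as a direct specialization of Theorem~\ref{t:min_quad}. For $s$-concavity with $s \neq 0$ the relevant choice is $F(x) = x^s$, and for log-concavity ($s = 0$) the relevant choice is $F(x) = \ln x$. In each case the proof reduces to computing $F'(\mu(A))$ and $F''(\mu(A))$, substituting into the conclusion of Theorem~\ref{t:min_quad}, and then clearing the common factor so that the resulting inequality takes the form written in the statement.

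For $s \in (-\infty, 0) \cup (0, 1)$, take $F(x) = x^s$ and compute
\[
F'(\mu(A)) = s\,\mu(A)^{s-1}, \qquad F''(\mu(A)) = s(s-1)\,\mu(A)^{s-2},
\]
so that $F'(\mu(A))^2 = s^2 \mu(A)^{2s-2}$ and $F'(\mu(A))F''(\mu(A)) = s^2(s-1)\mu(A)^{2s-3}$. Plugging these into the inequality of Theorem~\ref{t:min_quad}, every term carries a common factor of $s\,\mu(A)^{2s-3}$. Dividing by the positive quantity $s^2\mu(A)^{2s-3}$ (which preserves the direction of the inequality because $s^2 > 0$ and $\mu(A)^{2s-3} > 0$), and then restoring a single power of $s$ on both sides to match the form in the statement, gives, after rewriting $(s-1) = -(1-s)$, exactly the first displayed inequality of the corollary.

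For the log-concave case $s = 0$, I would take $F(x) = \ln x$, so $F'(\mu(A)) = \mu(A)^{-1}$ and $F''(\mu(A)) = -\mu(A)^{-2}$, giving $F'(\mu(A))^2 = \mu(A)^{-2}$ and $F'(\mu(A))F''(\mu(A)) = -\mu(A)^{-3}$. Substituting into Theorem~\ref{t:min_quad} and multiplying through by $\mu(A)^3 > 0$ to clear denominators produces the stated log-concave inequality.

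The whole argument is algebraic bookkeeping, so I anticipate no real obstacle. The only point requiring care is sign tracking: one must verify that the common factors pulled out of both sides are positive so that the direction of the inequality is preserved. This is clear, since $\mu(A) > 0$ is assumed, all the relevant powers of $\mu(A)$ are positive, and $s^2 > 0$ for $s \neq 0$.
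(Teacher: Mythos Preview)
Your approach matches the paper's: the corollary is presented there simply as the specialization of Theorem~\ref{t:min_quad} to $F(x)=x^s$ (and $F(x)=\ln x$ for $s=0$), with no further argument given.

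One point in your sign tracking is incomplete. After dividing by the positive quantity $s^{2}\mu(A)^{2s-3}$ you already obtain the clean inequality
\[
\mu(A)\,\mu(A;B,B)\mu(A;C,C)+(s-1)\,Q \ \ge\ \mu(A)\,\mu^{2}(A;B,C)+2(s-1)\,S,
\]
valid for every $s\neq 0$. The step you describe as ``restoring a single power of $s$'' is multiplication by $s$, and for $s<0$ this \emph{reverses} the inequality; your sign-check paragraph only justifies the division step. So for $s<0$ your derivation actually yields the displayed inequality of the corollary with $\le$ in place of $\ge$. The paper does not spell this out either and only invokes the corollary for $s\in[0,1)$ downstream (Lemma~\ref{lem:fen}), so the discrepancy is harmless for the subsequent arguments, but your claim that the procedure ``gives \ldots exactly the first displayed inequality of the corollary'' is not accurate for negative $s$ without this caveat.
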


One can verify that the inequality in Corollary~\ref{cor:AFs} reduces to \eqref{eq:reverse} when $\mu$ is the Lebesgue measure, $s=1/n$, and $\mathcal{C}$ is the class of convex bodies in $\R^n$. One can also take $s=1/n$ and $\mathcal{C}$ to be all symmetric convex bodies if $\mu\in\mathcal{M}_n$. In both of those cases, every $A$ such that $\mu(A)=0$ can be approximated by a sequence in $\mathcal{C}$ that have positive measure. Thus, in Corollary~\ref{cor:AFs}, the assumption $\mu(A)\neq 0$ can be removed. We can continue developing the result.
\begin{lem}
\label{lem:fen}
   Let $\mu$ be a Borel measure on $\R^n$ that is $s$-concave, $s\in [0,1),$ on a class $\mathcal{C}$ of convex sets, each of which contain the origin. Then, for $A,B,C\in\mathcal{C}$ with $\mu(A)\neq 0$, we have

\begin{equation} (1-\sqrt{\mathcal{D}})(1-s) \leq \frac{\mu(A)\mu(A;B,C)}{\mu(A;B)\mu(A;C)} \leq (1+ \sqrt{\mathcal{D}})(1-s) ,
\label{eq:Fenchel_poly}
\end{equation}
where
$$\mathcal{D}:=\left(1-\frac{\mu(A)}{1-s}\frac{\mu(A;B,B)}{\mu(A;B)^2}\right)\left(1-\frac{\mu(A)}{1-s}\frac{\mu(A;C,C)}{\mu(A;C)^2}\right)\geq 0,$$
whenever all terms exist; recall that $s=0$ means $\mu$ is log-concave.
\end{lem}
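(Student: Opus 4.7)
The plan is to recognize the two-sided bound \eqref{eq:Fenchel_poly} as the statement that $\mu(A;B,C)$ lies between the roots of a single-variable quadratic whose coefficients come directly from Corollary~\ref{cor:AFs}; everything else is algebra.

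To unify the cases $s\in(0,1)$ and $s=0$, I would divide the inequality of Corollary~\ref{cor:AFs} through by the positive factor $s$ when $s>0$, and use the log-concave version directly when $s=0$. In both cases one obtains, after rearrangement,
$$\mu(A)\,\mu(A;B,C)^2 - 2(1-s)\mu(A;B)\mu(A;C)\,\mu(A;B,C) + (1-s)\bigl[\mu(A;B)^2\mu(A;C,C)+\mu(A;C)^2\mu(A;B,B)\bigr] - \mu(A)\mu(A;B,B)\mu(A;C,C) \leq 0.$$
Viewed as a quadratic $\alpha x^2+\beta x+\gamma\leq 0$ in $x=\mu(A;B,C)$ with $\alpha=\mu(A)>0$, the fact that this inequality is satisfied by the real number $x=\mu(A;B,C)$ forces its discriminant $\beta^2-4\alpha\gamma$ to be nonnegative and pins $x$ to the closed interval between the two real roots.

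The crucial identity is that the discriminant factors cleanly as
$$\beta^2-4\alpha\gamma = 4\bigl[(1-s)\mu(A;B)^2-\mu(A)\mu(A;B,B)\bigr]\bigl[(1-s)\mu(A;C)^2-\mu(A)\mu(A;C,C)\bigr],$$
which is a direct polynomial expansion; pulling out $(1-s)\mu(A;B)^2$ and $(1-s)\mu(A;C)^2$ from the two brackets rewrites this product as $4(1-s)^2\mu(A;B)^2\mu(A;C)^2\,\mathcal{D}$. Since $B$ and $C$ contain the origin, $\mu(A;B)$ and $\mu(A;C)$ are nonnegative; I would assume them to be strictly positive (otherwise the ratio in \eqref{eq:Fenchel_poly} is degenerate and the lemma is vacuous). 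Each of the two brackets on the left is nonnegative by the $s$-concave Minkowski second inequality (Proposition~\ref{prop:second_s}), which simultaneously verifies $\mathcal{D}\geq 0$ and gives $\sqrt{\beta^2-4\alpha\gamma}=2(1-s)\mu(A;B)\mu(A;C)\sqrt{\mathcal{D}}$. The quadratic formula now reads
$$\mu(A;B,C) \in \left[\frac{(1-s)\mu(A;B)\mu(A;C)(1-\sqrt{\mathcal{D}})}{\mu(A)},\ \frac{(1-s)\mu(A;B)\mu(A;C)(1+\sqrt{\mathcal{D}})}{\mu(A)}\right],$$
and multiplying through by $\mu(A)/[\mu(A;B)\mu(A;C)]$ produces exactly \eqref{eq:Fenchel_poly}.

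The main obstacle is really bookkeeping: the entire argument hinges on noticing that the discriminant of the quadratic coming from Corollary~\ref{cor:AFs} factors as a product of two Minkowski second-inequality expressions. Once that factorization is observed, the proof collapses into a single application of the quadratic formula, and no further concavity input is needed beyond what is already encoded in Propositions~\ref{prop:second_s} and Corollary~\ref{cor:AFs}.
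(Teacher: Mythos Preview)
Your proposal is correct and follows essentially the same route as the paper: rearrange Corollary~\ref{cor:AFs} into a quadratic inequality, observe that the discriminant factors as the product defining $\mathcal{D}$, and read off the two-sided bound from the quadratic formula. The only cosmetic difference is that the paper first normalizes to a monic quadratic in the ratio $t=\frac{\mu(A)\mu(A;B,C)}{(1-s)\mu(A;B)\mu(A;C)}$ (so the roots are $1\pm\sqrt{\mathcal{D}}$ directly), and obtains $\mathcal{D}\geq 0$ from the discriminant being nonnegative rather than by invoking Proposition~\ref{prop:second_s}; your use of Minkowski's second inequality to see each factor is nonnegative is equally valid and in fact slightly stronger.
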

\begin{proof}
From the hypothesis that each set in $\mathcal{C}$ contains the origin, we have that $\mu(A;B) \geq 0$ for every $A,B\in\mathcal{C}$, since $A+\epsilon B \supset A$ yields $\mu(A+\epsilon B) - \mu(A) \geq 0,$ and thus $\mu(A;B)$ is a limit of non-negative quantities. Next, define the following variables
$$x=\frac{\mu(A;B,B)}{\mu(A;B)^2}\frac{\mu(A)}{1-s}, \quad y = \frac{\mu(A;C,C)}{\mu(A;C)^2}\frac{\mu(A)}{1-s}$$
and 
$$\beta = x+y -xy.$$
Consider the polynomial
\begin{equation}
\label{eq:poly}
    P(t)=t^2-2t+\beta.
\end{equation}
Clearly, $P(t) >0$ for $t$ large enough. However, Corollary~\ref{cor:AFs} precisely states upon re-arrangement (using that $\mu(A;B) \geq 0$ and $s>0$):
$$P\left(\frac{\mu(A;B,C)\mu(A)}{(1-s)\mu(A;B)\mu(A;C)}\right) \leq 0.$$
Therefore, we deduce that the polynomial defined by \eqref{eq:poly} must have real roots, and, since the choice of $t=\frac{\mu(A;B,C)\mu(A)}{(1-s)\mu(A;B)\mu(A;C)}$ yields a non-positive value for $P(t),$ we must have that this choice of $t$ is between the two roots, i.e.
$$1-\sqrt{1-\beta} \leq \frac{\mu(A)\mu(A;B,C)}{(1-s)\mu(A;B)\mu(A;C)} \leq 1+\sqrt{1-\beta}.$$
Here, we know that $1-\beta \geq 0$, since the discriminant of $P(t)$ is non-negative, i.e. $4-4\beta\geq 0$. Setting $\mathcal{D}:=1-\beta$ yields \eqref{eq:Fenchel_poly}. However, notice that
$$\mathcal{D}=1-\beta = 1-x-y +xy=(1-x)(1-y);$$
inserting the definitions of $x$ and $y$ yields the formula for $\mathcal{D}$.
\end{proof}
Using Lemma~\ref{lem:fen}, we obtain a Fenchel-type inequality for $s$-concave measures.
\begin{thm}[Fenchel-type inequality for $s$-concave measures]
    Let $\mu$ be a Borel measure on $\R^n$ that is $s$-concave, $s\in [0,1)$ on a class $\mathcal{C}$ of convex sets, each of which contain the origin. Then, for $A,B,C\in\mathcal{C}$ such that $\mu(A;B,B),\mu(A;C,C)$ and $\mu(A;B,C)$ exist, one has
    $$\frac{\mu(A)\mu(A;B,C)}{\mu(A;B)\mu(A;C)} \leq (1-s) \left(2-\frac{\mu(A)}{2(1-s)}\left(\frac{\mu(A;B,B)}{\mu(A;B)^2}+\frac{\mu(A;C,C)}{\mu(A;C)^2}\right)\right)$$
    if $\mu(A;B)$ and $\mu(A;C)$ are finite and $\mu(A)\neq 0$; recall $s=0$ means $\mu$ is log-concave.
    \label{t:fen}
\end{thm}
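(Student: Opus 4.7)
The plan is to deduce Theorem~\ref{t:fen} from Lemma~\ref{lem:fen} by an application of the arithmetic-geometric mean inequality, after first establishing that the relevant quantities lie in the correct range for AM-GM to be applicable.

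Lemma~\ref{lem:fen} already provides the two-sided bound
\[
(1-\sqrt{\mathcal{D}})(1-s) \;\leq\; \frac{\mu(A)\mu(A;B,C)}{\mu(A;B)\mu(A;C)} \;\leq\; (1+\sqrt{\mathcal{D}})(1-s),
\]
where, writing
\[
x=\frac{\mu(A;B,B)}{\mu(A;B)^2}\frac{\mu(A)}{1-s},\qquad y=\frac{\mu(A;C,C)}{\mu(A;C)^2}\frac{\mu(A)}{1-s},
\]
one has $\mathcal{D}=(1-x)(1-y)\geq 0$. So the proof reduces to showing
\[
1+\sqrt{(1-x)(1-y)} \;\leq\; 2-\tfrac{1}{2}(x+y),
\]
which is exactly AM-GM in the form $\sqrt{ab}\leq (a+b)/2$ applied with $a=1-x$ and $b=1-y$, provided both factors are non-negative.

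The essential step is therefore to verify $x\leq 1$ and $y\leq 1$. This is furnished by Minkowski's second inequality for $s$-concave measures (Proposition~\ref{prop:second_s}), which asserts $\mu(A)\mu(A;B,B)\leq (1-s)\mu(A;B)^2$, i.e.\ $x\leq 1$, and analogously $y\leq 1$. (Note that the hypothesis that every member of $\mathcal{C}$ contains the origin guarantees $\mu(A;B),\mu(A;C)\geq 0$, as observed at the start of the proof of Lemma~\ref{lem:fen}, so dividing by these quantities preserves the direction of the inequality; the case $\mu(A;B)=0$ or $\mu(A;C)=0$ is trivial or vacuous.)

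With $1-x,\,1-y\geq 0$, AM-GM gives
\[
\sqrt{(1-x)(1-y)}\leq \frac{(1-x)+(1-y)}{2}=1-\frac{x+y}{2},
\]
so $1+\sqrt{\mathcal{D}}\leq 2-\frac{x+y}{2}$. Multiplying by $(1-s)$ and substituting the definitions of $x$ and $y$ yields
\[
\frac{\mu(A)\mu(A;B,C)}{\mu(A;B)\mu(A;C)} \leq (1-s)\left(2-\frac{\mu(A)}{2(1-s)}\left(\frac{\mu(A;B,B)}{\mu(A;B)^2}+\frac{\mu(A;C,C)}{\mu(A;C)^2}\right)\right),
\]
as required. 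There is no substantive obstacle beyond being careful that Proposition~\ref{prop:second_s} is what certifies the applicability of AM-GM; the rest is algebraic manipulation.
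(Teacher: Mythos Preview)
Your proof is correct and follows essentially the same approach as the paper: invoke Lemma~\ref{lem:fen}, use Proposition~\ref{prop:second_s} to certify that both factors $1-x$ and $1-y$ are non-negative, and then apply AM--GM to bound $\sqrt{\mathcal{D}}$ by $1-(x+y)/2$.
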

\begin{proof}
    Under the notation of Lemma~\ref{lem:fen}, the arithmetic-geometric mean inequality yields
    $$\sqrt{\mathcal{D}} \leq 1 - \frac{\mu(A)}{2(1-s)}\left(\frac{\mu(A;B,B)}{\mu(A;B)^2}+\frac{\mu(A;C,C)}{\mu(A;C)^2}\right),$$
    which we can use, since Proposition~\ref{prop:second_s} shows that $\mathcal{D}$ is the product of two non-negative numbers. From the right-hand side of \eqref{eq:Fenchel_poly}, we obtain
    $$\frac{\mu(A)\mu(A;B,C)}{\mu(A;B)\mu(A;C)} \leq (1-s) \left(2-\frac{\mu(A)}{2(1-s)}\left(\frac{\mu(A;B,B)}{\mu(A;B)^2}+\frac{\mu(A;C,C)}{\mu(A;C)^2}\right)\right)$$
    as claimed.
    \end{proof}
   If, moreover, $\mu(A;B,B)$ and $\mu(A;C,C)$ are non-negative (e.g. in the case of volume) then, one has that
    \begin{equation}-\frac{\mu(A)}{2(1-s)}\left(\frac{\mu(A;B,B)}{\mu(A;B)^2}+\frac{\mu(A;C,C)}{\mu(A;C)^2}\right) \leq 0.
    \label{eq:fen_crit}
    \end{equation}
    \noindent Theorem~\ref{t:fen} then implies in this instance
    \begin{equation}
    \mu(A)\mu(A;B,C) \leq 2 (1-s) \mu(A;B)\mu(A;C).
    \label{eq:sup_fen}
    \end{equation}
Let $A$ be an arbitrary convex body, $B$ and $C$ be arbitrary compact, convex sets, $\mu=\lambda_n$ the Lebesgue measure, and $s=1/n$. Then \eqref{eq:fen_crit} is always true. Thus, \eqref{eq:sup_fen} becomes $$\vol_n(A)\lambda_n(A;B,C)\leq 2 \frac{n-1}{n}\lambda_n(A;B)\lambda_n(A;C).$$
Replacing mixed measures with mixed volumes (recall $\lambda_n(A;B)=nV(A[n-1],B)$ and $\lambda_n(A;B,C)=n(n-1)V(A[n-2],B,C)$) then yields the classical Fenchel's inequality \eqref{eq:fen}. That is, Theorem~\ref{t:fen} yields a new proof of \eqref{eq:fen}.

\section{Applications of mixed measures}
\label{sec:logsup}
\subsection{Supermodularity over convex sets characterizes the Lebesgue Measure}
\label{sec:lebclass}
We recall that a Borel measure $\mu$ on $\R^n$ is supermodular over a class of compact sets $\mathcal{C}$ if for $A,B,C\in\mathcal{C}$ one has
    $$
        \mu(A+B+C) +\mu(A) \geq \mu(A+B) + \mu(A+C).$$
Similarly, a measure is  submodular if 
$$\mu(A+B+C) +\mu(A) \leq \mu(A+B) + \mu(A+C).$$
We next establish that any supermodular measure is the weak limit of a sequence of supermodular measures with continuously differentiable density using a measure-theory-based approach. In Appendix C, Section~\ref{sec:density}, we give a more geometric-based approach.
\begin{prop}
\label{p:sup_mod_shift}
    Let $n\geq 1$. Consider a Borel measure $\mu$ and Borel sets $A,B,C\subset \R^n$. Suppose that, for every $z\in \R^n$,
      \begin{equation}
        \mu(A-z+B+C) +\mu(A-z) \geq \mu(A-z+B) + \mu(A-z+C).
        \label{eq:sup_mod_shift}
    \end{equation}
    Then there exists a sequence of Borel measures $\{\mu_j\}$, each with continuously differentiable density, such that $\mu_j$ converges to $\mu$ weakly and \eqref{eq:sup_mod_shift} holds with $\mu$ replaced by $\mu_j$ for each $j$.
\end{prop}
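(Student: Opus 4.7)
The natural strategy is mollification of $\mu$ against a smooth approximate identity. I would fix a non-negative bump function $\phi \in C^\infty_c(\R^n)$ supported in the unit ball with $\int \phi = 1$, set $\phi_j(y) = j^n \phi(jy)$, and define $\mu_j$ as the measure with density $f_j(x) := (\phi_j * \mu)(x) = \int \phi_j(x-y)\,d\mu(y)$. Equivalently, $\mu_j$ is the push-forward of $\mu \otimes (\phi_j\,dy)$ under the addition map $(x,y)\mapsto x+y$, so that
\[
\mu_j(E) = \int \mu(E-y)\,\phi_j(y)\,dy
\]
for every Borel set $E$. Local finiteness of $\mu$ (implicit in the Radon setting of Section~\ref{sec:lebclass}) combined with $\phi_j \in C^\infty_c$ justifies differentiation under the integral sign on any compact set, so $f_j$ is $C^\infty$, in particular continuously differentiable.

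For weak convergence, I would apply Fubini to rewrite, for any $f \in C_c(\R^n)$,
\[
\int f\,d\mu_j = \int (f \ast \check{\phi}_j)(y)\,d\mu(y), \qquad \check{\phi}_j(x) := \phi_j(-x).
\]
Standard mollifier estimates give $f \ast \check{\phi}_j \to f$ uniformly, with supports contained in a fixed compact set $K$ for all large $j$; since $\mu(K) < \infty$, dominated convergence yields $\int f\,d\mu_j \to \int f\,d\mu$, which is precisely weak convergence of Radon measures.

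The substance of the proposition is that hypothesis \eqref{eq:sup_mod_shift} is preserved under convolution with a non-negative kernel. Writing
\[
\mu_j(A - z + B + C) = \int \mu\bigl(A - (z+y) + B + C\bigr)\,\phi_j(y)\,dy
\]
and similarly for the three other terms, subtraction gives
\[
\bigl[\mu_j(A-z+B+C) + \mu_j(A-z)\bigr] - \bigl[\mu_j(A-z+B) + \mu_j(A-z+C)\bigr] = \int \Delta(z+y)\,\phi_j(y)\,dy,
\]
where $\Delta(w) := \mu(A-w+B+C) + \mu(A-w) - \mu(A-w+B) - \mu(A-w+C)$. The hypothesis applied at $w = z+y$ makes $\Delta(z+y) \geq 0$ for every $y$, and $\phi_j \geq 0$ then forces the integral to be non-negative, which is exactly \eqref{eq:sup_mod_shift} for $\mu_j$.

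There is no serious obstacle: the only substantive point is recognizing that \eqref{eq:sup_mod_shift} is postulated at \emph{every} shift $z$, which is precisely the right form for the integration-against-$\phi_j(y)\,dy$ manipulation to close the loop. A weaker hypothesis assumed only at $z=0$ would be lost after averaging. Fubini justifications and the local finiteness of $\mu$ needed to define $f_j$ and carry out the set-theoretic manipulations are routine in the Radon setting.
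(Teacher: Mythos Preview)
Your proposal is correct and follows essentially the same approach as the paper: convolve $\mu$ with a non-negative smooth mollifier and observe that, because \eqref{eq:sup_mod_shift} is assumed at every shift, integrating the inequality against $\phi_j(y)\,dy$ preserves it. The paper carries out the same computation by multiplying the inequality at the shift $z+z_0$ by a non-negative $g\in L^1$ and integrating in $z$, then invoking Fubini and a substitution to recognize the result as the inequality for the measure with density $\mu\ast g$; your push-forward formula $\mu_j(E)=\int\mu(E-y)\,\phi_j(y)\,dy$ packages the same manipulation more compactly.
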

\begin{proof}
Consider \eqref{eq:sup_mod_shift} not for $z$ but for $z+z_0$ ($z,z_0\in\R^n$). Next, multiply the inequality through by an arbitrary, non-negative $g\in L^1(\R^n,\mu)\cap L^1(\R^n,dx)$ and integrate over $\R^n$ (in the variable $z$) to obtain
\begin{align*}\int_{\R^n}&\int_{A+B+C-(z+z_0)}d\mu(x)g(z)dz + \int_{\R^n}\int_{A-(z+z_0)}d\mu(x)g(z)dz
\\
&\geq \int_{\R^n}\int_{A+B-(z+z_0)}d\mu(x)g(z)dz + \int_{\R^n}\int_{A+C-(z+z_0)}d\mu(x)g(z)dz.
\end{align*}
We then use Fubini's theorem and the variable substitution $z\mapsto z-x$ to obtain
\begin{align*}\int_{A+B+C-z_0}\int_{\R^n}&g(z-x)dzd\mu(x) + \int_{A-z_0}\int_{\R^n}g(z-x)dzd\mu(x)
\\
&\geq \int_{A+B-z_0}\int_{\R^n}g(z-x)dzd\mu(x) + \int_{A+C-z_0}\int_{\R^n}g(z-x)dzd\mu(x).
\end{align*}
Defining the function $\phi_g(z)=(\mu \ast g)(z)=\int_{\R^n}g(z-x)d\mu(x)$
 yields
\begin{align*}\int_{A+B+C-z_0}\phi_g(z)dz + \int_{A-z_0}\phi_g(z)dz
\geq \int_{A+B-z_0}\phi_g(z)dz + \int_{A+C-z_0}\phi_g(z)dz.
\end{align*}
Let $\mu_g$ be the Borel measure with density $\phi_g$. Then, we have shown that \eqref{eq:sup_mod_shift} holds with $\mu$ replaced by $\mu_g$ and $z$ replaced by $z_0$.
In particular, we can consider a sequence of measures $\{\mu_{g_j}\}$, where $g$ is a mollifier and $g_j(z)=j^ng(zj)$. One has that $\phi_{g_j}$ is continuously differentiable for all $j=1,\dots,$ $\lim_{j\to\infty} \mu_j=\mu$ weakly, and each $\mu_{g_j}$ satisfies \eqref{eq:sup_mod_shift}.
\end{proof}

We are now in position to show the main theorem of this section: if a Radon measure $\mu$ is supermodular over all convex bodies, then it must be a multiple of the Lebesgue measure. 
\begin{thm}\label{t:Radon}
    Let $n\geq 1$. Let $\mu$ be a Radon measure that is supermodular (i.e. satisfies \eqref{eq:sup_mod}) over the class of all convex bodies in $\R^n$. Then $\mu$ is a multiple of the Lebesgue measure.
\end{thm}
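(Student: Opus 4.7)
The plan is to reduce to measures with smooth density via mollification, use the local form of supermodularity to show that $A\mapsto \mu(A;B)$ is translation-invariant for every compact convex $B$, and then use the surface-integral formula for $\mu(A;B)$ with $A$ a small ball to force the density to be constant.

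For the reduction, Proposition~\ref{p:sup_mod_shift} produces a sequence of supermodular measures $\mu_j$ with continuously differentiable densities $\phi_j$ converging weakly to $\mu$. Weak limits of multiples of Lebesgue measure are again such multiples (the constants being pinned down by $\mu(K)/\lambda_n(K)$ for any convex body $K$ with $\lambda_n(\partial K)=0$), so it suffices to prove the theorem assuming $\mu$ has a continuously differentiable density $\phi$.

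Under this assumption, Proposition~\ref{p:sup_deriv_equiv_0}(2) yields
\[
\mu(A+C;B)\geq \mu(A;B) \quad \text{for every triple of convex bodies } A,B,C.
\]
Since $\mu$ has smooth density, the surface-integral representation from Appendix~A gives
\[
\mu(A;B) \;=\; \int_{\partial A} h_B(\nu_A(x))\,\phi(x)\,d\mathcal{H}^{n-1}(x),
\]
so $\mu(\cdot;B)$ is continuous under Hausdorff convergence of convex bodies. Approximating a point $\{v\}$ by the shrinking bodies $\{v\}+\varepsilon B_2^n$ and letting $\varepsilon\to 0^+$ gives $\mu(A+v;B)\geq \mu(A;B)$ for every $v\in\R^n$; applying the same argument with $A$ replaced by $A+v$ and $v$ by $-v$ produces the reverse inequality. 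Hence $\mu(A+v;B)=\mu(A;B)$ for every convex body $A$, compact convex $B$, and $v\in\R^n$. Specializing to $A=x_0+rB_2^n$ (so $\nu_A(x_0+ru)=u$) and $B=B_2^n$ gives
\[
\int_{\s^{n-1}}\phi(x_0+ru)\,d\sigma(u) \;=\; \int_{\s^{n-1}}\phi(x_0+v+ru)\,d\sigma(u),
\]
and letting $r\to 0^+$ yields $\phi(x_0+v)=\phi(x_0)$ for every $x_0,v\in\R^n$, i.e., $\phi$ is constant.

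The principal obstacle is the passage from convex-body $C$ to the singleton $C=\{v\}$ in the monotonicity inequality: we need it in both directions (for $v$ and $-v$), which requires continuity of the mixed-measure functional under Hausdorff convergence in order to approximate points by shrinking bodies. Once the density is continuous this continuity is in hand, but it is the decisive technical input; presumably the short route via Lemma~\ref{l:points} mentioned in the acknowledgments is a clean packaging of precisely this translation-by-a-point step.
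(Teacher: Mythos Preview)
Your proof is correct. After mollifying, you use item (2) of Proposition~\ref{p:sup_deriv_equiv_0}, approximate a singleton $\{v\}$ by shrinking balls to get $\mu(A+v;B)\ge\mu(A;B)$, then swap $v$ and $-v$ to obtain translation invariance of $A\mapsto\mu(A;B)$, and finally shrink $A=x_0+rB_2^n$ to a point. This is essentially the argument the paper uses for Theorem~\ref{t:surarecom}, which it notes (in a remark) yields Theorem~\ref{t:Radon} as a corollary.

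The paper's primary proof, however, goes via Lemma~\ref{l:points} and is organized differently. Instead of passing through mixed measures $\mu(A;B)$, it stays at the level of the supermodularity inequality itself: with $A=rK+z$ and $B=\{x\}$, $C=\{y\}$ (points, reached by the same shrinking approximation you use), Lebesgue differentiation in $r$ gives the pointwise inequality $\varphi(z)+\varphi(z+x+y)\ge\varphi(z+x)+\varphi(z+y)$; setting $y=te_i$, dividing by $t$ and sending $t\to 0$ shows $\nabla_i\varphi$ is monotone in every direction, hence constant, so $\varphi$ is affine and therefore constant by nonnegativity. The upshot of the paper's route is that it needs supermodularity only over homothets and translates of a single fixed body $K$ (this is the content of Lemma~\ref{l:points}), which is a sharper hypothesis than your argument uses. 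Your route, on the other hand, isolates translation invariance of the weighted surface area as the key intermediate step and avoids the affine-function observation entirely; it also makes explicit the continuity of $\mu(\cdot;B)$ needed to pass from bodies to points, which the paper leaves implicit when invoking Lemma~\ref{l:points}.
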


Theorem~\ref{t:Radon} is actually a direct consequence of Lemma~\ref{l:points} below, where we show supermodularity of a Radon measure $\mu$ over a class of homothets of a fixed convex body $K$ is enough to conclude $\mu$ is a multiple of the Lebesgue measure. First, we need some facts about Lebesgue's differentiation theorem. Let $B_2^n(r,x)=x+rB_2^n$.The usual Lebesgue differentiation theorem states that, for an integrable function $f$, $f(x)=\lim_{r\to 0^+}\frac{1}{\vol_n(B_2^n(r,x))}\int_{B_2^n(r,x)}f(y)dy$ almost everywhere. 
There is a more general extension of Lebesgue's differentiation theorem (e.g. in  Guzm\'an \cite{Guz75:book}, Folland \cite{Fol99:book} and \cite[p. 108]{SS05:book}): for almost every $x\in\R^n,$
$$f(x)=\lim_{r\to 0^+}\frac{1}{\vol_n(E_r(x))}\int_{E_r(x)}f(y)dy$$
for every family of Borel sets $\{E_r(x)\}_{r>0}$ that shrink nicely to $x$. Recall that such a family shrinks nicely to $x$ if $E_r(x) \subseteq B_2^n(r,x)$, and yet there exists a constant $\alpha(x)$ independent of $r$ such that $\vol_n(E_r(x))\geq \alpha(x) \vol_n(B_2^n(r,x))$.  For a convex body $K$, set $K(r,z)=rK+z$ for $r>0$ and $z\in\R^n$. Then, since $K$ has non-empty interior and is compact and star-shaped, $\{K(r,x)\}_{r>0}$ shrinks nicely to $x$.

\begin{lem}
\label{l:points}
    Let $\mu$ be a Radon measure on $\R^n$, $n\geq 1$, with the following property: there exists a convex body $K$ such that, for every $x,y,z\in\R^n$ and $r$ small enough (say, $r\in (0,1)$),
    \begin{equation}\mu(K(r,z))+\mu(K(r,z)+x+y) \geq \mu(K(r,z)+x)+\mu(K(r,z)+y).
    \label{eq:points}
    \end{equation}    
    \noindent Then $\mu$ is a constant multiple of the Lebesgue measure.
\end{lem}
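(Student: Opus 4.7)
The plan is to reduce to the case where $\mu$ has a continuously differentiable density via mollification, extract a pointwise functional equation by letting $r \to 0^+$, and solve it using non-negativity. The hypothesis \eqref{eq:points} has exactly the form \eqref{eq:sup_mod_shift} in Proposition~\ref{p:sup_mod_shift} with $A = rK$, $B = \{x\}$, $C = \{y\}$. The proof of that proposition constructs approximating measures $\mu_j = \mu \ast g_j$ by convolution with a standard mollifier $g_j$, a construction independent of $A$, $B$, $C$. Hence the \emph{same} sequence $\{\mu_j\}$, each with continuously differentiable density $f_j$, satisfies \eqref{eq:points} for every $x, y, z \in \R^n$ and every $r \in (0, 1)$, and $\mu_j \to \mu$ weakly. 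Since a weak limit of constant multiples of $\lambda_n$ is again a constant multiple of $\lambda_n$, it suffices to prove each $f_j$ is constant.

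Fix $j$ and write $f = f_j$. Dividing the inequality
\[
\int_{rK + z} f + \int_{rK + z + x + y} f \ge \int_{rK + z + x} f + \int_{rK + z + y} f
\]
by $r^n \vol_n(K)$ and letting $r \to 0^+$, the continuity of $f$ together with the fact that $\{rK + w\}_{r > 0}$ shrinks nicely to $w$ yields the pointwise inequality
\[
f(z) + f(z + x + y) \ge f(z + x) + f(z + y) \quad \text{for all } x, y, z \in \R^n.
\]
Substituting $y \mapsto -y$ and then $z \mapsto z + y$ gives the reverse inequality, so in fact $f(z) + f(z+x+y) = f(z+x) + f(z+y)$. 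Setting $z = 0$ shows $g(w) := f(w) - f(0)$ is Cauchy-additive, and continuity forces $g$ to be $\R$-linear: $f(z) = f(0) + a_j \cdot z$ for some $a_j \in \R^n$.

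Because $f_j$ is a continuous density of a non-negative measure, $f_j \ge 0$ everywhere; an affine non-negative function on $\R^n$ must be constant, so $a_j = 0$ and $\mu_j = c_j \lambda_n$ for some $c_j \ge 0$. Passing to the weak limit against any non-negative $\phi \in C_c(\R^n)$ with $\int \phi\, dx > 0$ forces $c_j \to c = \int \phi\, d\mu \big/ \int \phi\, dx$, whence $\mu = c\, \lambda_n$. The main subtlety worth flagging is that the argument needs a \emph{single} mollified sequence to satisfy the entire family of inequalities simultaneously (indexed by $r$, $x$, $y$), not just one inequality at a time; this is why the proof invokes the construction inside Proposition~\ref{p:sup_mod_shift} rather than merely its statement.
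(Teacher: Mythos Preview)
Your proof is correct and follows the same architecture as the paper's: mollify via the construction in Proposition~\ref{p:sup_mod_shift}, apply Lebesgue differentiation to obtain the pointwise inequality $f(z)+f(z+x+y)\ge f(z+x)+f(z+y)$, deduce the density is affine and hence constant by non-negativity, and pass to the weak limit. The only minor variation is that you upgrade the pointwise inequality to an equality via the substitution $y\mapsto -y$, $z\mapsto z+y$ and then invoke Cauchy's functional equation, whereas the paper sets $y=te_i$ and differentiates to show each $\nabla_i\varphi$ is constant; both routes are valid and essentially equivalent.
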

\begin{proof}
    Assume that $\mu$ has continuously differentiable density; denote this density by $\varphi$. We use Lebesgue's differentiation theorem along the basis $\{K(r,x)\}_{r>0}$. We divide \eqref{eq:points} by $r^{n}\vol_n(K)$ and send $r$ to $0$ to obtain from the Lebesgue differentiation theorem that
\begin{equation}
        \varphi(z)+\varphi(z+x+y) \geq \varphi(z+x)+\varphi(z+y).
        \label{eq:points_2}
        \end{equation}
    Set $y=te_i$ for $t>0$ and $e_i$ the $i$th canonical basis vector. Then, \eqref{eq:points_2} yields
    \begin{equation*}
        \varphi(z+x+te_i) - \varphi(z+x) \geq \varphi(z+te_i)-\varphi(z).
        \end{equation*}
        Dividing by $t$ and sending $t$ to $0$ then yields $\nabla_i \varphi(z+x) \geq \nabla_i\varphi(z)$, where $\nabla_i \varphi(x)=\langle \nabla \varphi(x),e_i \rangle$ is the directional derivative. Since $x$ is arbitrary, we deduce $\nabla_i \varphi(x)$ is a constant function. Since this is true for all $i$, we must have that $\varphi$ is an affine function. Since $\varphi$ is the density of a Borel measure, it is non-negative; a non-negative, affine function must be constant, and the claim follows.

For the general case, we first use Proposition~\ref{p:sup_mod_shift} to  approximate $\mu$ with a sequence of Borel measures $\mu_j$ that have continuously differentiable densities and satisfy \eqref{eq:points}. Then, $\mu_j=c_j\vol_n$ for a sequence of non-negative $c_j$. Using that $\mu$ is a Radon measure, we get that the sequence $\{c_j\}$ is bounded. Thus, up to passing to a subsequence, $c_j\to c$ for some $c>0$. On the other-hand, $\mu_j\to\mu$ weakly (which we recall, means $\int_{\R^n}f(x)d\mu_j(x)\to \int_{\R^n}f(x)d\mu(x)$ for every compactly supported, continuous function $f$). Consequently, we obtain that $\mu$ coincides with $c\vol_n$ on open sets, and thus $\mu=c\vol_n$. 
\end{proof}

\begin{rem}
    As pointed out by the referee, the above procedure also works for submodularity, i.e., if a measure $\mu$ satisfies \eqref{eq:points}, but with the opposite inequality, then it would have to be the Lebesgue measure. But the Lebesgue measure is not submodular. Consequently, there are no measures that are submodular over the set of all convex, compact sets.
\end{rem}

We conclude this subsection by using the main theorem to obtain results for $\mu(A;B,C)$. We first establish the localized form of supermodularity, which is a more general form of Proposition~\ref{p:sup_deriv_equiv_0} from the introduction. Indeed, Proposition~\ref{p:sup_deriv_equiv_0} is merely the case when $\mathcal{A}=\mathcal{B}=\mathcal{C}$ in Proposition~\ref{p:sup_deriv_equiv}.

\begin{prop}\label{p:sup_deriv_equiv}
    Let $n\geq 1$ and consider a Radon measure $\mu$ on $\R^n$. Let $\mathcal{A}, \mathcal{B}$ and $\mathcal{C}$ be classes of Borel sets such that $\mathcal{B},\mathcal{C} \subset \mathcal{A}$ and the limits in the definitions of $\mu(A;B)$ and $\mu(A;B,C)$ exist for every $A\in\mathcal{A}, B\in\mathcal{B}, C\in\mathcal{C}$. Then the following three families of inequalities are equivalent: 
    \begin{enumerate}
        \item $\mu(A+B+C) +\mu(A) \geq \mu(A+B) + \mu(A+C),$ for every $A\in\mathcal{A}, B\in\mathcal{B}$ and $C\in\mathcal{C}$,
        \item  $\mu(A+C;B)\geq \mu(A;B)$, for every $A\in\mathcal{A}, B\in\mathcal{B}$ and $C\in\mathcal{C}$,
        \item $\mu(A;B,C) \geq 0$, for every $A\in\mathcal{A}, B\in\mathcal{B}$ and $C\in\mathcal{C}$.
    \end{enumerate}
\end{prop}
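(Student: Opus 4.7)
My plan is to prove the chain of implications $(1) \Rightarrow (2) \Rightarrow (3) \Rightarrow (1)$.

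For $(1) \Rightarrow (2)$, I apply (1) with $B$ replaced by $\epsilon B$ (which lies in $\mathcal{B}$ since $\mathcal{B}$ is a class) and rearrange to
\[
\mu(A + C + \epsilon B) - \mu(A + C) \geq \mu(A + \epsilon B) - \mu(A).
\]
Dividing by $\epsilon$ and taking $\liminf$ as $\epsilon \to 0^+$ yields (2); note $A + C \in \mathcal{A}$ since $\mathcal{A}$ is a class containing $\mathcal{C}$, so both sides converge to genuine mixed measures by the standing hypothesis on existence of limits.

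For $(2) \Rightarrow (3)$, I apply (2) with $\epsilon C \in \mathcal{C}$ in place of $C$ to obtain
\[
\frac{\mu(A + \epsilon C; B) - \mu(A; B)}{\epsilon} \geq 0,
\]
and let $\epsilon \to 0^+$. The alternative limit formula \eqref{eq:limit_def_alt} identifies the left side as $\mu(A; C, B)$, which equals $\mu(A; B, C)$ by the Schwarz symmetry noted after Definition~\ref{def:mixed}.

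The direction $(3) \Rightarrow (1)$ is the main obstacle, as one must globalize a pointwise second-order condition into an inequality involving evaluations on finite Minkowski sums. Since $\mathcal{A}$ is closed under sums and contains $\mathcal{B}, \mathcal{C}$, the set $A + sB + tC$ lies in $\mathcal{A}$ for every $s, t \geq 0$, so applying (3) with this set in place of $A$ gives $\mu(A + sB + tC;\, B, C) \geq 0$ throughout $[0,1]^2$. My plan is to integrate this twice. Fix $s \in [0, 1]$ and set $h_s(t) := \mu(A + sB + tC; B)$; by \eqref{eq:limit_def_alt} applied at the basepoint $A + sB + tC$, the right-derivative of $h_s$ at each $t_0 \in [0,1)$ equals $\mu(A + sB + t_0 C;\, C, B) \geq 0$. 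Assuming continuity of $h_s$ in $t$ (which follows from continuity of the function $t \mapsto \mu(A + sB + tC)$ on nested Minkowski sums for Radon measures), this one-sided-derivative monotonicity yields $h_s(1) \geq h_s(0)$. Setting $g(s) := \mu(A + sB + C) - \mu(A + sB)$ and repeating the argument, I compute the right-derivative $g'(s)_+ = h_s(1) - h_s(0) \geq 0$ and conclude $g(1) \geq g(0)$, which is exactly the inequality in (1). The delicate step is the passage from non-negative one-sided derivatives to monotonicity, which requires continuity of $h_s$ and $g$; verifying this under the stated hypotheses is the only technical point that needs care.
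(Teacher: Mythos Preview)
Your proof is correct and follows essentially the same route as the paper: both argue $(1)\Rightarrow(2)\Rightarrow(3)$ by passing to limits in dilated sums, and globalize $(3)\Rightarrow(1)$ by using that $\mu(A+sB+tC;B,C)\geq 0$ for all $s,t\geq 0$. The only cosmetic difference is that the paper packages the last step as a single double-integral identity $g(1,1)-g(1,0)-g(0,1)+g(0,0)=\int_0^1\int_0^1 \partial_s\partial_t g\,ds\,dt$, whereas you unfold it into two one-variable monotonicity steps; the paper likewise glosses over the regularity point you flag.
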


\begin{proof}
    Define the function $g_{A,B,C}(s,t)=g(s,t)=\mu(A+sB+tC).$ Since $\mu(A;B,C)$ and $\mu(A;B)$ exist, this implies that $g$ is twice differentiable and
    $$\frac{\partial}{\partial s }g(0,t)=\mu(A+tC;B) \quad \text{and} \quad \frac{\partial^2}{\partial s \partial t}g(0,0)=\mu(A;B,C).$$
    Suppose the first inequality holds. Then, by replacing $B$ and $C$ with $sB$ and $tC$ respectively in \eqref{eq:sup_mod}, we know that $g$ satisfies
    $$g(s,t) + g(0,0) \geq g(s,0) + g(0,t).$$
    This implies that
    $$\mu(A+tC;B)=\frac{\partial}{\partial s }g(0,t)\geq \frac{\partial}{\partial s }g(0,0)=\mu(A;B).$$
    Setting $t=1$ in the above inequality yields the second inequality. Moreover, this inequality then implies the third inequality $\mu(A;B,C)=\frac{\partial^2}{\partial s \partial t}g(0,0) \geq 0$. We now complete the loop. Suppose the third inequality holds, i.e. $\mu(A;B,C) \geq 0$ for every $A\in\mathcal{A}, B\in\mathcal{B}$ and $C\in\mathcal{C}$. Since $\mathcal{A}$ is closed under Minkowski summation and dilations, and $\mathcal{A}$ contains $\mathcal{B}$ and $\mathcal{C}$, we know that $\frac{\partial^2}{\partial s \partial t}g(s_0,t_0)=\mu(A+s_0B+t_0C;B,C) \geq 0$ for every $s_0,t_0 \geq 0.$ Thus, we get
    \[
g(s_0,t_0)-g(s_0,0)-g(0,t_0)+g(0,0)=\int_0^{s_0}\int_0^{t_0}\frac{\partial^2}{\partial s\partial t}g(s,t)dsdt \geq 0,
\]
    which is the first inequality by setting $t_0=s_0=1$.
 \end{proof}
Then, the following corollary is immediate from Theorem~\ref{t:Radon} and Proposition~\ref{p:sup_deriv_equiv}.
 \begin{cor}
     Let $\mu$ be a Radon measure on $\R^n$, $n\geq 1$ that is not a multiple of the Lebesgue measure, such that $\mu(A;B,C)$ exists for every triple of convex bodies $A,B$ and $C$ (e.g. $\mu$ has density). Then, for every $A$, there exist $B$ and $C$ such that
     $$\mu(A;B,C) <0.$$
 \end{cor}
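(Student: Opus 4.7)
The plan is to prove the corollary by contrapositive: assume there exists a convex body $A_0 \subset \R^n$ such that $\mu(A_0;B,C) \ge 0$ for every pair of convex bodies $B,C$, and show that $\mu$ must then be a constant multiple of the Lebesgue measure, contradicting the standing hypothesis. A preliminary mollification in the spirit of Proposition~\ref{p:sup_mod_shift}, replacing $\mu$ by $\mu_j = \mu \ast g_j$ with a smooth mollifier $g_j$, reduces to the case where $\mu$ has a $C^\infty$ density $\varphi$; the hypothesis passes (after a translation of $A_0$) to $\mu_j$, and a Lebesgue-multiple conclusion for each $\mu_j$ transfers to $\mu$ by weak convergence.

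The heart of the argument probes the bilinear form $(B,C) \mapsto \mu(A_0;B,C)$ with $B = K+x$ and $C = K+y$ for an arbitrary convex body $K$ and $x,y \in \R^n$. Writing $F(u,w) := \mu(A_0 + uK + w)$ and using $sB + tC = (s+t)K + sx + ty$ for $s,t \ge 0$, the chain rule yields
\[
\mu(A_0; K+x, K+y) = F_{uu}(0,0) + (x+y) \cdot \nabla_w F_u(0,0) + x^{\top} H\, y,
\]
where $H = \nabla^2[w \mapsto \mu(A_0+w)](0)$ depends only on $A_0$. Requiring the right-hand side to be $\ge 0$ for every $K$ and all $x,y$, the substitution $x = \lambda u$, $y = \pm \lambda u$ with $\lambda \to \infty$ traps $u^{\top} H u$ simultaneously $\ge 0$ and $\le 0$; invoking Schwarz symmetry $H = H^{\top}$ forces $H = 0$. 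The remaining linear-in-$(x+y)$ condition then forces $\nabla_w \mu(A_0 + w; K)|_{w=0} = 0$ for every convex body $K$. Combined with the boundary-integral representation $\mu(A;K) = \int_{\partial A} h_K(\nu)\, \varphi\, d\mathcal{H}^{n-1}$ from Appendix A and the density of differences of support functions in $C(\s^{n-1})$, this gives $\nabla \varphi \equiv 0$ on $\partial A_0$, while $H = 0$ gives $\int_{A_0} \nabla^2 \varphi = 0$.

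The final step upgrades these single-point/integrated zero-conditions to $\nabla^2 \varphi \equiv 0$ on $\R^n$, which makes $\varphi$ affine, hence constant since it is a nonnegative density, delivering $\mu = c \vol_n$ and closing the contradiction. The plan is to invoke Lemma~\ref{l:points} with $K = A_0$: the bilinear freedom in $(B,C)$ allows re-probing the hypothesis with bodies drawn from $\{ r A_0 + z : r > 0,\, z \in \R^n \}$, transferring the single-point Hessian identity at $w = 0$ to integral identities over every translate $r A_0 + z$; Lebesgue differentiation along the nicely-shrinking family $\{rA_0 + z\}_{r > 0}$ then recovers $\nabla^2 \varphi \equiv 0$ almost everywhere. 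This propagation is the main obstacle, since the hypothesis is nominally fixed at a single $A_0$; carrying it to all translates without extra regularity on $\mu$ requires combining the density of support functions with the smoothness of $\varphi$ gained from the preliminary mollification.
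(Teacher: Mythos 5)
The paper's own argument is a one-line contrapositive: if $\mu(A;B,C)\geq 0$ held for \emph{every} triple of convex bodies, then Proposition~\ref{p:sup_deriv_equiv} (with $\mathcal{A}=\mathcal{B}=\mathcal{C}$ the class of all convex bodies) upgrades the local inequality to supermodularity, and Theorem~\ref{t:Radon} then forces $\mu$ to be a constant multiple of the Lebesgue measure, contrary to hypothesis. This produces only the existence of \emph{some} triple $(A,B,C)$ with $\mu(A;B,C)<0$, which is precisely the version paraphrased in the introduction. You instead take the displayed ``for every $A$'' literally: you fix a single $A_0$ with $\mu(A_0;B,C)\geq 0$ for all $B,C$ and try to conclude that $\mu$ is a Lebesgue multiple. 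That is a strictly stronger implication and is not what the two cited results give: the (3.)$\Rightarrow$(1.) direction of Proposition~\ref{p:sup_deriv_equiv} requires $\mu(A+s_0B+t_0C;B,C)\geq0$ for all $s_0,t_0\geq0$, so non-negativity at the single body $A_0$ does not propagate.

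Two steps in your plan genuinely fail. The mollification does not preserve your hypothesis: unwinding the definitions gives $\mu_j(A_0;B,C)=\int_{\R^n}\mu(A_0-w;B,C)\,g_j(w)\,dw$, so you would need $\mu(A_0-w;B,C)\geq 0$ for $w$ throughout the support of the mollifier, not only at $w=0$; Proposition~\ref{p:sup_mod_shift} works precisely because its hypothesis is assumed for all shifts $z$. And the propagation from $A_0$ to $\{rA_0+z\}_{r,z}$, which you flag as the main obstacle, is left without an argument. These gaps are not fixable, because the fixed-$A_0$ implication is false: take $n=1$, $d\mu(x)=(1+\sin^2 x)\,dx$, $A_0=[0,\pi]$. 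With $B=[b_1,b_2]$, $C=[c_1,c_2]$ a direct computation gives $\mu(A_0;B,C)=b_2c_2\,\varphi'(\pi)-b_1c_1\,\varphi'(0)=0$ since $\varphi'(0)=\varphi'(\pi)=0$, so $\mu(A_0;B,C)\geq 0$ for every $B,C$ even though $\mu$ is not a Lebesgue multiple. The bilinear probing in the middle of your argument is correct and cleanly yields $H=0$ together with $\nabla_w\mu(A_0+w;K)|_{w=0}=0$, but these are local conditions at $A_0$ (the example above satisfies them) and cannot alone force global constancy of $\varphi$. The quantifier over $A$ in the statement must therefore be existential, as in the introduction, and the intended proof is simply the contrapositive of Theorem~\ref{t:Radon} via Proposition~\ref{p:sup_deriv_equiv}.
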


\subsection{Weaker forms of supermodularity}
\label{sec:weaksup}
This section is dedicated to proving the following theorem.
\begin{thm}\label{t:surarecom}
    For $n\geq 1$, let $\mu$ be a Radon measure on $\R^n$ such that, for every convex body $K$ and compact, convex set $L$, 
    $$\mu^+(\partial (K+L)) \geq \mu^+(\partial K).$$
    Then $\mu$ is a multiple of the Lebesgue measure.
\end{thm}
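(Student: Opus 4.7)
My plan has three steps: extract a translation-invariance property of the Minkowski content from the hypothesis, reduce to a measure with continuous density, then use spherical averaging to force that density to be constant.

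For the first step, set $L=\{y\}$, a singleton (which is a compact convex set), in the hypothesis. Since $\partial(K+\{y\})=\partial K+y$, the hypothesis reads $\mu^+(\partial(K+y))\ge \mu^+(\partial K)$ for every convex body $K$ and every $y\in\R^n$. Substituting $y\to -y$ and $K\to K+y$ yields the reverse inequality, so
\[
\mu^+(\partial(K+y)) \;=\; \mu^+(\partial K)
\]
for every convex body $K$ and every $y\in\R^n$; in other words, the Minkowski content $K\mapsto \mu^+(\partial K)$ is translation-invariant.

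Next I would reduce to the case where $\mu$ has a continuous density $\varphi$. By the remark following Theorem~\ref{t:surarecom} (via Proposition~\ref{p:sup_deriv_equiv}), the hypothesis is equivalent to the restricted supermodularity
\[
\mu(A+rB_2^n+C)+\mu(A) \;\ge\; \mu(A+rB_2^n)+\mu(A+C)
\]
for every convex body $A$, compact convex $C$, and $r>0$. Applied with $A$ replaced by every translate $A-z$, this is exactly the hypothesis of Proposition~\ref{p:sup_mod_shift}, whose Fubini argument then produces a sequence $\mu_j$ of Borel measures with $C^1$ densities $\varphi_j$ converging weakly to $\mu$ and satisfying the same restricted supermodularity. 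Since $\varphi_j$ is smooth, all the relevant mixed measures exist, and Proposition~\ref{p:sup_deriv_equiv} returns the Minkowski-content hypothesis for each $\mu_j$; in particular, the translation-invariance of the first step holds for $\mu_j$. This mollification step is the main obstacle: because the Minkowski content is only defined as a $\liminf$ for a general Radon measure, its compatibility with convolution is not transparent, so the detour through the supermodularity reformulation is essential here.

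Finally, for $\mu_j$ with continuous density $\varphi_j$ the identity $\mu_j^+(\partial K)=\int_{\partial K}\varphi_j\, d\mathcal{H}^{n-1}$ combined with translation-invariance gives
\[
\int_{\partial K}\varphi_j(x+y)\,d\mathcal{H}^{n-1}(x) \;=\; \int_{\partial K}\varphi_j(x)\,d\mathcal{H}^{n-1}(x)
\]
for every convex body $K$ and every $y\in\R^n$. Specializing to $K=rB_2^n$, the left-hand side equals the convolution $(\varphi_j*\sigma_r)(y)$, where $\sigma_r$ denotes the uniform surface measure on $r\s^{n-1}$; hence $\varphi_j*\sigma_r$ is a constant function of $y$ for every $r>0$. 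Dividing by $\mathcal{H}^{n-1}(r\s^{n-1})$ and letting $r\to 0^+$, the spherical averages converge pointwise to $\varphi_j$ by continuity, forcing $\varphi_j$ to equal a constant $c_j$. Since $\mu$ is Radon, the sequence $\{c_j\}$ is bounded, so up to a subsequence $c_j\to c$; combined with the weak convergence $\mu_j\to\mu$ exactly as in the last paragraph of the proof of Lemma~\ref{l:points}, this gives $\mu=c\cdot\vol_n$.
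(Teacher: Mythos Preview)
Your proof is correct and follows essentially the same approach as the paper: reduce to continuous density via Proposition~\ref{p:sup_mod_shift} and Proposition~\ref{p:sup_deriv_equiv}, extract translation-invariance of $K\mapsto\mu^+(\partial K)$ from the singleton choice $L=\{y\}$, and conclude by spherical averaging over $K=rB_2^n$ as $r\to 0$. The paper handles $n=1$ separately via Lemma~\ref{l:dim_1}, but your uniform treatment of all $n\ge 1$ is fine, since the spherical-average argument degenerates correctly to the two-point average $\tfrac{1}{2}(\varphi_j(y-r)+\varphi_j(y+r))\to\varphi_j(y)$ in dimension one.
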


\noindent We treat the $n=1$ case separately, where we will actually further restrict the admissible $L$.
\begin{lem}
\label{l:dim_1}
    Let $\mu$ be a Borel measure with continuous density on $\R$ such that, for every closed, compact interval $K$ and closed, compact interval $L$ containing $0$, one has
    \begin{equation}\mu^+(\partial (K+L)) \geq \mu^+(\partial K).
    \label{eq:with0}
    \end{equation}
    Then $\mu$ is a multiple of the Lebesgue measure.
\end{lem}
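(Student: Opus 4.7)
The plan is to unpack the hypothesis in dimension one and observe that it forces monotonicity in two opposite directions simultaneously.

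First I would identify $\mu^+(\partial K)$ explicitly for an interval. Let $\varphi$ denote the continuous density of $\mu$. For any closed interval $K=[a,b]$, using that $B_2^1=[-1,1]$, one has
$$\mu^+(\partial K) = \mu(K;B_2^1) = \lim_{\eps\to 0^+}\frac{\mu([a-\eps,b+\eps])-\mu([a,b])}{\eps} = \varphi(a)+\varphi(b),$$
where the limit exists genuinely (not just as a $\liminf$) because $\varphi$ is continuous.

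Next, every closed, compact interval $L$ containing $0$ can be written as $L=[-s,t]$ with $s,t\geq 0$. Then $K+L=[a-s,b+t]$, and by the same computation $\mu^+(\partial(K+L))=\varphi(a-s)+\varphi(b+t)$. The hypothesis therefore becomes: for every $a<b$ in $\R$ and every $s,t\geq 0$,
\begin{equation*}
\varphi(a-s)+\varphi(b+t) \;\geq\; \varphi(a)+\varphi(b).
\end{equation*}

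Now I would extract monotonicity by choosing extreme values. Setting $s=0$ gives $\varphi(b+t)\geq \varphi(b)$ for all $b\in\R$ and all $t\geq 0$, so $\varphi$ is non-decreasing on $\R$. Setting $t=0$ gives $\varphi(a-s)\geq \varphi(a)$ for all $a\in\R$ and all $s\geq 0$, so $\varphi$ is non-increasing on $\R$. A function that is simultaneously non-decreasing and non-increasing is constant, so $\varphi\equiv c$ for some $c\geq 0$, and hence $\mu=c\,\vol_1$. There is no real obstacle here: once one writes the hypothesis explicitly using the formula for $\mu^+$ on an interval, the two one-sided specializations collapse the problem immediately.
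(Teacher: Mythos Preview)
Your proof is correct and is essentially identical to the paper's own argument: compute $\mu^+(\partial[a,b])=\varphi(a)+\varphi(b)$, write $L=[-s,t]$ (the paper writes $L=[c,d]$ with $c\le 0\le d$), and then specialize to $s=0$ and $t=0$ separately to force $\varphi$ to be both non-decreasing and non-increasing, hence constant.
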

\begin{proof}
    Denote by $\varphi$ the density of $\mu$. Let $K=[a,b]$ and $L=[c,d]$, with $c\leq 0 \leq d$. From direct computation, we obtain $\mu^+(\partial K)=\phi(a)+\phi(b)$. Similarly, $\mu^+(\partial (K+L))=\phi(a+c)+\phi(b+d)$. Inserting into \eqref{eq:with0} and rearranging yields
    $$\phi(b+d)-\phi(b)\geq \phi(a)-\phi(a+c).$$
    Let $c=0$. Then, one obtains
    $\phi(b+d)\geq \phi(b)$. Since $d>c=0$, this means $\phi$ is an increasing function. On the other-hand, consider the case when $d=0$. Then, one obtains
$\phi(a+c)\geq \phi(a)$. But, since $c<d=0$, this means $a+c <a$ and thus $\phi$ is a decreasing function. Since $\phi$ is both increasing and decreasing, it must be a constant.
\end{proof}
To prove Theorem~\ref{t:surarecom} for higher dimensions, we note that, if $\mathcal{B}=\{rB_2^n\}_{r\geq 0}$ in Proposition~\ref{p:sup_deriv_equiv}, then item 2 in that proposition reads as $\mu^+(\partial (A+C))\geq \mu^+(\partial A).$
    
    \begin{proof}[Proof of Theorem~\ref{t:surarecom}]Our hypothesis is equivalent to the item 1 of Proposition~\ref{p:sup_deriv_equiv} holding for every $A,C$ convex bodies and $B\in \mathcal{B}=\{rB_2^n\}_{r\geq 0}$. By using Proposition~\ref{p:sup_mod_shift} like in the proof of Lemma~\ref{l:points}, we can suppose that $\mu$ has a continuous density $\varphi$ of $\mu$. Then, the $n=1$ case follows from Lemma~\ref{l:dim_1}.
    
For dimensions greater than $1$, fix arbitrary $x,y\in\R^n$ and let $L=\{x-y\}$ and replace $K$ with $K+y$. Then, by hypothesis we have that
    $$\mu^+(\partial K +x) \geq \mu^+(\partial K+y).$$
    By switching $x$ and $y$, we deduce that the function $x\mapsto \mu^+(\partial K+x)$ is constant. Now we consider the case $K=rB_2^n$ and send $r$ to $0$:
    \begin{equation}
    \frac{1}{r^{n-1}\vol_{n-1}(\s^{n-1})}\mu^+(r\s^{n-1}\!+x)=\frac{1}{\vol_{n-1}(\s^{n-1})}\!\int_{\s^{n-1}}\!\varphi(x+rz)dz\to \varphi(x)\;\hbox{when }r\!\to0.
    \label{eq:lebdiff}
    \end{equation}
 This means that $\phi$ is constant. The claim follows.
    \end{proof}

    \begin{rem}
    \label{r:fabian}
        By using \cite[Lemma 3.1.1]{Sch14:book}, one deduces that the functional $K \mapsto \mu^+(\partial K)$ is a valuation on the set of convex bodies for a Radon measure $\mu$. If one knows that $\mu$ has a density, then one can easily show it is a continuous valuation. In the proof of Theorem~\ref{t:surarecom}, we showed that a Radon measure with density that satisfies the hypothesis of the theorem is translation invariant.  Consequently, one can make the use of Lebesgue differentiation in \eqref{eq:lebdiff} more robust with the following argument from the theory of valuations: first, use McMullen's decomposition \cite[Theorem 6.3.5]{Sch14:book} to write $\mu^+(\partial K)=\sum_{j=1}^nM_j(K)$, where $M_j$ is a continuous, translation invariant, $j$-homogeneous valuation. Then, argue that the support of $\mu^+$ is $(n-1)$-dimensional, and thus $\mu^+ = M_{n-1}$. Lebesgue's differentiation theorem then yields $\phi(x) = \lim_{r\to 0^+} \frac{\mu^+(\partial rK+x)}{r^{n-1}\vol_{n-1}(\partial K)} = \frac{M_{n-1}(K)}{\vol_{n-1}(\partial K)}$, for almost all $x$.
    \end{rem}

    \begin{rem}
        Notice that Theorem~\ref{t:Radon} can also be proven as a corollary of Theorem~\ref{t:surarecom}.
    \end{rem}

One may have observed that, in the proof of Theorem~\ref{t:surarecom}, $L$ was not constrained to contain the origin, and thus $K+L$ may not necessarily contain $K$.  We ponder if we can extend Lemma~\ref{l:dim_1}, i.e. require the assumption that $L$ contains the origin, to higher dimensions and recover the classification of the Lebesgue measure from Theorem~\ref{t:surarecom}.
\begin{conj}
    \label{ques:strict}
        For $n\geq 2$, let $\mu$ be a Radon measure on $\R^n$ with the following property: for every convex body $K$ and compact, convex set $L$ such that $L$ contains the origin, 
        \begin{equation}\mu^+(\partial (K+L)) \geq \mu^+(\partial K).
        \label{eq:wsaa}
        \end{equation}
       Then $\mu$ is a constant multiple of the Lebesgue measure.
    \end{conj}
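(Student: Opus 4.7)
My plan is to localize the hypothesis via Proposition~\ref{p:sup_deriv_equiv}, reduce to smooth density via the convolution trick of Proposition~\ref{p:sup_mod_shift}, and then use an integral representation of $\mu(A;B_2^n,L)$ with $A$ a Euclidean ball to obtain, on $\s^{n-1}$, a density whose pairing against every origin-containing support function is non-negative. The main obstacle will be upgrading this integral inequality to a pointwise non-negativity statement, which would then quickly force $\nabla\phi\equiv 0$.

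\textbf{First steps.} Apply Proposition~\ref{p:sup_deriv_equiv} with $\mathcal{A}$ the class of convex bodies, $\mathcal{B}=\{rB_2^n\}_{r>0}$, and $\mathcal{C}$ the class of compact convex sets containing the origin. The hypothesis \eqref{eq:wsaa} is then equivalent to $\mu(K;B_2^n,L)\geq 0$ for every convex body $K$ and every $L\in\mathcal{C}$. The class $\mathcal{C}$ is preserved under the translations used in the proof of Proposition~\ref{p:sup_mod_shift} (which only translate the body $K$), so the same mollification argument yields a sequence of measures $\mu_j$ with $C^\infty$ densities $\phi_j$ satisfying the same local inequality, with $\mu_j\to\mu$ weakly. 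We may therefore assume that $\mu$ has a smooth density $\phi$, recovering the general Radon case at the end via the compactness argument used at the end of the proof of Lemma~\ref{l:points}.

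\textbf{Integral representation.} Using the formula for $\mu(A;B_2^n,L)$ recalled from \cite{FLMZ23_1} (Appendix~A) for smooth strictly convex $A$ and specialized to $A=B(x_0,R):=x_0+RB_2^n$, the inverse Gauss map is an affine function of $u$ and the Weingarten operator is a scalar multiple of the identity; after an integration by parts on $\s^{n-1}$, I expect the clean identity
\begin{equation*}
\mu\bigl(B(x_0,R); B_2^n, L\bigr) = R^{n-2}\int_{\s^{n-1}} h_L(u)\, \rho_{x_0,R}(u)\, dH^{n-1}(u),
\end{equation*}
where $\rho_{x_0,R}(u) := (n-1)\,\phi(x_0+Ru) + R\,\langle\nabla\phi(x_0+Ru),u\rangle$. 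Since $R^{n-2}>0$ for $R>0$ and $n\geq 2$, the localized hypothesis becomes: for every $L\ni 0$, every $x_0\in\R^n$, and every $R>0$,
\begin{equation*}
\int_{\s^{n-1}} h_L(u)\, \rho_{x_0,R}(u)\, dH^{n-1}(u) \geq 0.
\end{equation*}

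\textbf{Where I expect to fight.} The easy half is that pointwise non-negativity $\rho_{x_0,R}(u)\geq 0$ already forces $\phi$ to be constant: given $w,x_0\in\R^n$ with $w\neq x_0$, set $R=|w-x_0|$ and $u=(w-x_0)/R$ to get $(n-1)\phi(w)+\langle\nabla\phi(w),w-x_0\rangle\geq 0$; letting $x_0$ run along the ray $\{w+t\,\nabla\phi(w):t>0\}$ makes the second term arbitrarily negative as $t\to\infty$ unless $\nabla\phi(w)=0$. The hard part is deducing the pointwise inequality from the integral one, since the cone of support functions of origin-containing convex bodies does not a priori separate signs of signed continuous densities on $\s^{n-1}$. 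I plan to exploit the freedom to vary $(x_0,R)$: testing with $L=B(v_0,r_0)$ for $|v_0|\leq r_0$ yields the pointwise inequality $\partial_R\mu^+(\partial B(x_0,R))\geq |\nabla_{x_0}\mu^+(\partial B(x_0,R))|$, while testing with segments $L=[0,v]$ yields complementary half-space integrals. At a hypothetical point $w_0$ with $\nabla\phi(w_0)\neq 0$, choosing $x_0=w_0+\alpha\,\nabla\phi(w_0)/|\nabla\phi(w_0)|$ with $\alpha$ large makes the sphere $\partial B(x_0,\alpha)$ tangent to the level set of $\phi$ at $w_0$, concentrating the negative part of $\rho_{x_0,\alpha}$ near the single direction $u^*=-\nabla\phi(w_0)/|\nabla\phi(w_0)|$ while reasonable decay of the mollified $\phi_j$ away from $w_0$ controls the positive contribution elsewhere. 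A careful quantitative execution of this concentration-compactness scheme, combined with the half-space conditions, should contradict the integral inequality for an appropriately chosen $L$, and thereby force $\nabla\phi\equiv 0$.
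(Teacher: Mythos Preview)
This statement is labeled as a \emph{Conjecture} in the paper; it is not proved there, and indeed the surrounding discussion (Theorem~\ref{t:contr}) supplies a structural reason why naive approaches fail. So there is no ``paper's proof'' to compare against --- the question is whether your plan could actually close the conjecture.

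There is a concrete obstruction to your proposed test functions. Every body $L$ you suggest testing with --- segments $[0,v]$ and Euclidean balls $B(v_0,r_0)$ with $|v_0|\le r_0$ --- is symmetric about a point and contains the origin. But Theorem~\ref{t:contr} (which is fully proved in the paper, via the inequality in Appendix~B) shows that in $\R^2$ the measure with density $\phi(x)=|x|^2$ satisfies $\mu^+(\partial(K+L))\ge \mu^+(\partial K)$ for \emph{every} convex body $K$ and every compact convex $L$ that contains the origin and is symmetric about a point. Consequently this non-Lebesgue measure passes \emph{all} of your proposed tests, so no combination of them --- and no ``concentration--compactness scheme'' built on them --- can force $\nabla\phi\equiv 0$. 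Your integral inequality $\int_{\s^{n-1}} h_L(u)\rho_{x_0,R}(u)\,du\ge 0$ is genuinely weaker than pointwise non-negativity of $\rho_{x_0,R}$; indeed for $\phi(x)=|x|^2$ one computes $\rho_{x_0,R}(u)=(n+1)|x_0+Ru|^2 - 2|x_0|^2$, which is strictly negative near $u=-x_0/|x_0|$ whenever $R$ is small compared to $|x_0|$.

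Any successful attack on the conjecture must therefore exploit test bodies $L$ that are \emph{not} symmetric about any point (and hence, in $\R^2$, are not zonoids). Your current plan does not identify such $L$, and the heuristic in your final paragraph --- making the negative part of $\rho$ concentrate near a single direction --- cannot by itself produce a contradiction, since the same concentration occurs for the $|x|^2$ measure without violating the integral inequality for any origin-containing symmetric $L$.
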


We now weaken the assumptions further by assuming that $L$, while still containing the origin, is symmetric about a point. We obtain in Theorem~\ref{t:contr} that, in this case, a characterization of the Lebesgue measure like the one shown in Theorem~\ref{t:surarecom} does not hold. We will focus on the planar case, and make use of the fact that every symmetric planar convex body is a zonoid. A \textit{centered zonotope} $Z$ is the Minkowski sum of symmetric line-segments, i.e. it can be written in the form
\begin{equation}
    Z=\sum_{i=1}^ma_i[-u_i,u_i], \; u_i\in\s^{n-1}, \; a_i\in(0,\infty),
    \label{eq:zon}
\end{equation}
where $\s^{n-1}$ is the unit sphere in $\R^n.$ Furthermore, a \textit{centered zonoid} is the limit, with respect to the Hausdorff metric, of a sequence of centered zonotopes; $\mathcal{Z}^n$ denotes the set of centered zonoids in $\R^n.$ A zonoid (resp. zonotope) is merely a translation of a centered zonoid (resp. zonotope). 

To prove Theorem~\ref{t:contr}, we first consider the case when $L$ is a line segment. We establish that, when $\mu$ has density $\phi((x_1,x_2))=x_1^2+x_2^2$, then the property holds.

    \begin{prop}
    \label{p:counter}
        Let $K$ be a convex body in $\R^2$. Let $\mu$ be the Borel measure with density $\phi(x)=|x|^2$. Then, for every $u\in\R^2$,
        $$\mu^+(\partial(K+[0,u])) \geq \mu^+(\partial K).$$
    \end{prop}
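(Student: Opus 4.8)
The plan is to reduce the statement to a one-dimensional inequality by computing the $\mu$-Minkowski content $\mu^+(\partial K)$ explicitly as a boundary integral, and then track how it changes when we add a segment $[0,u]$. Recall from the definition $\mu^+(\partial K) = \mu(K;B_2^2)$, and for a measure with continuous density $\phi$, one has the boundary integral representation
\[
\mu^+(\partial K) = \int_{\partial K} \phi(x)\, d\mathcal{H}^1(x),
\]
where $\mathcal{H}^1$ is one-dimensional Hausdorff measure (arclength). So the claim becomes
\[
\int_{\partial(K+[0,u])} |x|^2\, d\mathcal{H}^1(x) \;\geq\; \int_{\partial K} |x|^2\, d\mathcal{H}^1(x).
\]
By rotation invariance of the density $|x|^2$, we may assume $u = (0,h)$ with $h \geq 0$, i.e.\ the segment is vertical.

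First I would describe the boundary of $K + [0,u]$ geometrically: it is obtained from $\partial K$ by ``cutting'' $K$ along the two vertical supporting lines $x_1 = \min_{K} x_1$ and $x_1 = \max_K x_1$ (more precisely at the leftmost and rightmost points, or the two vertical edges if they exist), sliding the right piece up by $h$, and joining the two pieces by two vertical segments of length $h$ — one on the left part of the boundary, one on the right. Thus $\partial(K+[0,u])$ consists of: (i) a translated-up copy of the ``upper-right'' arc of $\partial K$, (ii) an unchanged copy of the ``lower-left'' arc of $\partial K$, and (iii) two new vertical segments of length $h$. The contributions of the two new vertical segments are manifestly non-negative, so the heart of the matter is to compare the integral of $|x|^2$ over the shifted arc against the integral over the original arc. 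Parametrize the relevant upper arc as $x_1 \mapsto (x_1, g(x_1))$ for $x_1$ ranging over $[x_{\min}, x_{\max}]$, where $g$ is the ``upper'' boundary function (concave); shifting up by $h$ replaces $g$ by $g+h$ while leaving the arclength element $\sqrt{1 + g'(x_1)^2}\,dx_1$ unchanged. Hence the change in that piece is
\[
\int_{x_{\min}}^{x_{\max}} \Big( |(x_1, g(x_1)+h)|^2 - |(x_1,g(x_1))|^2\Big)\sqrt{1+g'(x_1)^2}\,dx_1 = \int_{x_{\min}}^{x_{\max}} \big(2h\, g(x_1) + h^2\big)\sqrt{1+g'(x_1)^2}\,dx_1.
\]
This is non-negative provided $g \geq 0$ on the arc, i.e.\ provided the relevant upper arc of $K$ lies in the upper half-plane — which need not hold for an arbitrary convex body.

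The main obstacle, therefore, is that $K$ is completely arbitrary and may straddle or lie below the $x_1$-axis, so the naive ``slide up'' comparison can lose terms. To handle this I would exploit the freedom in \emph{which} horizontal splitting line to use, or equivalently reduce to the worst case by a symmetrization/translation argument in the vertical direction: since $|x|^2 = x_1^2 + x_2^2$ and the operation $K \mapsto K + [0,u]$ commutes with vertical translation, it suffices to prove the inequality after translating $K$ vertically to the position that minimizes $\mu^+(\partial K)$ (if such a minimizer exists) — or, more robustly, to show that the function $c \mapsto \mu^+(\partial(K + (0,c)))$ is convex in $c$ with the one-sided derivative identity matching the segment-addition increment. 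Concretely, writing $F(c) = \int_{\partial K} |x + (0,c)|^2\, d\mathcal{H}^1 = \int_{\partial K}(x_1^2 + (x_2+c)^2)\,d\mathcal{H}^1$, this is a quadratic polynomial in $c$ with positive leading coefficient $\mathcal{H}^1(\partial K)$, hence convex, and one computes that adding the segment $[0,(0,h)]$ increases $\mu^+$ by exactly the ``upper arc shift'' term above \emph{plus} the two vertical edge terms; comparing with $F(h) + F(0)$-type expressions and using convexity of $F$ together with $F'(0) = 2\int_{\partial K} x_2\, d\mathcal{H}^1$ should close the argument. I expect the cleanest route is: split $\partial K$ at the two points where the outer normal is horizontal, note the new boundary's squared-norm integral equals $\int_{\text{lower arc}} |x|^2 + \int_{\text{upper arc}} |x + (0,h)|^2 + (\text{vertical edges} \geq 0)$, then observe $\int_{\text{upper arc}}(|x+(0,h)|^2 - |x|^2)\,d\mathcal{H}^1 = 2h\int_{\text{upper arc}} x_2\, d\mathcal{H}^1 + h^2\,\mathcal{H}^1(\text{upper arc})$, and finally verify $\int_{\text{upper arc}} x_2\, d\mathcal{H}^1 \geq \int_{\text{lower arc}} x_2\, d\mathcal{H}^1$ (the upper arc sits above the lower arc pointwise in $x_1$), which together with $h^2 \geq 0$ and the vertical edges gives a non-negative total change even when $K$ dips below the axis. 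The one genuinely delicate point is making the ``split at horizontal normals'' decomposition rigorous for a general (possibly non-smooth, non-strictly-convex) planar convex body, which I would do by approximating $K$ by smooth strictly convex bodies and passing to the limit, using continuity of $K \mapsto \mu^+(\partial K)$ for measures with continuous density.
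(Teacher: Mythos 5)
Your setup matches the paper's: reduce to $u=te_2$ by rotation invariance, decompose $\partial(K+[0,te_2])$ into the unchanged lower graph, the upper graph shifted up by $t$, and two lengthened vertical edges, and compute the increment of $\int_{\partial\cdot}|x|^2\,d\mathcal{H}^1$. But the proposal breaks down exactly at the point you flag as "the main obstacle." The lower arc contributes \emph{zero} to the increment (it is unchanged), so your proposed key step --- verifying $\int_{\text{upper arc}} x_2\,d\mathcal{H}^1 \geq \int_{\text{lower arc}} x_2\,d\mathcal{H}^1$ --- is a non sequitur: the lower-arc integral never enters the computation, and that comparison cannot control the sign of $2t\int_{\text{upper}} x_2\,d\mathcal{H}^1$ when the upper arc lies below the axis. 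Likewise, convexity of $c\mapsto F(c)$ gives no lower bound on the increment. What is actually needed, after dividing by $t$ and letting $t\to 0^+$, is
\begin{equation*}
a^2+b^2+f(a)^2+f(b)^2+2\int_a^b f(z)\sqrt{1+(f'(z))^2}\,dz\;\geq\;0
\end{equation*}
for every concave $f$ on $[a,b]$ (the endpoint terms come from the vertical edges and must compensate the possibly negative weighted-arclength term). Equivalently, writing $h=-f_-$, one must show $a^2+b^2+h(a)^2+h(b)^2\geq 2\int_a^b h\sqrt{1+(h')^2}$ for every non-negative convex $h$.

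This last inequality is the genuine mathematical content of the proposition, and it is not at all routine: the paper devotes the whole of Appendix B (Proposition \ref{prop:fedja-cvx}, due to Nazarov) to it, using a decomposition of $\sqrt{1+(h')^2}$ as $h'+\bigl(h'+\sqrt{1+(h')^2}\bigr)^{-1}$, a Chebyshev-type monotone rearrangement bound, the mean value theorem, and a quadratic optimization closed by AM--GM. Your proposal contains no substitute for this step, so as written it does not prove the statement. The surrounding reductions (localization to $\mu(K;B_2^2,[0,e_2])\geq 0$, the graph parametrization, approximation by smooth bodies) are all fine and coincide with the paper's.
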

    \begin{proof}
        Clearly, the inequality is true for $u$ the origin, and so we suppose $|u|>0$. From rotation invariance, we may assume $u=(0,t)=te_2$, $t>0$. In the same spirit as in Proposition~\ref{p:sup_deriv_equiv}, we localize the result. We compute
        $$\mu(K;B_2^n,[0,e_2])=\lim_{t\to 0^+} \frac{\mu^+(\partial(K+[0,te_2])) - \mu^+(\partial K)}{t},$$ i.e. we will show $\mu(K;B_2^n,[0,e_2]) \geq 0$ for every planar convex body, which is equivalent to our claim via a localization argument. Observe that by denoting by $\mathcal{H}^1$ the $1$-dimensional Hausdorff measure, we can write
        \begin{equation}\frac{\mu^+(\partial(K+[0,te_2])) - \mu^+(\partial K)}{t} = \frac{1}{t}\left( \!\int_{\partial(K+[0,te_2])}\phi(x)d\mathcal{H}^{1}(x)-\!\int_{\partial K}\phi(x)d\mathcal{H}^{1}(x)\right).
        \label{eq:boundary_int_diff}
        \end{equation}
        
        Next, we notice that $K=\{x\in\R^2: x_1\in[a,b], g(x_1) \leq x_2 \leq f(x_1)\}$, where $f$ is a one-dimensional concave function, $g$ is a one-dimensional convex function, both defined on $[a,b] \subset \R$. This means that $K+[0,te_2]=\{x\in\R^2: x_1\in[a,b], g(x_1) \leq x_2 \leq f(x_1)+t\}$. With this parameterization, observe that $\partial K$ can be written as
        $$
        \{(x,f(x)): x\in[a,b]\}\bigcup \{(x,g(x)): x\in [a,b]\}\bigcup (a \times [g(a),f(a)]) \bigcup (b\times [g(b),f(b)]),$$
        and thus $\partial (K+[0,te_2])$ can be represented as
        $$\{(x,f(x)+t): x\in[a,b]\}\bigcup \{(x,g(x)): x\in [a,b]\}\bigcup (a \times [g(a),f(a)+t]) \bigcup (b\times [g(b),f(b)+t]).
        $$
        Consequently,  \eqref{eq:boundary_int_diff} can be written as $\frac{1}{t}(\delta_f(a)+\delta_f(b)+L(f+t)-L(f))$, where
        $$\delta_f(a)= \int_{f(a)}^{f(a)+t} \phi(a,z) dz \mbox{ and  } 
        \delta_f(b)= \int_{f(b)}^{f(b)+t} \phi(b,z) dz,
       $$
       and $L(f)$ is the arc-length of $f$ over $[a,b]$ averaged with respect to $\phi$, i.e. $$L(f)=\int_a^b\phi(z,f(z))\sqrt{1+(f^\prime(z))^2}dz \mbox { and 
 } L(f+t)=\int_a^b\phi(z,f(z)+t)\sqrt{1+(f^\prime(z))^2}dz.$$
        Taking limits, we obtain $$\lim_{t\to0^+}\frac{\delta_f(a)}{t}=\phi(a,f(a))=a^2+f(a)^2$$
        from the Lebesgue differentiation theorem, and similarly for $\delta_f(b)$. One obtains from dominated convergence that
        \begin{align*}\lim_{t\to 0^+}\frac{L(f+t)-L(f)}{t}&=\int_a^b \pdv{}{y}\phi(z,f(z))\sqrt{1+(f^\prime(z))^2}dz
        \\
        &=2\int_a^b f(z)\sqrt{1+(f^\prime(z))^2}dz.\end{align*}
        Putting all the pieces together, we must show for every concave function $f:[a,b]\to \R$ the following inequality:
        $$a^2+b^2+f(a)^2+f(b)^2 + 2 \int_a^b f(t)\sqrt{1+(f^\prime(t))^2}dt \geq 0.$$
        We first observe that, if we establish the inequality for $$f_-(z)=\begin{cases}
      0 & f(z) \geq 0\\
      f(z) & f(z) <0,\end{cases}$$
      then we establish the inequality for $f$. Hence, we may suppose $f$ is non-positive. We then let $h=-f$. Consequently, our goal is establish the following: given $[a,b]\subset \R$, we must show for every non-negative function $h$ convex over $[a,b]$ that
      \begin{equation}
      \label{eq:convex_inequality}a^2+b^2+h(a)^2+h(b)^2 \geq 2 \int_a^b h(z)\sqrt{1+(h^\prime(z))^2}dz,\end{equation}
      which is established in Appendix B.
      \end{proof}
By iterating Proposition~\ref{p:counter} and using monotone convergence, we obtain the following.
    \begin{thm}\label{t:contr}
    Let $K$ be a convex body in $\R^2$ and let $\mu$ be the measure with density $|(x_1,x_2)|^2=x_1^2+x_2^2$. Then, for every compact, convex set $L\subset \R^2$ that contains the origin and is symmetric about a point:
        $$\mu^+(\partial(K+L)) \geq \mu^+(\partial K).$$
    \end{thm}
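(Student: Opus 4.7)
The plan is to reduce to the line-segment case handled by Proposition~\ref{p:counter} by expressing $L$ as a limit of Minkowski sums of origin-starting segments and then iterating. Since every planar centrally-symmetric compact convex body is a zonoid (as recalled before \eqref{eq:zon}) and $L$ being symmetric about $c$ makes $M:=L-c$ origin-symmetric, there exist centered zonotopes $Z_k = \sum_{i=1}^{N_k} a_i^{(k)}\,[-u_i^{(k)},u_i^{(k)}]$ with $Z_k\to M$ in Hausdorff metric. The hypothesis $0\in L$ gives $-c\in M$, and central symmetry of $M$ then yields $c\in M$; this is the crucial feature that powers the decomposition below.

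Given this, since $c\in M$ and $Z_k\to M$ in Hausdorff metric, there exist $c_k\in Z_k$ with $c_k\to c$. Writing $c_k = \sum_i \mu_i^{(k)} a_i^{(k)} u_i^{(k)}$ with $\mu_i^{(k)}\in[-1,1]$ gives
\begin{equation*}
L_k \;:=\; c_k + Z_k \;=\; \sum_{i=1}^{N_k} \bigl(\mu_i^{(k)} a_i^{(k)} u_i^{(k)} + a_i^{(k)}[-u_i^{(k)},u_i^{(k)}]\bigr) \;=\; \sum_{i=1}^{N_k} \bigl[(\mu_i^{(k)}-1)a_i^{(k)}u_i^{(k)},\,(\mu_i^{(k)}+1)a_i^{(k)}u_i^{(k)}\bigr].
\end{equation*}
Since $\mu_i^{(k)}\in[-1,1]$, each segment in this Minkowski sum contains the origin. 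Every origin-containing segment $[-\alpha v,\beta v]$ with $\alpha,\beta\geq 0$ further decomposes as $[0,\beta v]+[0,-\alpha v]$, so $L_k$ is realized as a Minkowski sum of $2N_k$ origin-starting segments.

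Iterating Proposition~\ref{p:counter} along the telescoping chain of partial Minkowski sums yields $\mu^+(\partial(K+L_k))\geq \mu^+(\partial K)$ for every $k$. Since $L_k\to L$ in Hausdorff metric (as $c_k\to c$ and $Z_k\to M$), the convex bodies $K+L_k$ converge to $K+L$ in Hausdorff metric, and continuity of $K'\mapsto \mu^+(\partial K')$ under Hausdorff convergence of convex bodies (justified by the continuous density $|x|^2$ via $\mu^+(\partial K')=\int_{\partial K'}|x|^2\,d\mathcal{H}^1$ and the weak convergence of surface area measures) yields $\mu^+(\partial(K+L))\geq \mu^+(\partial K)$. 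The main obstacle is the decomposition step: it is essential that $c\in M$, which uses both the symmetry of $L$ about $c$ and the hypothesis $0\in L$, in order to ``absorb'' the translation $c$ into shifts of the individual zonotope segments while keeping each of them origin-containing.
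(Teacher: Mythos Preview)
Your proof is correct and follows essentially the same approach as the paper: reduce to Proposition~\ref{p:counter} by writing $L$ (up to Hausdorff approximation) as a finite Minkowski sum of segments of the form $[0,v]$, iterate, and pass to the limit by continuity of $K'\mapsto\mu^+(\partial K')$. Your inline decomposition argument---picking $c_k\in Z_k$ with $c_k\to c$ and absorbing the translation into the individual segments---is precisely the content of the paper's Proposition~\ref{p:zon_ori}, carried out together with the approximation step rather than stated separately.
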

    To justify this iteration, we first note that $\mu^+(\partial(K+L))$ is continuous in the variable $L$ (with respect to the Hausdorff metric) over the class of zonoids. Indeed, Proposition~\ref{prop:arb_mixed} reduces the question to continuity of $S^\mu_{K+L}$ in the variable $L$, which was shown in \cite[the proof of Theorem 2.7]{FLMZ24:1}. We next note that every compact, convex set $L$ that is symmetric about a point and contains the origin, e.g. zonoids containing the origin, can be approximated by zonotopes containing the origin. Thus, to show that the set of convex bodies generated by Minkowski sums of the form $[0,v]$ for $v\in\R^n$ coincides to the set of zonoids containing the origin, it suffices to show only the case of zonotopes.
    \begin{prop}
    \label{p:zon_ori}
        A set $Z\subset\R^n$, $n\geq 1$, is a zonotope containing the origin if and only if it can be written as $Z=\sum_{i=1}^M [0,v_i]$ for a sequence $\{v_i\}_{i=1}^M\subset\R^n\setminus\{0\}$.
    \end{prop}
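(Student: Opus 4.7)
The plan is to prove the two implications separately, starting with the easier one.

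For the ``if'' direction, suppose $Z=\sum_{i=1}^M[0,v_i]$ with $v_i\neq 0$. The origin lies in $Z$ trivially by choosing $0\in[0,v_i]$ for each $i$. To see that $Z$ is a zonotope, rewrite each segment as $[0,v_i]=\tfrac{1}{2}v_i+\tfrac{|v_i|}{2}[-u_i,u_i]$ where $u_i=v_i/|v_i|\in\s^{n-1}$. Summing, $Z=\tfrac{1}{2}\sum_i v_i+\sum_i \tfrac{|v_i|}{2}[-u_i,u_i]$, which is precisely a translate of a centered zonotope, i.e.\ a zonotope.

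For the ``only if'' direction, let $Z$ be a zonotope containing the origin. By definition, $Z=z_0+\sum_{i=1}^m a_i[-u_i,u_i]$ for some $z_0\in\R^n$, $u_i\in\s^{n-1}$, and $a_i>0$. The assumption $0\in Z$ means there exist $s_i\in[-1,1]$ with $z_0+\sum_i s_i a_i u_i=0$. Set $t_i=s_ia_i\in[-a_i,a_i]$; then $z_0=-\sum_i t_i u_i$, and substituting gives
$$Z=\sum_{i=1}^m\bigl(a_i[-u_i,u_i]-t_iu_i\bigr)=\sum_{i=1}^m\bigl[-(a_i+t_i)u_i,\,(a_i-t_i)u_i\bigr],$$
where I have absorbed the translation into the individual segments. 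Crucially, $a_i+t_i\geq 0$ and $a_i-t_i\geq 0$, so each summand is a segment along the direction $u_i$ that contains the origin.

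It remains to split each such segment into two segments emanating from the origin. The key identity is that for any $p,q\geq 0$ and $u\in\s^{n-1}$,
$$[-pu,qu]=[0,-pu]+[0,qu],$$
which is immediate since $\{-sp+tq:s,t\in[0,1]\}=[-p,q]$. Applying this to each summand, we obtain $Z=\sum_{i=1}^m\bigl([0,-(a_i+t_i)u_i]+[0,(a_i-t_i)u_i]\bigr)$. Since $a_i>0$, we cannot have both $a_i+t_i=0$ and $a_i-t_i=0$, so after discarding any trivial singleton $\{0\}$ summands (which act as the identity for Minkowski addition) we are left with $Z=\sum_{j=1}^M[0,v_j]$ with each $v_j\in\R^n\setminus\{0\}$, as required. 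The main ``obstacle'' is purely bookkeeping, namely using $0\in Z$ to absorb the translation $z_0$ into the segments in a way that each resulting segment straddles the origin; everything else is the elementary collinear-segment decomposition above.
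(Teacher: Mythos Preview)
Your proof is correct and follows essentially the same approach as the paper's: use $0\in Z$ to absorb the translation so that each constituent segment contains the origin, then split each such segment $[a,b]\ni 0$ as $[a,0]+[0,b]$. Your version is simply more explicit in its bookkeeping (parametrizing via $a_i,u_i,t_i$ and handling the ``if'' direction and the discarding of degenerate summands), whereas the paper works with abstract segments $I_i$ and points $x_i\in I_i$ summing to zero.
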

    \begin{proof}
    Let $I_i, i=1,\dots,M$ be line segments in $\R^n.$ Then, $Z=\sum_{i=1}^MI_i$ contains the origin if and only if there exist $x_i\in I_i$ such that $\sum_{i=1}^Mx_i=0.$ Thus, we have
    \begin{align*}
        Z=\sum_{i=1}^MI_i =\sum_{i=1}^MI_i - \sum_{i=1}^Mx_i = \sum_{i=1}^M(I_i-x_i).
    \end{align*}
    Observe that each $I_i-x_i$ is now a line segment containing the origin. Moreover, consider a line segment $[a,b]$ containing the origin. Then $[a,b]=[a,0]+[0,b]$, thus $[a,b]$ is the Minkowski sum of two line segments containing the origin as endpoints.
    \end{proof}

    Dominique Malicet asked if it was possible to strengthen Theorem~\ref{t:contr} by replacing $B_2^n$ in $\mu^+(\partial A)=\mu(A;B_2^n)$ with an arbitrary symmetric convex body. In Theorem~\ref{thm:strict_2} below, we show that this is not true. In fact, we show that a Radon measure satisfying these requirements must again be the Lebesgue measure. In the Appendix A, Section~\ref{app:rmk}, we recall the prequel to this work, \cite{FLMZ24:1}. In that work, we had found a formula for $\mu(A;B,C)$, which we recall in Theorem~\ref{t:best_thm}. In the following proposition, we derive a particular case of $\mu(A;B,C)$ which will be needed in the proof of Theorem~\ref{thm:strict_2}.
\begin{prop}
    Let $A$ be a convex body and $C$ and a compact, convex set, both in $\R^n$. Suppose $\mu$ is a Borel measure on $\R^n$ with $C^1$ density $\varphi$. Then, for every $v\in\R^n\setminus\{0\}$, the following holds.  If $n\geq 3$:
    \begin{equation}
    \label{eq:sym_seg_0}
     \begin{split}
        \frac{1}{|v|}\mu(A;[0,v],C) =(n-1)&\int_{\{y\in \partial A: \langle n_A(y),v\rangle=0\} }h_C(n_A(y))\phi(y)dy 
        \\
        &+\int_{\{y\in \partial A:\langle n_A(y),v \rangle \geq 0\}}h_C(n_A(y))\left\langle\nabla \phi(y),\frac{v}{|v|}\right\rangle dy.
    \end{split}
    \end{equation}
    and, if $n=2$:
    \begin{equation}
    \label{eq:sym_seg}
    \begin{split}
        \frac{1}{|v|}\mu(A;[0,v],C) = h_C(-v^\perp)&\mu^+(\{y\in \partial A: n_A(y)=-v^\perp\}) 
        \\
        &+ h_C(v^\perp)\mu^+(\{y\in \partial A: n_A(y)=v^\perp\})
        \\
        &\quad +\int_{\{y\in \partial A:\langle n_A(y),v \rangle \geq 0\}}h_C(n_A(y))\left\langle\nabla \phi(y),\frac{v}{|v|} \right\rangle dy,
    \end{split}
    \end{equation}
    where $\pm v^\perp$ denote the two unit vectors in $\s^1$ that are orthogonal to $v$.
    \label{p:sym_seg}
\end{prop}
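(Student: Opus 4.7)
The plan is to specialize the general formula for $\mu(A;B,C)$ of Theorem~\ref{t:best_thm} to the choice $B=[0,v]$, and to exploit two defining features of the segment. First, its support function $h_{[0,v]}(u)=\langle u,v\rangle_+$ vanishes on $\{u:\langle u,v\rangle<0\}$, which forces every $h_{[0,v]}$-weighted boundary integral produced by Theorem~\ref{t:best_thm} to collapse onto the lit region $\{y\in\partial A:\langle n_A(y),v\rangle\ge 0\}$; this explains the domain of the gradient integral in the conclusion. Second, the classical surface area measure $S_{[0,v]}$ is concentrated on the equator $v^\perp\cap\s^{n-1}$ with total mass proportional to $|v|$; this accounts for the concentration of the remaining contribution on the silhouette $\{y\in\partial A:\langle n_A(y),v\rangle=0\}$.

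At the concrete level, I would work from $\mu(A;[0,v],C)=\partial_s\mu(A+s[0,v];C)\big|_{s=0}$ and expand $\mu(A+s[0,v];C)$ through the first-order formula recalled in Appendix~A, namely $\mu(K;C)=\int_{\partial K}h_C(n_K(y))\varphi(y)\,d\mathcal{H}^{n-1}(y)$. For small $s>0$ the boundary of $A+s[0,v]$ splits cleanly into three pieces: the unlit part $\{\langle n_A(y),v\rangle< 0\}$ (unchanged), the lit part translated by $sv$ (same normals), and a lateral prism swept by the silhouette, whose normals lie in $v^\perp$. Differentiating at $s=0$ annihilates the unlit piece; on the translated lit piece, $\partial_s\varphi(y+sv)|_{s=0}=\langle\nabla\varphi(y),v\rangle$ yields, after dividing by $|v|$, the last line of each of the formulas; and the lateral piece reduces, via the area formula applied to the parametrization $(y,\tau)\mapsto y+\tau v$, to the silhouette term. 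In dimension $n\ge 3$ the silhouette is generically $(n-2)$-dimensional, and the Jacobian of the prism produces the factor $(n-1)$ together with the weight $h_C(n_A(y))\varphi(y)$; in dimension $n=2$ the equator degenerates to the pair $\{\pm v^\perp\}$, the lateral contribution becomes purely atomic, and one recovers the discrete terms $h_C(\pm v^\perp)\mu^+(\{y\in\partial A:n_A(y)=\pm v^\perp\})$.

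The main obstacle will be handling the lateral-prism term in full generality, since $A$ is only assumed to be a convex body without smoothness, and its silhouette may contain flat facets of $\partial A$ whose normals lie in $v^\perp$, making the elementary area-formula manipulation delicate. The cleanest way around this is to remain at the level of the surface area measure framework encoded in Theorem~\ref{t:best_thm}, where the identity reduces to a computation against a measure-valued functional on $\s^{n-1}$ attached to $A$ and $[0,v]$ that is entirely governed by $S_{[0,v]}$ and automatically accommodates facets and creases without case analysis. The dichotomy between $n=2$ and $n\ge 3$ then mirrors precisely the distinction between $\mathcal{H}^{n-2}$ on $v^\perp\cap\s^{n-1}$ and the counting measure on the two-point equator in the planar case, which is exactly the source of the different formulas asserted in the statement.
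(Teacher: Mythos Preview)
Your proposal is correct and follows essentially the same route as the paper: specialize Theorem~\ref{t:best_thm} with $B=[0,v]$, use that $\nabla h_{[0,v]}(u)=v\cdot\chi_{\{\langle u,v\rangle\ge 0\}}$ a.e.\ to collapse the gradient term onto the lit region, and use the structure of the mixed surface area measure $S_{A[n-2],[0,v]}$ (supported on $\hat v^\perp\cap\s^{n-1}$, degenerating to two Diracs when $n=2$) for the silhouette term. The paper carries this out by first assuming $A$ is $C^2_+$ so that Definition~\ref{defn:WMSAM} applies verbatim, then passes to general $A$ by approximation---precisely the workaround you recommend in your last paragraph.

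Two small remarks. First, in your opening paragraph you attribute the lit-region restriction to the vanishing of $h_{[0,v]}$ itself; in the formula of Theorem~\ref{t:best_thm} it is $h_C$ that sits as the integrand, and $[0,v]$ enters only through $dS^\mu_{A;[0,v]}$, so the collapse actually comes from $\nabla h_{[0,v]}$ vanishing on $\{\langle u,v\rangle<0\}$. Second, in your geometric sketch the factor $(n-1)$ does not arise from the Jacobian of the lateral prism---it is the normalization built into Theorem~\ref{t:best_thm}, coming ultimately from the second-order differentiation behind Definition~\ref{def:mixed}. Neither point affects the validity of the approach.
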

\begin{proof}
We first consider the case when $n\geq 3$. Recall that the mixed surface-area measure $dS_{K,L}$ is $1$-homogeneous in the argument of the compact, convex set $L$. Denote $\hat v=v/|v|$ and observe that $dS_{A[n-2],[0,v]}=|v|dS_{A[n-2],[0,\hat v]}$ and that $dS_{A[n-2],[0,\hat v]}$ is the restriction of $dS_A$ to $\hat v^\perp$. Suppose that $A$ is $C^2_+$. Then, these observations and \eqref{eq:deriv} yield that $\mu(A;[0,v],C)$ equals
    \begin{align*}
\int_{\s^{n-1}}h_C(u)\left((n-1)\phi(n^{-1}_A(u))|v|\chi_{\{\langle u,v \rangle=0\}}(u) + \langle\nabla \phi(n^{-1}_A(u)),\nabla h_{[0,v]}(u)\rangle \right)dS_A(u).
    \end{align*}
    Writing the above over $\partial A$, this becomes
      \begin{equation*}
\int_{\partial A}h_C(n_A(y))\left((n-1)\phi(y)|v|\chi_{\{\langle n_A(y),v \rangle=0\}}(y) + \langle\nabla \phi(y),\nabla h_{[0,v]}(n_A(y))\rangle \right)dy.
    \end{equation*}
We can then drop the assumption that $A$ is $C^2_+$ with an approximation argument. Next, by using that $\nabla h_{[0,v]}(n_A(y))=v\cdot \chi_{\{y\in \partial A:\langle n_A(y),v \rangle \geq 0\}}(y)$ a.e, we obtain \eqref{eq:sym_seg_0}.

     For the case $n=2$, we observe that $dS_{A[n-2],[0,v]}=dS_{[0,v]}$. Then, using the fact that surface area in the plane is $1$-homogeneous in the argument of the convex set, we obtain $dS_{[0,v]}=|v|dS_{[0,\hat v]}$. It is well-known that, for $\theta\in\s^{n-1}$, $dS_{[0,\theta]}$ is the restriction of the spherical Lebesgue measure to $\s^{n-1}\cap \theta^\perp$. In the case of a planar unit vector, this is just Dirac masses at the two points on the sphere orthogonal to $\theta$, which in this case are precisely the two points we denote by $\pm v^\perp$.
    Thus, we obtain
    $$dS_{A[n-2],[0,v]}(u)=|v|dS_{[0,\hat v]}(u)=|v|\left(\delta_{-v^\perp}(u)+\delta_{v^\perp}(u)\right).$$
       If, moreover, $A$ is $C^2_+$, then, inserting the above observations into \eqref{eq:deriv}, we obtain 
    \begin{align*}
\int_{\s^{1}}h_C(u)\left(\phi(n^{-1}_A(u))|v|\left(\delta_{-v^\perp}(u)+\delta_{v^\perp}(u)\right) + \langle\nabla \phi(n^{-1}_A(u)),\nabla h_{[0,v]}(u)\rangle dS_A(u)\right).
    \end{align*}
    Writing the above over $\partial A$, this becomes
      \begin{equation*}
\int_{\partial A}h_C(n_A(y))\left(\phi(y)|v|\left(\delta_{-v^\perp}(n_A(y))+\delta_{v^\perp}(n_A(y))\right) + \langle\nabla \phi(y),\nabla h_{[0,v]}(n_A(y))\rangle dy\right).
    \end{equation*}
    At this point, we can drop the assumption that $A$ is $C_2^+$. By taking limits, this becomes:
     \begin{equation*}
    \begin{split}
        \mu(A;B,C) &=|v|\left( h_C(-v^\perp)\int_{\{y\in \partial A: n_A(y)=-v^\perp\} }\phi(y)dy + h_C(v^\perp)\int_{\{y\in \partial A: n_A(y)=v^\perp\} }\phi(y)dy\right)
        \\
        &+\int_{\partial A}h_C(n_A(y))\langle\nabla \phi(y),\nabla h_{[0,v]}(n_A(y))\rangle dy.
    \end{split}
    \end{equation*}
Next, by using that $\nabla h_{[0,v]}(n_A(y))=v\cdot \chi_{\{y\in \partial A:\langle n_A(y),v \rangle \geq 0\}}(y)$ a.e, we obtain
 \begin{equation*}
    \begin{split}
        \mu(A;B,C) &=|v|\left( h_C(-v^\perp)\int_{\{y\in \partial A: n_A(y)=-v^\perp\} }\phi(y)dy + h_C(v^\perp)\int_{\{y\in \partial A: n_A(y)=v^\perp\} }\phi(y)dy\right)
        \\
        &+\int_{\{y\in \partial A:\langle n_A(y),v \rangle \geq 0\}}h_C(n_A(y))\langle\nabla \phi(y),v\rangle dy.
    \end{split}
    \end{equation*}
    Inserting the definition of $\mu^+$ and dividing by $|v|$ yields \eqref{eq:sym_seg}.
\end{proof}

    \begin{thm}
    \label{thm:strict_2}
       Let $n\geq 1$, and suppose $\mu$ is a Radon measure on $\R^n$ such that: for every zonoid $A$, every centered zonoid $B$ and every zonoid $C$ containing the origin, one has
        $$\mu(A+C;B) \geq \mu(A;B).$$
        Then, $\mu$ is a constant multiple of the Lebesgue measure.
    \end{thm}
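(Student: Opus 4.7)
The plan is to combine three ingredients: the equivalence between supermodularity and positivity of mixed measures (Proposition~\ref{p:sup_deriv_equiv}), the explicit integral formula in Proposition~\ref{p:sym_seg}, and a shrinking-ball argument that isolates $\nabla\varphi$ from $\varphi$ itself.

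First I would reduce to the case when $\mu$ has a $C^1$ density $\varphi$. The mollification argument of Proposition~\ref{p:sup_mod_shift} applies here: it only needs the hypothesis to be invariant under translating $A$ by an arbitrary $z\in\R^n$, and translates of zonoids are again zonoids. After producing smooth approximants $\mu_j\to\mu$ weakly, each still satisfying the hypothesis, the closing argument at the end of Lemma~\ref{l:points} (boundedness of the proportionality constants $c_j$ and extraction of a convergent subsequence) returns us to the original Radon $\mu$. By Proposition~\ref{p:sup_deriv_equiv} with $\mathcal{A}$ the zonoids, $\mathcal{B}$ the centered zonoids, and $\mathcal{C}$ the zonoids containing the origin, the hypothesis is equivalent to $\mu(A;B,C)\ge 0$ on these classes. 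Multilinearity of $\mu(A;\cdot,\cdot)$ in the second and third arguments (the same chain-rule computation used in the proof of Proposition~\ref{p:sup_deriv_equiv}), together with Proposition~\ref{p:zon_ori}, then reduces the problem to showing
\[
\mu(A;[-v,v],[0,u])\geq 0
\]
for every zonoid $A$ and every $v,u\in\R^n\setminus\{0\}$.

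The crux is to specialize $u=v$ and take $A=rB_2^n+z$ for arbitrary $z\in\R^n$ and small $r>0$ (a translated Euclidean ball, which is a zonoid). Decomposing $[-v,v]=[0,v]+[0,-v]$ and applying Proposition~\ref{p:sym_seg} to each of the four resulting mixed measures, the codimension-two boundary term vanishes because $h_{[0,\pm v]}(\theta)=\langle\theta,\pm v\rangle_+$ is identically zero on $v^\perp$; the cross-terms $\mu(A;[0,v],[0,-v])$ and $\mu(A;[0,-v],[0,v])$ also vanish because the corresponding integrands are supported on opposite half-spaces of $\s^{n-1}$. Parameterizing $\partial A$ by the Gauss map $\theta\mapsto z+r\theta$, what survives is
\[
\mu(rB_2^n+z;[-v,v],[0,v]) = |v|^2 r^{n-1}\int_{\{\theta\in\s^{n-1}:\langle\theta,\hat v\rangle\geq 0\}}\langle\theta,\hat v\rangle\,\langle\nabla\varphi(z+r\theta),\hat v\rangle\,d\theta.
\]
Dividing the hypothesis by $r^{n-1}$, sending $r\to 0^+$, and using continuity of $\nabla\varphi$ gives $c_n\,\langle\nabla\varphi(z),v\rangle\geq 0$ for a strictly positive constant $c_n$; the symmetric computation with $u=-v$ yields the reverse inequality. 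Hence $\langle\nabla\varphi(z),v\rangle=0$ for every $z$ and every $v$, so $\varphi$ is constant and $\mu$ is a multiple of $\vol_n$.

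The main obstacle is the choice of test triple. For a generic $u$, the codimension-two boundary term in Proposition~\ref{p:sym_seg} is of order $r^{n-2}$ and dominates the gradient term (of order $r^{n-1}$) as $r\to 0^+$, producing only the trivial bound $\varphi(z)\geq 0$. The choice $u=\pm v$, which forces $h_C$ to vanish exactly on $v^\perp$, is what eliminates the leading boundary term and exposes the gradient. The case $n=1$ is handled separately by Lemma~\ref{l:dim_1}, since in one dimension the hypothesis reduces to the surface-area form treated there via $\mu(A;[-v,v])=v\,\mu^+(\partial A)$.
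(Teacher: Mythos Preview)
Your proposal is correct and follows essentially the same strategy as the paper: reduce to a smooth density via Proposition~\ref{p:sup_mod_shift}, convert the hypothesis to $\mu(A;B,C)\ge 0$ via Proposition~\ref{p:sup_deriv_equiv}, reduce to segments by multilinearity, specialize to $B=[-v,v]$, $C=[0,\pm v]$, and then shrink a zonoidal test body to isolate $\langle\nabla\varphi,v\rangle$. The only real difference is the choice of test body: the paper takes $A=D_v(r,x)$, an $(n{-}1)$-dimensional disk in $x+v^\perp$, so that the formula of Proposition~\ref{p:sym_seg} collapses immediately to $\int_{D_v(r,x)}\langle\nabla\varphi,v\rangle\,dy$, while you take $A=rB_2^n+z$ and kill the codimension-two term by exploiting $h_{[0,\pm v]}\equiv 0$ on $v^\perp$. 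Your version has the mild advantage that Proposition~\ref{p:sym_seg} is stated for full-dimensional $A$, so no limiting argument for degenerate bodies is needed; the paper's disk makes the integral slightly cleaner but requires interpreting the formula on a lower-dimensional set.
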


The $n=1$ case of Theorem~\ref{thm:strict_2} is precisely again Lemma~\ref{l:dim_1}. For higher dimensions, the theorem will actually be a consequence of a stronger result, Proposition~\ref{p:strict_2} below. We first show how the proposition implies the theorem, and then prove the proposition.

\begin{prop}
\label{p:strict_2}
    Let $n\geq 2$, and suppose $\mu$ is a Radon measure on $\R^n$ with $C^1$ density such that: for every zonoid $A$ and vector $v\in\R^n\setminus\{0\}$, one has
    $$\mu(A;[-v,v],[0,v]) \geq 0.$$
    Then, $\mu$ is a multiple of the Lebesgue measure.
\end{prop}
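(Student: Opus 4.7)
The plan is to decompose the symmetric segment via the Minkowski identity $[-v,v] = [0,v] + [0,-v]$, apply the formulas of Proposition~\ref{p:sym_seg} to reduce the hypothesis to a one-sided condition on a directional derivative of the density, and then localize at an arbitrary point by specializing $A$ to a small Euclidean ball.

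First, by applying the chain rule to the mixed partial derivative in Definition~\ref{def:mixed}, the mixed measure $\mu(A;\cdot,C)$ is additive in its second slot, so
$$\mu(A; [-v,v], [0,v]) = \mu(A; [0,v], [0,v]) + \mu(A; [0,-v], [0,v]).$$
The next step is to show that the second summand vanishes. Applying Proposition~\ref{p:sym_seg} with $-v$ in place of $v$ in the second slot and $C = [0,v]$, every surviving term involves $h_{[0,v]}$ evaluated on the half-space $\{u : \langle u, v\rangle \leq 0\}$, a set on which $h_{[0,v]}(u) = \max\{0, \langle u, v\rangle\}$ vanishes identically; the boundary-point contributions in the $n=2$ case likewise vanish since $h_{[0,v]}(\pm v^\perp) = 0$. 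Thus the hypothesis reduces to $\mu(A; [0,v], [0,v]) \geq 0$ for every zonoid $A$ and $v \neq 0$.

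Applying Proposition~\ref{p:sym_seg} to this remaining term (the $\{\langle n_A, v\rangle = 0\}$ contributions again vanish for the same reason, as do the planar boundary-point contributions), and writing $\varphi$ for the $C^1$ density of $\mu$, yields
$$\mu(A; [0,v], [0,v]) = \int_{\{y \in \partial A:\, \langle n_A(y), v\rangle \geq 0\}} \langle n_A(y), v\rangle \langle \nabla \varphi(y), v\rangle\, dy.$$
To localize at an arbitrary $x_0 \in \R^n$, I would apply the inequality with $A = x_0 + \epsilon B_2^n$, which is a zonoid since $B_2^n$ is. Parameterizing $\partial A$ via $y = x_0 + \epsilon u$, $u \in \s^{n-1}$ (so that $n_A(y) = u$ and $dy = \epsilon^{n-1}du$), then dividing by $\epsilon^{n-1}$ and sending $\epsilon \to 0^+$ using continuity of $\nabla \varphi$, gives
$$\langle \nabla \varphi(x_0), v\rangle \int_{\{u \in \s^{n-1}:\, \langle u, v\rangle \geq 0\}} \langle u, v\rangle\, du \geq 0.$$
Since the integral on the left is strictly positive, $\langle \nabla \varphi(x_0), v\rangle \geq 0$; replacing $v$ by $-v$ gives the reverse inequality, so $\nabla \varphi(x_0) = 0$. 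As $x_0$ is arbitrary, $\varphi$ is constant, proving that $\mu$ is a multiple of the Lebesgue measure.

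The main subtle point is the vanishing of $\mu(A; [0,-v], [0,v])$, which is precisely what allows the symmetric-segment hypothesis to reduce to a one-sided condition on the directional derivative $\langle \nabla \varphi, v\rangle$; without this, the two-sided character of $[-v,v]$ would force the localization step to yield no information. Once this identity is in hand, the remaining bookkeeping with Proposition~\ref{p:sym_seg} and the Lebesgue-differentiation-style limit on the ball are routine.
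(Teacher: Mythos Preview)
Your proof is correct and takes a genuinely different route from the paper's. The paper applies the hypothesis with $A$ equal to the degenerate zonoid $D_v(r,x)=x+(rB_2^n\cap v^\perp)$, an $(n-1)$-dimensional disk orthogonal to $v$; for this choice the formula of Proposition~\ref{p:sym_seg} collapses directly to $\int_{D_v(r,x)}\langle\nabla\varphi(y),v\rangle\,dy\geq 0$, and the companion inequality with $[0,-v]$ in place of $[0,v]$ gives the reverse, whence Lebesgue differentiation in the affine hyperplane yields $\langle\nabla\varphi(x),v\rangle=0$. You instead keep $A$ full-dimensional (a translated Euclidean ball), split $[-v,v]=[0,v]+[0,-v]$ via the bilinearity \eqref{mixed_multi}, and observe that the cross-term $\mu(A;[0,-v],[0,v])$ vanishes because $h_{[0,v]}$ is identically zero on the relevant half-sphere. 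This buys you a direct application of Proposition~\ref{p:sym_seg} to a genuine convex body, avoiding the (implicit) extension of that formula to lower-dimensional $A$; the paper's choice, on the other hand, makes the integrand trivially constant on the disk and sidesteps your half-sphere localization computation. Both arguments finish the same way, by replacing $v$ with $-v$ and varying $v$.
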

\begin{proof}[Proof of Theorem~\ref{thm:strict_2}]
    By using Proposition~\ref{p:sup_deriv_equiv}, with $\mathcal{A}$ all zonoids, $\mathcal{B}$ all centered zonoids and $\mathcal{C}$ all zonoids that contain the origin, this is equivalent to $\mu(A;B,C) \geq 0$, where $A\in\mathcal{A}, B\in\mathcal{B}$ and $C\in\mathcal{C}$. From \eqref{mixed_multi}, this statement is equivalent to $\mu(A;[-z,z],[0,v]) \geq 0$ holding for every $z,v\in\R^2\setminus\{0\}$. Notice we can again assume via Proposition~\ref{p:sup_mod_shift} that $\mu$ has a density that is continuous with respect to the Lebesgue measure. We then consider the simpler case when $z=v$, which is precisely the content of Proposition~\ref{p:strict_2}.
\end{proof}

    \begin{proof}[Proof of Proposition~\ref{p:strict_2}] Let $\mu$ have density $\varphi$. Consider $v\in \s^{n-1}$, and for any $x\in \R^n$, consider $(n-1)$-dimensional Euclidean disks $D_v(r,x)$ with outer unit normal $v$, with $r>0$ being the radius of the disk and $x$ being the center of the disk, i.e. $D_v(r,x)=x+(rB_2^n\cap v^\perp)$.   We first observe that Proposition~\ref{p:sym_seg} yielding $\mu(D_v(r,x);[-v,v],[0,v]) \geq 0$ precisely means
\begin{equation}
    \label{eq:sym_seg_new}
    \int_{D_v(r,x)}\left\langle\nabla \phi(y),v\right\rangle dy \geq 0.
    \end{equation}
Next, we obtain from $\mu(D_v(r,x);[-v,v],[0,-v]) \geq 0$ similarly that
\begin{equation}
    \label{eq:sym_seg_new_2}
    \int_{D_{v}(r,x)}\left\langle\nabla \phi(y),v\right\rangle dy \leq 0.
    \end{equation}
Combining the two, we obtain
    \begin{equation}
    \label{eq:sym_seg_new_4}
    \int_{D_{v} (r,x)}\left\langle\nabla \phi(y),v\right\rangle dy = 0,
    \end{equation}
    i.e. the vector field on $\R^n$ given by $$x \mapsto \int_{D_v(r,x)}\nabla \phi(y) dy$$
    is orthogonal to $v$. Sending $r\to 0$ and using Lebesgue's differentiation theorem in the affine subspace of $\R^n$ containing $D_v(r,x)$, we deduce that $\nabla \phi(x)$ is orthogonal to $v$.

    We then vary $v$ over, say, the canonical basis vectors to deduce $\nabla \phi(x)$ is the origin. Since this is true for every $x\in\R^n$, we must have that $\phi$ is constant.
      \end{proof}

 \begin{rem}
        Notice that Theorem~\ref{t:Radon} can also be proven as a corollary of Theorem~\ref{thm:strict_2}.
    \end{rem}

\subsection{Direct characterizations of super-or-sub-modularity}
From the results of Sections~\ref{sec:lebclass} and \ref{sec:weaksup}, it seems reasonable to only consider super/submodularity over symmetric convex sets and rotational invariant measures. For example, we establish in this section that, even when considering only symmetric convex sets, the Gaussian measure is neither submodular nor supermodular, except in dimension $1$. We first restrict our sets to dilates of a fixed Borel set.

\begin{lem}
\label{l:condition}
Let $\mu$ be a Borel measure on $\R^n$, $n\geq 1$, and fix a convex set $K\subset \R^n$.   Then, 
\begin{equation}\label{eq:conv}
\mu(aK+bK+cK) + \mu(aK) \geq \mu(aK+bK) + \mu(aK+cK), \mbox{  } \forall a,b,c \ge 0
\end{equation}
if and only if the function $t\mapsto\mu(tK)$ is a convex function for $t>0$. Similarly, 
$$
\mu(aK+bK+cK) + \mu(aK) \leq \mu(aK+bK) + \mu(aK+cK), \mbox{  } \forall a,b,c \ge 0
$$
if and only if the function $t\mapsto\mu(tK)$ is a concave function for $t>0$.
\end{lem}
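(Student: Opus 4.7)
The plan is to reduce the supermodularity condition to a purely one-dimensional statement about the scalar function $f(t) := \mu(tK)$, $t > 0$. The key observation is the Minkowski identity $aK + bK = (a+b)K$, which holds for any convex $K$ and all $a,b\geq 0$ (since $a k_1 + b k_2 = (a+b)\bigl(\tfrac{a}{a+b} k_1 + \tfrac{b}{a+b} k_2\bigr) \in (a+b)K$ by convexity, and the reverse inclusion is immediate). Consequently, \eqref{eq:conv} is equivalent to the classical four-point inequality
$$f(a+b+c) + f(a) \geq f(a+b) + f(a+c), \quad a,b,c \geq 0.$$

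Next, I would rewrite this as the condition of \emph{non-decreasing differences}: with the substitution $u := a$, $v := a+b$, $h := c$, it reads
$$f(v+h) - f(v) \geq f(u+h) - f(u) \quad \text{for all } 0 \leq u \leq v \text{ and } h \geq 0.$$
The goal is to establish equivalence of this condition with convexity of $f$ on $(0,\infty)$. The forward direction is a direct application of the three-chord lemma: when the intervals $[u, u+h]$ and $[v, v+h]$ are disjoint, one compares their chord slopes directly; when they overlap (i.e., $v < u+h$), one applies the three-chord lemma instead to the pair $[u,v]$ and $[u+h, v+h]$, which both have length $v - u$. In either case, the displayed inequality follows. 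For the converse, substituting $u = x$, $v = (x+z)/2$, $h = (z-x)/2$ into the non-decreasing differences inequality yields the midpoint convexity $2f((x+z)/2) \leq f(x) + f(z)$; since $f$ is Borel measurable (as $\mu$ is a Borel measure), Sierpi\'nski's classical theorem upgrades midpoint convexity to full convexity.

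The submodular case is strictly dual: every inequality reverses, producing concavity of $f$, with the same three-chord argument and midpoint-concavity-plus-measurability implication. The main subtlety --- and really the only non-formal step in the argument --- is the passage from midpoint convexity to convexity, which relies on the measurability of $f$; for measures with density, this step is even more transparent as $f$ is then continuous.
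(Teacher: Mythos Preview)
Your proof is correct and follows the same route as the paper: reduce to the one-variable function $f(t)=\mu(tK)$ via the identity $aK+bK=(a+b)K$, and then identify \eqref{eq:conv} with the increasing-increments characterization of convexity. The paper simply asserts this last equivalence, whereas your appeal to midpoint convexity plus Sierpi\'nski's theorem supplies the only step that genuinely requires justification (increasing increments alone does not force convexity without some regularity of $f$).
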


\begin{proof}
The proof follows immediately by rewriting (\ref{eq:conv}) as
$$\frac{\mu((a+b+c)K)-\mu((a+b)K)}{c}\geq \frac{\mu((a+c)K)-\mu(aK)}{c},$$
which is an equivalent definition of a function being convex. The second inequality follows similarly.
\end{proof}
\noindent We henceforth consider measures $\mu$ with density $V(|x|)$ and study the restrictions  on $V$ for supermodularity of $\mu$ to hold. In the following proposition, we elaborate on Lemma~\ref{l:condition} in the case when $K$ is a centered Euclidean ball and $\mu$ is a rotational invariant measure. 
\begin{prop}
\label{p:dim_n}
Suppose $\mu$ is a rotational invariant measure on $\R^n$, $n\geq 1$, of the form $d\mu(x)=V(|x|)dx,$ $V:[0,\infty)\to\R_+$. Then
$$
\mu(aB_2^n+bB_2^n+cB_2^n) + \mu(aB_2^n) \geq \mu(aB_2^n+bB_2^n) + \mu(aB_2^n+cB_2^n), \mbox{ } 
\forall a,b,c \ge 0$$
if and only if the function
$r\mapsto V(r)r^{n-1}$ is increasing for all $r>0$ and 
$$
\mu(aB_2^n+bB_2^n+cB_2^n) + \mu(aB_2^n) \leq \mu(aB_2^n+bB_2^n) + \mu(aB_2^n+cB_2^n), \mbox{ } 
\forall a,b,c \ge 0$$
if and only if the function
$r\mapsto V(r)r^{n-1}$ is decreasing for all $r>0$.
\end{prop}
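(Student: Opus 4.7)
The plan is to apply Lemma~\ref{l:condition} with $K=B_2^n$, which immediately reduces both statements to a question about the one-variable function $t\mapsto \mu(tB_2^n)$: the supermodularity inequality over dilates of the ball holds iff $t\mapsto \mu(tB_2^n)$ is convex on $(0,\infty)$, and the submodularity inequality holds iff it is concave. So the work is to translate convexity/concavity of this radial integral into the claimed monotonicity of $r\mapsto V(r)r^{n-1}$.

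The next step is a direct polar-coordinates computation. Writing
\begin{equation*}
\mu(tB_2^n)=\int_{tB_2^n}V(|x|)\,dx = n\omega_n \int_0^t V(r)\, r^{n-1}\, dr,
\end{equation*}
where $\omega_n=\vol_n(B_2^n)$, we see that $t\mapsto \mu(tB_2^n)$ is absolutely continuous on $(0,\infty)$ (since $V$ is locally integrable, as $\mu$ is Radon), and by the fundamental theorem of calculus
\begin{equation*}
\frac{d}{dt}\mu(tB_2^n)= n\omega_n\, V(t)\, t^{n-1}\qquad\text{for a.e.\ }t>0.
\end{equation*}

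Now I invoke the standard equivalence: an absolutely continuous function on $(0,\infty)$ is convex (respectively concave) iff its a.e.-derivative admits a monotone nondecreasing (respectively nonincreasing) representative. Combining this with the derivative formula above yields that $t\mapsto \mu(tB_2^n)$ is convex iff $r\mapsto V(r)r^{n-1}$ is (essentially) increasing, and concave iff $r\mapsto V(r)r^{n-1}$ is (essentially) decreasing. Chaining this with Lemma~\ref{l:condition} completes the proof.

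I do not expect a serious obstacle: the only issue requiring a little care is the regularity of $V$, but since $\mu$ is assumed to be a Radon measure with density $V(|\cdot|)$, $V$ is locally integrable on $[0,\infty)$ with the weight $r^{n-1}$, and the absolute continuity of $t\mapsto\mu(tB_2^n)$ and the standard characterization of convexity via the a.e.\ derivative handle the general (possibly non-continuous) case without modification.
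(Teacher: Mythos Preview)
Your proposal is correct and follows essentially the same approach as the paper: both apply Lemma~\ref{l:condition} with $K=B_2^n$, compute $\mu(tB_2^n)$ via polar coordinates, differentiate to obtain (up to a constant) $V(t)t^{n-1}$, and conclude that convexity/concavity of $t\mapsto\mu(tB_2^n)$ is equivalent to the claimed monotonicity. Your additional care about absolute continuity and the a.e.\ derivative is a minor refinement over the paper's presentation, but the argument is otherwise identical.
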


\begin{proof}
Begin by defining the following function by passing to polar coordinates: $$f(r)=\mu(rB_2^n)=\int_{\s^{n-1}}\int_{0}^r V(t)t^{n-1}dtd\theta=\vol_{n-1}(\s^{n-1}) \int_{0}^r V(t)t^{n-1}dt.$$
Taking a derivative, we get $f^\prime(r)=\vol_{n-1}(\s^{n-1}) V(r)r^{n-1}.$
Therefore, we see that $V(r)r^{n-1}$ must be an increasing (decreasing) function for $f$ to be convex (concave), which yields the claim via Lemma~\ref{l:condition}.
\end{proof}
\noindent We obtain the following immediate corollary in the case of dimension $n=1$.
\begin{cor}
\label{cor:dim_1_sub_log}
Let $d\mu(x)=V(x)dx,$ be a measure on $\R$ where $V$ is an even function. Then, for $A,B,C$ symmetric intervals, one has 
$$
\mu(A+B+C) + \mu(A) \geq \mu(A+B) + \mu(A+C)
$$
if and only if
$V$ is non-decreasing (e.g. increasing) on $\R^+$ and
$$
\mu(A+B+C) + \mu(A) \leq \mu(A+B) + \mu(A+C),
$$
if and only if
$V(x)$ is non-increasing (e.g. decreasing) on $\R^+$.
\end{cor}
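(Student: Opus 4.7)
The plan is to deduce this as the direct $n=1$ specialization of Proposition~\ref{p:dim_n}. First, since $V$ is even, the density satisfies $V(x) = V(|x|)$ for every $x \in \R$, so $\mu$ is precisely a rotational invariant measure on $\R$ of the form $d\mu(x) = V(|x|)\,dx$ to which Proposition~\ref{p:dim_n} applies. Next, a symmetric interval $A \subset \R$ satisfies $A = -A$, so every nonempty compact symmetric interval is of the form $[-a,a] = a B_2^1$ for some $a \ge 0$. This identifies the class of symmetric intervals with the class $\{r B_2^1\}_{r \ge 0}$ of nonnegative dilates of the unit ball in $\R$.

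With these identifications in place, the supermodularity condition $\mu(A+B+C) + \mu(A) \ge \mu(A+B) + \mu(A+C)$ ranging over all symmetric intervals $A,B,C$ is precisely the inequality of Proposition~\ref{p:dim_n} for $n=1$ ranging over all $a,b,c \ge 0$. Plugging $n=1$ into the monotonicity criterion $r \mapsto V(r) r^{n-1}$ collapses it to $r \mapsto V(r)$, and the proposition then tells us that supermodularity over symmetric intervals is equivalent to $V$ being non-decreasing on $\R^+$, while the submodular inequality is equivalent to $V$ being non-increasing. There is no real obstacle: the only bookkeeping point is that Proposition~\ref{p:dim_n} is phrased with $V \colon [0,\infty) \to \R_+$, whereas here $V$ is given on all of $\R$, and evenness immediately bridges the two formulations.
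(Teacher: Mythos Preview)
Your proof is correct and matches the paper's approach exactly: the paper presents this corollary as an immediate specialization of Proposition~\ref{p:dim_n} to $n=1$, and you have simply spelled out the identifications (even $V$ gives rotational invariance, symmetric intervals are dilates of $B_2^1$, and $V(r)r^{n-1}$ collapses to $V(r)$) that make this specialization work.
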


We conclude this subsection by applying the above results to the Gaussian measure.
\begin{prop}\label{p:mod_log}
The measure $\gamma_1$ is submodular on the class of symmetric convex bodies. That is, for symmetric intervals $A,B,C\subset\R,$ one has
$$\gamma_1(A+B+C) + \gamma_1(A) \leq \gamma_1(A+B) + \gamma_1(A+C).$$
For $n\geq 2,$ $\gamma_n$ is neither submodular nor supermodular over the class of symmetric convex sets.
\end{prop}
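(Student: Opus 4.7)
The plan for the one-dimensional statement is to appeal directly to Corollary~\ref{cor:dim_1_sub_log}. The Gaussian density $V(x) = (2\pi)^{-1/2}e^{-x^2/2}$ is even and strictly decreasing on $\R^+$, and the symmetric convex bodies in $\R$ are exactly the centered intervals $[-a,a]$, so the corollary immediately delivers the submodularity inequality for $\gamma_1$. This step is a one-line verification once the corollary is in hand.

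For $n\geq 2$, my plan is to exhibit the failure of both submodularity and supermodularity by restricting to dilates of the Euclidean ball $K = B_2^n$ and invoking Proposition~\ref{p:dim_n}. The Gaussian radial density is $V(r) = (2\pi)^{-n/2}e^{-r^2/2}$, and a direct computation gives
$$\odv{}{r}\left(V(r)r^{n-1}\right) \;\propto\; r^{n-2}e^{-r^2/2}(n-1-r^2).$$
For $n\geq 2$ this changes sign at $r = \sqrt{n-1}$, being positive before and negative after, so $r\mapsto V(r)r^{n-1}$ is strictly unimodal on $\R^+$, and hence neither non-decreasing nor non-increasing on all of $\R^+$.

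Reading Proposition~\ref{p:dim_n} in the contrapositive, the failure of monotonicity in each direction produces non-negative triples $(a,b,c)$ on which the supermodularity inequality for $\{aB_2^n, bB_2^n, cB_2^n\}$ fails, and separate triples on which the submodularity inequality fails. Since each $tB_2^n$ is a symmetric convex body, these counterexamples lie in the stated class, so $\gamma_n$ is neither submodular nor supermodular over symmetric convex sets. I do not anticipate a serious obstacle: the real work has already been done by Lemma~\ref{l:condition} and Proposition~\ref{p:dim_n}, which reduce both questions, when tested on one-parameter dilated families of a fixed centrally symmetric convex body, to a one-variable monotonicity check on $V(r)r^{n-1}$; the Gaussian simply fails this check in both directions as soon as $n\geq 2$.
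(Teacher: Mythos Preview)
Your proposal is correct and follows essentially the same route as the paper: invoke Corollary~\ref{cor:dim_1_sub_log} for $n=1$ using that the Gaussian density is even and decreasing on $\R^+$, and for $n\geq 2$ apply Proposition~\ref{p:dim_n} after observing that $r\mapsto (2\pi)^{-n/2}e^{-r^2/2}r^{n-1}$ increases on $[0,\sqrt{n-1})$ and decreases on $[\sqrt{n-1},\infty)$. The paper's proof is essentially a two-line version of what you wrote.
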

\begin{proof}
From Corollary~\ref{cor:dim_1_sub_log}, $\gamma_1$ is submodular. For $n\geq 2,$ the function $r\mapsto\frac{1}{(2\pi)^{n/2}}e^{-\frac{r^2}{2}}r^{n-1}$ is increasing for $r\in[0,\sqrt{n-1})$ and  decreasing  for $r\in[\sqrt{n-1},\infty)$. Therefore, from Proposition~\ref{p:dim_n}, it follows that $\gamma_n$ is neither submodular nor supermodular.
\end{proof}

 \subsection{Log-submodularity}
\label{sec:logsub}
In this subsection, we use the theory of mixed measures to study the following three body inequality in the case of other $\log$-concave measures instead of volume: let $\mu$ be a log-concave measure supported on a class of compact sets $\mathcal{C}$, then we wish to determine the best constant $c_{\mu,n}$ (for this class) such that for $A,B,C\in \mathcal{C}$
 \begin{equation}
 \label{eq:question}
 \mu(A)\mu(A+B+C) \le c_{\mu,n} \mu(A+B)\mu(A+C).
 \end{equation}
Inspired by the known submodularity of entropy for convolutions \cite{Mad08:itw, MK18} and its interesting consequences for convex geometry \cite{BM11:cras}, Bobkov and the third named author had first studied this concept in the context of the Lebesgue measure; they showed \cite{BM12:jfa} that for convex bodies $A,B$ and $C$ that
$$\vol_n(A)\vol_n(A+B+C) \leq 3^n  \vol_n(A+B)\vol_n(A+B).$$
Defining, for compact, convex sets $A,B,$ and $C$
\begin{equation}
c(A,B,C)=\frac{\vol_n(A)\vol_n(A+B+C)}{\vol_n(A+B)\vol_n(A+B)} \quad   \mbox{   and    }  c_n=\sup_{\substack{\text{compact,} \\ \text{convex sets} \\ A,B,C\subset \R^n}} c(A,B,C),
\label{eq:BMcons}
\end{equation}
the result of \cite{BM12:jfa} shows $c_n \leq 3^n.$ It was shown in \cite{FMZ24} that $c_2=1$, $c_3=4/3$ and, for $n\geq 4,$ $(4/3 +o(1))^{n}\le c_n\le  \varphi^{n}$, where $\varphi=(1+\sqrt{5})/2$ is the golden ratio.
 
 We first obtain the following immediate result from the definition of log-concavity. 
 \begin{prop}[Log-submodularity for dilated sets]
 \label{p:a}
 Let $\mu$ be a log-concave Borel measure on $\R^n$, $n\geq 1$. Fix convex sets $A$ and $B$ in $\R^n$. Then, if $C=tB$, for some $t>0$, one has
 $$\mu(A)\mu(A+B+C) \le \mu(A+B)\mu(A+C).$$
 \end{prop}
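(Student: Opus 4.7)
The plan is to reduce the three-body inequality to a one-variable concavity statement. Since $C = tB$, the claim becomes
$$\mu(A)\,\mu(A+(1+t)B) \leq \mu(A+B)\,\mu(A+tB),$$
so everything happens along the one-parameter family $\{A + sB\}_{s \geq 0}$.

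First I would introduce the function $f(s) := \log\mu(A + sB)$ for $s \geq 0$ (the sets $A+sB$ are convex and, without loss of generality, we may assume they all have positive $\mu$-measure, since otherwise the inequality is trivial). I would then show that $f$ is concave on $[0,\infty)$. This is the only place log-concavity of $\mu$ enters: for $s_1, s_2 \geq 0$ and $\lambda \in [0,1]$, convexity of $A$ gives the identity
$$A + \bigl((1-\lambda)s_1 + \lambda s_2\bigr)B = (1-\lambda)(A + s_1 B) + \lambda(A + s_2 B),$$
so applying log-concavity of $\mu$ to the two convex sets $A + s_1 B$ and $A + s_2 B$ yields $f((1-\lambda)s_1 + \lambda s_2) \geq (1-\lambda) f(s_1) + \lambda f(s_2)$.

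The rest is a standard consequence of concavity: for a concave function $f$, the increment $s \mapsto f(s+t) - f(s)$ is non-increasing in $s$ (equivalently, $f$ has non-increasing divided differences). Applied to the values $s = 0$ and $s = 1$, this gives $f(1+t) - f(1) \leq f(t) - f(0)$, which rearranges to $f(0) + f(1+t) \leq f(1) + f(t)$. Exponentiating returns the desired inequality $\mu(A)\,\mu(A+(1+t)B) \leq \mu(A+B)\,\mu(A+tB)$.

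There is no real obstacle here; the only subtlety is the degenerate case where $\mu(A+sB) = 0$ for some $s$ in the range of interest, which is handled trivially (if $\mu(A) = 0$ the inequality is immediate, and $\mu$ being a Borel measure ensures monotonicity $\mu(A+sB) \leq \mu(A+s'B)$ for $s \leq s'$, so vanishing at $s = 1+t$ would force vanishing at lower values and again trivialize the statement). The content of the proposition is entirely captured by the elementary fact that a concave function on the real line has the quadrilateral (submodular) property.
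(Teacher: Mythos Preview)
Your proof is correct and is essentially the paper's argument: both rest on the concavity of $s \mapsto \log\mu(A+sB)$, obtained from log-concavity of $\mu$ together with $(1-\lambda)A + \lambda A = A$. The paper simply writes out the two specific instances of this concavity---expressing $A+B$ and $A+tB$ each as a convex combination of $A$ and $A+(1+t)B$---and multiplies them, which is exactly your inequality $f(0)+f(1+t)\le f(1)+f(t)$ after exponentiation.
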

 \begin{proof}
 Observe that
 $$
 \mu(A+tB) =\mu \left(\frac{1}{t+1}  A + \frac{t}{t+1} ( A+(t+1)B)\right)\ge \mu\left(A\right)^{\frac{1}{t+1}} \mu\left( A+(t+1)B\right)^{\frac{t}{t+1}}
 $$
 and
 $$
 \mu(A+B) =\mu \left(\frac{t}{t+1}  A + \frac{1}{t+1} ( A+(t+1)B)\right)\ge \mu\left(A\right)^{\frac{t}{t+1}} \mu\left( A+(t+1)B\right)^{\frac{1}{t+1}}.
 $$
 \end{proof}
 
\begin{rem}Convexity in the proof of the above proposition was essential, as it allowed the distribution of dilation over Minkowski summation (e.g. $(t+1)B=tB+B$). In fact, convexity is required in general. Indeed, let $A=[-1,1],$ $B=C=[-r,-r+1]\cup [r-1,r]$ for $r>2.$ Notice that $B+C=[-1,1]\cup[-2r,-2r+2]\cup[2r-2,2r]$ and $A+B=B+C=[-r-1,-r+2]\cup[r-2,r+1].$ Thus, for any (strictly) radially decreasing log-concave measure on $\R,$ e.g. $\gamma_1,$ one has $\gamma_1(A)\gamma_1(A+B+C) > \gamma_1(A)\gamma_1(B+C) > \gamma_1([-1,1])^2 >0$ and yet $\gamma_1(A+B)\gamma_1(A+C)$ goes to zero as $r \to \infty.$ 
\end{rem}

Despite the positive result from Proposition~\ref{p:a}, the inequality \eqref{eq:question} cannot hold in general if $\mathcal{C}$ is not a class of  convex sets containing the origin. Denoting the unit Euclidean ball as $B_2^n,$ let $B=B_2^n+rv$ and $C=B_2^n-rv,$ where $v\in \s^{n-1}$ and $r>0$. Then, the left-hand side of \eqref{eq:question} is independent of $r,$ but the right-hand side is a function of $r$. These two examples show, in fact, that the weaker inequality
$$\mu(A)\mu(B+C)\leq \mu(A+B)\mu(A+C)$$
can only possibly be true for all log-concave measures $\mu$ if $\mathcal{C}$ is a class of convex sets containing the origin. This is surprising, because Ruzsa \cite{Ruz97} showed that
$$\vol_n(A)\vol_n(B+C)\leq \vol_n(A+B)\vol_n(A+C)$$
for every compact $A,B,C\subset\R^n.$

Notice that every symmetric convex interval in $\R$ is a dilate of $[-1,1]$, and so Proposition~\ref{p:a} yields that every log-concave measure on $\R$ satisfies \eqref{eq:question}, with $c_{\mu,1}=1$ and $B$ and $C$ symmetric intervals. In particular, the Gaussian measure. We collect this observation below.
 \begin{thm} {(The case of log-concave measures in $\R$)}
 \label{t:gauss_r}
Let $\mu$ be a log-concave measure on $\R$. Then, for any interval $A\subset\R$ and any symmetric intervals $B, C \subset \R$, one has 
 $$
 \mu(A)\mu(A+B+C) \le  \mu(A+B)\mu(A+C).
 $$
 \end{thm}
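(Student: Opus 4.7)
The plan is to observe that on the real line, any two symmetric intervals are dilates of one another, and then invoke Proposition~\ref{p:a} directly. Concretely, write the symmetric intervals as $B=[-b,b]$ and $C=[-c,c]$ with $b,c\ge 0$. If $B=\{0\}$ or $C=\{0\}$ the inequality degenerates to an equality, so we may assume $b>0$. Setting $t=c/b\ge 0$, we have $C=tB$, while $A$ is an interval (hence a convex set in $\R$).

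The hypotheses of Proposition~\ref{p:a} are then satisfied: $\mu$ is log-concave on $\R$, $A$ is a convex set, and $C$ is a non-negative dilate of $B$. Applying that proposition yields
\[
\mu(A)\mu(A+B+C)\le \mu(A+B)\mu(A+C),
\]
which is exactly the claim.

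The only thing to double-check is the degenerate case $b=0$ (and symmetrically $c=0$). If $B=\{0\}$, then $A+B=A$ and $A+B+C=A+C$, so both sides equal $\mu(A)\mu(A+C)$ and the inequality is trivially an equality. This covers all cases, so no real obstacle arises: the full strength of the one-dimensional statement is already contained in Proposition~\ref{p:a}, the special feature of dimension one being that the family of symmetric intervals is totally ordered by inclusion up to scaling, so that any two are automatically positive dilates of each other.
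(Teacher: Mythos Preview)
Your proof is correct and follows exactly the paper's approach: the paper observes just before the theorem that every symmetric interval in $\R$ is a dilate of $[-1,1]$, so Proposition~\ref{p:a} applies directly; you carry out the same reduction with the explicit parametrization $C=tB$. Your separate treatment of the degenerate cases $B=\{0\}$ or $C=\{0\}$ (needed since Proposition~\ref{p:a} is stated for $t>0$) is a minor refinement the paper leaves implicit.
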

 
In the volume case, it is known that \eqref{eq:question} with $c_{\mu,n}=1$ for $n\geq 3$ is false. We leave determining which measures and which classes of convex sets satisfy \eqref{eq:question} with $c_{\mu,n}=1$ as an open question. That is, if $\mathcal{C}$ is some fixed class of convex sets, we ask for which measures $\mu$ does it hold that, for $A,B,C\in\mathcal{C}$, one has 
\begin{equation}
\label{eq:good_ineq}
\mu(A)\mu(A+B+C) \le  \mu(A+B)\mu(A+C).\end{equation}
We conjecture that \eqref{eq:good_ineq} holds for $A,B,C$ being symmetric zonoids and $\mu$ being any log-concave measure. In the volume case, log-submodularity with constant $1$ was established recently by a subset of the authors, working with Meyer, in $\R^2$ and $\R^3$ \cite{FMMZ24} when $A,B$ and $C$ are taken to be zonoids; the $2$-dimensional case follows from \cite{SZ16}.

We conclude this subsection by showing that \eqref{eq:good_ineq} is related to the so-called B\'ezout-type inequalities for mixed measures (see \cite{FLMZ24:1}).

\begin{lem}[Local form of log-submodularity]
\label{l:m}
Fix $n\geq 1$. Let $\mu$ be a Radon measure on $\R^n$ and $\mathcal{C}$ a class of convex sets, with the property that $\mu(A;B,C)$ and $\mu(A;B)$ exist and are finite for every $A,B,C\in\mathcal{C}$. Then, \eqref{eq:good_ineq} is true for $\mu$ and for every $A,B,C\in\mathcal{C}$ if and only if
\begin{equation}\mu(A)\mu(A;B,C) \leq \mu(A;B)\mu(A;C).
\label{eq:inequality_second}
\end{equation}
\end{lem}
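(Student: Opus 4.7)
The plan is to mimic the strategy of Proposition~\ref{p:sup_deriv_equiv}, but with the logarithm of $f(s,t):=\mu(A+sB+tC)$ in place of $f$ itself. Under our standing hypotheses, and using the same argument that justifies \eqref{eq:limit_def_alt}, one has
$$\pdv{}{s}f(s,t)=\mu(A+sB+tC;B),\quad \pdv{}{t}f(s,t)=\mu(A+sB+tC;C),\quad \pdv{}{s,t}f(s,t)=\mu(A+sB+tC;B,C),$$
so that the local inequality \eqref{eq:inequality_second}, applied to the triple $(A+s_0B+t_0C,B,C)\in\mathcal{C}^3$, is precisely the statement
$$f(s_0,t_0)\,\partial_s\partial_t f(s_0,t_0)-\partial_s f(s_0,t_0)\,\partial_t f(s_0,t_0)\le 0,$$
i.e.\ $\partial_s\partial_t\log f(s_0,t_0)\le 0$, valid for every $s_0,t_0\ge 0$ since $\mathcal{C}$ is closed under Minkowski sums and dilations.

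For the direction \eqref{eq:inequality_second}$\Rightarrow$\eqref{eq:good_ineq}, I integrate this pointwise inequality on $[0,1]^2$ to obtain
$$\log f(1,1)-\log f(1,0)-\log f(0,1)+\log f(0,0)=\int_0^{1}\!\int_0^{1}\pdv{}{s,t}\log f(s,t)\,ds\,dt\le 0,$$
and exponentiating gives $\mu(A)\mu(A+B+C)\le\mu(A+B)\mu(A+C)$.

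For the direction \eqref{eq:good_ineq}$\Rightarrow$\eqref{eq:inequality_second}, I define
$$H(s,t):=\mu(A+sB)\,\mu(A+tC)-\mu(A)\,\mu(A+sB+tC).$$
By the global hypothesis applied to the triple $(A,sB,tC)$, one has $H(s,t)\ge 0$ for all $s,t\ge 0$, and clearly $H(0,t)=H(s,0)=0$. Fix $s>0$. Since $H(s,0)=0$, the right-hand derivative $\partial_t H(s,0)=\lim_{t\to0^+}H(s,t)/t$ exists by our assumptions and is non-negative, giving
$$\mu(A+sB)\mu(A;C)-\mu(A)\mu(A+sB;C)\ge 0.$$
Dividing this by $s$ and sending $s\to 0^+$ (the quantity vanishes at $s=0$, so we are computing a right derivative at $s=0$), and invoking \eqref{eq:limit_def_alt} together with the formula for $\partial_s\mu(A+sB)|_{s=0}=\mu(A;B)$, yields
$$\mu(A;B)\mu(A;C)-\mu(A)\mu(A;B,C)\ge 0,$$
which is \eqref{eq:inequality_second}.

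The only subtle step is the passage to the mixed partial in the reverse direction: one must be sure that the iterated limits used to extract $\partial_s\partial_t H(0,0)$ actually equal the mixed partial whose existence is given. This is handled cleanly by performing the two limits in order (first in $t$, then in $s$), since each of the intermediate derivatives exists by the hypothesis that $\mu(A';B)$ and $\mu(A';B,C)$ exist for every $A'\in\mathcal{C}$; no appeal to Clairaut/Schwarz is needed.
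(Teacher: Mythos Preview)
Your proof is correct and follows essentially the same strategy as the paper: both pass to $\log f(s,t)$ (or, equivalently, to the product-difference function $H$) and use the equivalence between the global inequality and the sign of the mixed partial at every point, integrating the mixed partial over a rectangle for one direction and differentiating twice for the other. The only cosmetic difference is that for the direction \eqref{eq:good_ineq}$\Rightarrow$\eqref{eq:inequality_second} you differentiate $H(s,t)=f(s,0)f(0,t)-f(0,0)f(s,t)$ directly, whereas the paper differentiates $\eta=\log f$; these are the same computation up to a factor $\mu(A)^2$.
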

\begin{proof}
We see that \eqref{eq:good_ineq} is equivalent to, for all $B,C\in\mathcal{C}$ and $s,t\ge0$, that
\begin{equation}\label{eq:mu:abc}
\mu(A)\mu(A+sB+tC) \le \mu(A+sB)\mu(A+tC).
\end{equation}
Let $g(s,t)=g_{A,B,C}(s,t)=\mu(A+sB+tC)$. Then inequality \eqref{eq:mu:abc} may be rewritten as  $g(s,t)g(0,0)\le g(s,0)g(0,t)$. Let $\eta(s,t)=\eta_{A,B,C}(s,t)=\ln(g(s,t))$. This may also be written as $\eta(s,t)+\eta(0,0)\le \eta(s,0)+\eta(0,t)$. This clearly implies that $\frac{\partial^2\eta_{A,B,C}}{\partial s\partial t}(0,0)\le0$. The reciprocal is true if we want to prove the inequality for a class of Borel sets closed under Minkowski summation. Indeed, for any $s_0,t_0\ge0$, one has
\[
\eta(s_0,t_0)-\eta(s_0,0)-\eta(0,t_0)+\eta(0,0)=\int_0^{s_0}\int_0^{t_0}\frac{\partial^2\eta}{\partial s\partial t}(s,t)dsdt.
\]
Thus, to prove \eqref{eq:mu:abc}, it is enough to prove that, for all $s,t\ge0$, $\frac{\partial^2\eta}{\partial s\partial t}(s,t)\le0$. But, using that 
$\eta_{A,B,C}(s_0+s,t_0+t)=\eta_{A+s_0B+t_0C,B,C}(s,t)$ from convexity, we see that 
\[
\frac{\partial^2\eta_{A,B,C}}{\partial s\partial t}(s_0,t_0)=\frac{\partial^2\eta_{A+s_0B+t_0C,B,C}}{\partial s \partial t}(0,0).
\]
Therefore, the inequality \eqref{eq:mu:abc} holds for all $A,B,C$ in a class of sets closed by Minkowski sums if and only if for all $A,B,C$ in this class $\frac{\partial^2\eta_{A,B,C}}{\partial s\partial t}(0,0)\le0$. Next, observe that $\frac{\partial^2\eta_{A,B,C}}{\partial s\partial t}(0,0)\le0$ is true if and only if
\begin{equation}\left(\pdv{g}{s,t}(0,0)\right)g(0,0)\le \pdv{g}{s}(0,0)\pdv{g}{t}(0,0).
\label{eq:mixed_deriv}
\end{equation}
We see that $$\pdv{g}{s}(0,0)=\mu(A;B), \pdv{g}{t}(0,0)=\mu(A;C), \; \text {and } \pdv{g}{s,t}(0,0)=\pdv{\mu(A+sB;C)}{s}(0).$$
Therefore, \eqref{eq:mixed_deriv} can be written as
\begin{equation*}\mu(A;B)\mu(A;C)\geq \mu(A)\left(\pdv{\mu(A+sB;C)}{s}(0)\right).
\end{equation*}
And then the result follows from \eqref{eq:limit_def_alt}.
\end{proof}

This global-to-local approach for the question of log-submodularity was first done in the volume case. In this instance, \eqref{eq:inequality_second} becomes

\begin{equation}
V(A[n-1],B)V(A[n-1],C)\geq \frac{n-1}{n}\vol_n(A)V(A[n-2],B,C).
\label{eq:AFZ}
\end{equation}
Therefore, \eqref{eq:AFZ} is referred to as \textit{local log-submodularity}. The inequality in \eqref{eq:AFZ} was verified to hold for all planar convex bodies \cite{SZ16} (and this is precisely equivalent to $c_2=1$ in \eqref{eq:BMcons} above). The equivalence between log-submodularity with constant $1$ and local log-submodularity therefore means that the work from \cite{FMMZ24} mentioned after \eqref{eq:zon} for zonoids in $\R^3$ is precisely equivalent to \eqref{eq:AFZ} holding for $A,B,C\in\mathcal{Z}^3.$  

Sometimes, it is possible to establish \eqref{eq:inequality_second} for a fixed set $A$ (indeed, in the prequel \cite{FLMZ24:1}, we considered the case $A=RB_2^n$ for $R>0$). Unfortunately, this does not establish \eqref{eq:good_ineq} for this $A.$ Indeed, if one were to fix $A$ and repeat the proof of Lemma~\ref{l:m}, then using that
\[
\frac{\partial^2\eta_{A,B,C}}{\partial s\partial t}(s_0,t_0)=\frac{\partial^2\eta_{A+s_0B+t_0C,B,C}}{\partial s \partial t}(0,0),
\]
the local form of log-submodularity would be to show, for every $s_0,t_0>0$ and $B,C\in\mathcal{C}$ that
$$\mu(A+s_0B+t_0C;B)\mu(A+s_0B+t_0C;C)\geq \mu(A+s_0B+t_0C)\mu((A+s_0B+t_0C);B,C).$$
Note that that this is \eqref{eq:inequality_second} not with $A$ but with $A+s_0B+t_0C.$

\section{Appendix A: existence of mixed measures}\label{app:rmk}
In this section, we briefly mention some known facts about $\mu(K;L)$, as well as advertising the study of $\mu(A;B,C)$ in the prequel \cite{FLMZ24:1}. Mainly, the purpose of this is to pacify the reader; that these quantities exist under minor assumptions, and so our results are not vapid.

We first recall the surface area measure of a $K\subset \R^n$ a compact, convex set: 
\begin{equation}
    \label{eq:surface}
    S_{K}(E)=\int_{n_K^{-1}(E)}d\mathcal{H}^{n-1}(y)
\end{equation}
for every Borel set $E \subset \s^{n-1},$ where $\mathcal{H}^{n-1}$ is the $(n-1)$ dimensional Hausdorff measure (henceforth, to be written as $dy$ for brevity). Here, the Gauss map $n_K:\partial K \to \s^{n-1}$ associates an element on the boundary of $K$ with its outer unit normal. The Gauss map is unique for almost all $x\in\partial K.$ 
If a convex body $K$ has positive radii of curvature everywhere, we say $K$ has \textit{positive curvature}. In this instance, there exists a continuous, strictly positive function $f_K(u)$, the \textit{curvature function} of $K$, such that one has $dS_K(u)=f_K(u)du$ and $n_K$ is a bijection between $\partial K$ and $\s^{n-1}$. It is standard to denote $C^2$ smooth convex bodies with positive curvature as being of the class $C^2_+$. From \cite[Theorem 2.7.1]{Sch14:book}, every compact, convex set can be uniformly approximated by convex bodies that are $C^2_+.$

Using the surface area measure, one obtains the following formula for mixed volumes:
\begin{equation}
	    V(K[n-1],L):=\frac{1}{n}\lim_{\epsilon\to0^+}\frac{\vol_n(K+\epsilon L)-\vol_n(K)}{\epsilon}=\frac{1}{n}\int_{\s^{n-1}}h_L(u)dS_K(u),
	    \label{eq:mixed_2}
\end{equation}
where $h_K(x)=\sup_{y\in K}\langle y,x\rangle$ is the support function of $K$. Support functions work well with Minkowski summation: for $\alpha,\beta>0$ and compact, convex sets $K$ and $L$, one has
$$h_{\alpha K +\beta L} = \alpha h_K + \beta h_L$$
pointwise. The next step is to obtain a formula similar to \eqref{eq:mixed_2} for $\mu(K;L).$

We start the case when $L=B_2^n$. The outer-Minkowski content of a Borel set $A\subset\R^n$ with respect to a Borel measure $\mu$ (not necessarily with density) is given by
$$\mu^+(\partial A) = \liminf_{\epsilon\to 0^+}\frac{\mu\left(K+\epsilon B_2^n\right)-\mu(K)}{\epsilon}.$$
This is a classical definition from the theory of metric measure spaces. Let $K$ be a convex body and $\mu$ a Borel measure $\mu$ on $\R^n$ with density $\phi$. Then, $\mu^+(\partial K)$ is called the \textit{weighted surface area} of $K$ and
\begin{equation}\mu^+(\partial K):=\lim_{\epsilon\to 0^+}\frac{\mu\left(K+\epsilon B_2^n\right)-\mu(K)}{\epsilon}=\int_{\partial K}\phi(y)dy,
\label{eq_bd}
\end{equation}
i.e. the $\liminf$ is a limit and an integral formula is known. It was folklore for a long time that the formula held when $\phi$ is continuous (see e.g. the work by Ball concerning the Gaussian measure \cite{Bal93}). It was shown recently \cite{KL23} that the formula \eqref{eq_bd} holds when $\phi$ contains $\partial K$ in its Lebesgue set. With this minor assumption on $\phi$, one can define the \textit{weighted surface area measure}, denoted $S^{\mu}_{K},$ via the Riesz-Markov-Kakutani representation theorem, since, for a continuous $f\in \mathcal{C}(\s^{n-1}),$
$$f\mapsto \int_{\partial K}f(n_K(y))\phi(y)dy$$ 
is a positive linear functional. That is, by definition $S^{\mu}_{K}$ satisfies the following change of variables formula:
$$\int_{\partial K}f(n_K(y))\phi(y)dy=\int_{\s^{n-1}}f(u)S^{\mu}_{K}(u).
$$
With this rigorous definition of weighted surface area available, one can prove (see \cite{KL23}) the following integral representation of mixed measures by setting $f=h_L$ for some compact, convex set $L\subset \R^n$ (see \cite{Liv19} for an alternative proof in the case when the measure has continuous density). 
\begin{prop}[Representation of mixed measures]
\label{prop:arb_mixed}
	Let $L$ be a compact, convex set and $K$ be a convex body both in $\R^n$. Suppose that $\mu$ is a Borel measure on $\R^n$ whose density contains $\partial K$ in its Lebesgue set. Then, 
	\begin{equation}
	    \mu(K;L)=\int_{\s^{n-1}}h_L(u)dS^{\mu}_{K}(u).
	    \label{eq:arb_mixed}
	\end{equation}
\end{prop}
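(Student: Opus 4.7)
The plan is to interpret $\mu(K;L)$ as a weighted version of the Minkowski content of $\partial K$ in the "$L$-direction," and then parametrize the annular region $(K+\epsilon L)\setminus K$ via the Gauss map of $K$. Since $\mu$ has a density $\phi$, we can rewrite
\[
\mu(K+\epsilon L)-\mu(K)=\int_{(K+\epsilon L)\setminus K}\phi(y)\,dy,
\]
so everything reduces to understanding the first-order asymptotics of this integral as $\epsilon\to 0^+$.

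The first step is to handle the case when $K$ is of class $C^2_+$, so that the Gauss map $n_K:\partial K\to\s^{n-1}$ is a $C^1$ diffeomorphism. In this case, for every $u\in\s^{n-1}$ the point of $\partial(K+\epsilon L)$ with outer normal $u$ is $n_K^{-1}(u)+\epsilon \nabla h_L(u)$ (here I am using smoothness of $L$; for general $L$ one replaces this gradient by a suitable selection in $\partial h_L(u)$). More importantly, the shell $(K+\epsilon L)\setminus K$ may be parametrized as
\[
\bigl\{y+t\,n_K(y):y\in\partial K,\ t\in[0,\epsilon h_L(n_K(y))]\bigr\}
\]
(up to an $o(\epsilon)$ discrepancy), and the change of variables produces a Jacobian $J(y,t)=\prod_{i=1}^{n-1}(1-t\kappa_i(y))$ in terms of the principal curvatures, which tends to $1$ uniformly as $t\to 0$. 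Thus
\[
\mu(K+\epsilon L)-\mu(K)=\int_{\partial K}\!\int_0^{\epsilon h_L(n_K(y))}\!\phi\bigl(y+t\,n_K(y)\bigr)J(y,t)\,dt\,dy+o(\epsilon).
\]

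The second step is to divide by $\epsilon$ and pass to the limit. For each $y$ in the Lebesgue set of $\phi$ (by hypothesis this includes all of $\partial K$), the inner integral divided by $\epsilon$ converges to $h_L(n_K(y))\phi(y)$. A dominated convergence argument, using local boundedness of $\phi$ near the compact set $\partial K$ (which follows from $\phi$ being locally integrable plus the Lebesgue-set assumption), produces
\[
\mu(K;L)=\int_{\partial K}h_L(n_K(y))\phi(y)\,dy,
\]
and in particular the $\liminf$ in the definition of $\mu(K;L)$ is an honest limit. Invoking the definition of $S^\mu_K$ via Riesz--Markov--Kakutani, for which the defining change-of-variables formula was recalled just before the proposition, rewrites the right-hand side as $\int_{\s^{n-1}}h_L(u)\,dS^\mu_K(u)$, which is the claim.

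The final step is to remove the $C^2_+$ assumption by approximating a general convex body $K$ in the Hausdorff metric by a sequence $K_j\in C^2_+$. By continuity of $K\mapsto S^\mu_K$ (recalled from the prequel \cite{FLMZ23_1}) combined with continuity of support functions under Hausdorff convergence, the right-hand side passes to the limit. On the left-hand side, one needs that $\mu(K_j;L)\to\mu(K;L)$; this is the point where the Lebesgue-set hypothesis on $\partial K$ is essential, since it gives enough regularity of $\phi$ near $\partial K$ to pass the limit through the difference quotient. I expect this approximation step to be the main technical obstacle, more delicate than the smooth case, because the Lebesgue-set hypothesis is pointwise on $\partial K$ and not obviously stable under perturbing $K$. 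The cleanest way around this is to cite the full proof in \cite{KL23}, which treats precisely this minimal regularity; my job here is just to give the geometric skeleton.
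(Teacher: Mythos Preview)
The paper does not actually prove this proposition: it is stated in Appendix~A as a known result, with the full proof attributed to \cite{KL23} (and an alternative proof under the stronger hypothesis of continuous density attributed to \cite{Liv19}). So there is nothing to compare your argument against in the paper itself; your sketch is essentially an outline of the kind of argument one expects to find in those references, and you correctly identify that the passage from the smooth case to the general Lebesgue-set hypothesis is where the real work lies and must be deferred to \cite{KL23}.

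One small correction to your sketch: the claim that ``local boundedness of $\phi$ near $\partial K$ follows from $\phi$ being locally integrable plus the Lebesgue-set assumption'' is not right. The Lebesgue-set condition only controls ball averages of $\phi$ near points of $\partial K$, not pointwise values of $\phi$ in a neighborhood, so it does not yield a dominating function for the inner integral along normal segments. This is precisely the technical gap that \cite{KL23} has to overcome, and it is why your honest admission at the end---that the approximation step is the delicate part and should be cited rather than reinvented---is the correct attitude. Your smooth-case argument is fine as a heuristic (and matches the \cite{Liv19} approach), but the justification you give for dominated convergence would need to be replaced by a more careful argument even there.
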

\noindent Notice that Proposition~\ref{prop:arb_mixed} also yields existence. Recalling that $\mu(tK;K)=\odv{}{t}\mu(tK)$ when $K$ is a convex set, we can use \eqref{eq:arb_mixed} in conjunction with \eqref{eq:measure_mixed_relate_0} when $K$ is a convex body to obtain
\begin{equation}
    \mu(K)=\int_0^1\int_{\s^{n-1}}h_K(u)dS^{\mu}_{tK}(u).
    \label{eq:measure_mixed_relate}
\end{equation}

Next, we have the following result for $\mu(A;B,C)$. Recall that $S_{A[n-2],B}$ denotes the mixed surface area measure of a convex body $A$ and a compact, convex set $B$ (see \cite[Section 5.1]{Sch14:book} for a precise definition). We introduced the weighted analog of the mixed surface area measure $S_{A[n-2],B}$, which is denoted $S^\mu_{A;B}$.

\begin{defn}\label{defn:WMSAM}
Let $A$ be a $C^2_+$ convex body and $B$ be an arbitrary compact, convex set in $\R^n$, $n\geq 2$, and
$\mu$ be a Borel measure with $C^1$ density $\phi$. The  weighted mixed surface area measure $S^\mu_{A;B}$ is the signed measure on $\s^{n-1}$ defined by
$$
dS^\mu_{A;B}(u)=\phi(n^{-1}_A(u))dS_{A[n-2],B}(u) + 
\frac{1}{n-1}\langle\nabla \phi(n^{-1}_A(u)),\nabla h_B(u)\rangle dS_{A}(u) .
$$
\end{defn}

\begin{thm}[Representation of mixed measures of three bodies \cite{FLMZ24:1}]
    Let $\mu$ be a Borel measure on $\R^n,$ $n\geq 2,$ with $C^2$ density $\phi$. For a convex body $A$ and compact, convex sets $B$ and $C$, one has
\begin{equation}
\label{eq:deriv}
\begin{split}
\mu&(A;B,C)=(n-1)\int_{\s^{n-1}}h_C(u) dS^\mu_{A;B}(u).
\end{split}
\end{equation}
\label{t:best_thm}
\end{thm}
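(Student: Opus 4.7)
The plan is to reduce the bivariate mixed derivative to a univariate one, apply the already-established single-body representation from Proposition~\ref{prop:arb_mixed}, and then differentiate in the remaining variable using the $C^2_+$ structure of $A$ and the $C^2$ regularity of the density. More precisely, formula \eqref{eq:limit_def_alt} gives
$$\mu(A;B,C)=\lim_{s\to 0^+}\frac{\mu(A+sB;C)-\mu(A;C)}{s},$$
and since each $A+sB$ is a convex body and $\phi\in C^2$ has $\partial(A+sB)$ in its Lebesgue set, Proposition~\ref{prop:arb_mixed} yields
$$\mu(A+sB;C)=\int_{\s^{n-1}}h_C(u)\,dS^{\mu}_{A+sB}(u)=\int_{\s^{n-1}}h_C(u)\,\phi\!\left(n_{A+sB}^{-1}(u)\right)dS_{A+sB}(u).$$

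Next I would exploit two features. First, because $A$ is $C^2_+$ and $B$ is an arbitrary compact convex set, for all sufficiently small $s\geq 0$ the body $A+sB$ is still $C^2_+$, and the inverse Gauss map coincides with the gradient of the support function: $n_{A+sB}^{-1}(u)=\nabla h_{A+sB}(u)=\nabla h_A(u)+s\nabla h_B(u)$, using additivity of support functions under Minkowski sum. Second, the classical polynomial expansion from the Brunn--Minkowski theory,
$$S_{A+sB}=\sum_{j=0}^{n-1}\binom{n-1}{j}s^{j}\,S_{A[n-1-j],B[j]},$$
identifies $\frac{d}{ds}S_{A+sB}\big|_{s=0}=(n-1)\,S_{A[n-2],B}$ in the weak sense on $\mathcal{C}(\s^{n-1})$. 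Combining these via the product rule, and using the chain rule on the $\phi$-factor,
$$\frac{d}{ds}\phi\!\left(\nabla h_A(u)+s\nabla h_B(u)\right)\bigg|_{s=0}=\bigl\langle\nabla\phi(n_A^{-1}(u)),\,\nabla h_B(u)\bigr\rangle,$$
I would obtain
\begin{align*}
\mu(A;B,C)&=(n-1)\int_{\s^{n-1}}h_C(u)\,\phi(n_A^{-1}(u))\,dS_{A[n-2],B}(u)\\
&\quad +\int_{\s^{n-1}}h_C(u)\,\bigl\langle\nabla\phi(n_A^{-1}(u)),\nabla h_B(u)\bigr\rangle\,dS_A(u),
\end{align*}
which is exactly $(n-1)\int_{\s^{n-1}}h_C(u)\,dS^{\mu}_{A;B}(u)$ after recognizing Definition~\ref{defn:WMSAM}.

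The main obstacle will be justifying the interchange of the limit in $s$ with the integral. Two things move with $s$: the integrand (through $\phi\circ\nabla h_{A+sB}$) and the measure $S_{A+sB}$. For the integrand, $C^2_+$ of $A$ plus compactness of $B$ keeps $\nabla h_A+s\nabla h_B$ in a fixed compact set of $\R^n$ for small $s$, so the $C^2$ bounds on $\phi$ furnish a uniform Taylor remainder bound of order $s^2$; dominated convergence against the continuous functional $u\mapsto h_C(u)$ on $\s^{n-1}$ then handles this piece. For the measure variation, the polynomial expansion of $S_{A+sB}$ gives an \emph{exact} Taylor expansion in $s$ against any continuous test function, so no further regularity is needed. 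Splitting $\mu(A+sB;C)-\mu(A;C)$ into the two natural telescoping differences and applying these two estimates separately gives the derivative at $s=0$ with remainder $O(s)$, completing the proof. (Should the statement later be extended beyond $C^2_+$ bodies $A$, one would approximate $A$ in Hausdorff distance by $C^2_+$ bodies using \cite[Theorem~2.7.1]{Sch14:book} and invoke continuity of both sides in $A$, which is the standard route.)
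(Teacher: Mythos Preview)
Note first that the present paper does not prove this theorem: it is quoted from the prequel \cite{FLMZ23_1}, and Appendix~A only recalls the statement. So there is no ``paper's own proof'' to compare against here; your outline is the natural one and is almost certainly what the prequel does.

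That said, there is a genuine gap in your argument. You assert that for $A\in C^2_+$ and $B$ an \emph{arbitrary} compact convex set, $A+sB$ is $C^2_+$ for small $s$, and then write $n_{A+sB}^{-1}(u)=\nabla h_A(u)+s\nabla h_B(u)$. This is false in general. Take $n=2$, $A=B_2^2$ and $B=[-e_1,e_1]$: then $A+sB$ is a ``stadium'', whose boundary contains two straight segments. It is neither strictly convex nor $C^2$, and $h_B$ fails to be differentiable at $\pm e_2$. Consequently the passage to the sphere via $n_{A+sB}^{-1}$ and the pointwise chain rule on $\phi(\nabla h_A+s\nabla h_B)$ are not justified as written.

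The fix is the standard one you already invoke for $A$: carry it out for $B$ as well. First assume $B$ is $C^2_+$; then $h_{A+sB}=h_A+sh_B$ is $C^2$ with positive definite tangential Hessian, so $A+sB$ is genuinely $C^2_+$, and your product-rule computation (polynomial expansion of $S_{A+sB}$ plus chain rule on $\phi(\nabla h_A+s\nabla h_B)$, with the $O(s^2)$ Taylor remainder coming from $\phi\in C^2$) goes through cleanly. Then approximate a general compact convex $B$ by $C^2_+$ bodies via \cite[Theorem~2.7.1]{Sch14:book}; both sides of \eqref{eq:deriv} are continuous in $B$ (the left side by the definition of $\mu(A;B,C)$ and the right side because $S_{A[n-2],B}$, $h_B$ and $\nabla h_B$ vary continuously, the latter $S_A$-a.e.\ since $S_A$ is absolutely continuous for $A\in C^2_+$). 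With this two-step approximation your proof is complete.
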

\noindent This again establishes the existence of $\mu(A;B,C)$ as well under the assumptions of the theorem. The reason $S^\mu_{A;B}$ exists even when $A$ is not $C^2_+$ is with an argument using the Riesz representation theorem (however, for general $A$, there is no elegant formula like that in Definition~\ref{defn:WMSAM}). In fact, one can again use the Riesz representation theorem and approximation by mollifers to drop the regularity assumptions on $\phi$ (and so $S^\mu_{A;B}$ exists for arbitrary convex body $A$, compact, convex set $B$ and Borel measure with density $\mu$).

\noindent As a consequence of the relation between Minkowski summation and support functions, one has the following bilinearity:
\begin{equation}
\label{mixed_multi}
\mu(A;B,\alpha C_1+\beta C_2)=\alpha\mu(A;B,C_1)+\beta\mu(A;B,C_2).
\end{equation}

\section{Appendix B: an inequality for convex functions on $\R$}\label{sec:Naz}

\subsection{Establishing the inequality}

This section is dedicated to establishing the inequality \eqref{eq:convex_inequality}, which we recall here: for any non-negative, convex function  $h$ on $[a,b]$,
$$a^2+b^2+h(a)^2+h(b)^2 \geq 2 \int_a^b h(x)\sqrt{1+(h^\prime(x))^2}dx.$$
The proof was  communicated to us by Fedor Nazarov. Observe that, for every $a,b\in\R$, one has $2\left(\frac{b-a}{2}\right)^2\leq a^2+b^2.$
Consequently, we will establish the following stronger inequality.

\begin{prop}  
\label{prop:fedja-cvx}
Consider a non-negative, convex function $h:[a,b]\to \R$, then
\begin{equation}
      \label{eq:convex_inequality_2}
      2\left(\frac{b-a}{2}\right)^2+h(a)^2+h(b)^2 \geq 2 \int_a^b h(x)\sqrt{1+(h^\prime(x))^2}dx,\end{equation}
with equality if and only if $h(x)=\alpha x + \frac{\sqrt{1+\alpha^2}-\alpha}{2} b - \frac{\sqrt{1+\alpha^2}+\alpha}{2} a$ for some $\alpha \in \R$.
\end{prop}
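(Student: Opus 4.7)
First, I would normalize by translating so that $a=-\ell$ and $b=\ell$ with $\ell=(b-a)/2$, under which the inequality becomes $2\ell^2+h(-\ell)^2+h(\ell)^2\ge 2\int_{-\ell}^{\ell}h\sqrt{1+h'^2}\,dx$. The strategy is to verify the affine case explicitly and then reduce the general case to it via an arc-length parametrization of the graph.

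For the affine case $h(x)=\alpha x+\beta$, set $s:=h(-\ell)+h(\ell)=2\beta$ and note that $h(\ell)-h(-\ell)=2\alpha\ell$. Using $h(-\ell)^2+h(\ell)^2=s^2/2+2\alpha^2\ell^2$ together with the direct evaluation $\int_{-\ell}^{\ell} h\sqrt{1+h'^2}\,dx=\ell s\sqrt{1+\alpha^2}$, the two sides of the asserted inequality differ by
\[
2\ell^2(1+\alpha^2)+\tfrac{s^2}{2}-2\ell s\sqrt{1+\alpha^2}=\tfrac{1}{2}\bigl(2\ell\sqrt{1+\alpha^2}-s\bigr)^2\ge 0,
\]
with equality precisely when $s=2\ell\sqrt{1+\alpha^2}$, which recovers the stated extremal $\beta=\ell\sqrt{1+\alpha^2}$.

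For a general convex $h$ (which by smoothing I may assume $C^2$ with $h''>0$), I would parametrize the graph by arc length $t\in[0,\Lambda]$, where $\Lambda=\int_{-\ell}^{\ell}\sqrt{1+h'^2}\,dx$, with tangent angle $\theta(t)\in(-\pi/2,\pi/2)$, non-decreasing by convexity, and write $y(t)=h(-\ell)+\int_0^t\sin\theta(u)\,du$. A Fubini computation yields $2\int_0^\Lambda y(t)\,dt=s\Lambda-\sigma$ with $\sigma:=2\int_0^\Lambda(t-\Lambda/2)\sin\theta(t)\,dt$, and Chebyshev's integral inequality applied to the non-decreasing $\sin\theta$ and the mean-zero, non-increasing $\Lambda/2-t$ gives the weak bound $\sigma\ge 0$. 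Combined with the algebraic identity $2\ell^2+h(-\ell)^2+h(\ell)^2=(\Lambda_{\mathrm{ch}}^2+s^2)/2$, where $\Lambda_{\mathrm{ch}}:=\sqrt{L^2+\Delta^2}$ with $L=2\ell$ and $\Delta=h(\ell)-h(-\ell)$, the target inequality is equivalent to the sharpened estimate
\[
\sigma+\tfrac{1}{2}(s-\Lambda)^2\ \ge\ \tfrac{1}{2}\bigl(\Lambda^2-\Lambda_{\mathrm{ch}}^2\bigr).
\]

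The principal obstacle is this sharpened lower bound on $\sigma$, since the bare Chebyshev estimate is tight only for affine $h$ (when $\theta$ is constant), whereas $\Lambda-\Lambda_{\mathrm{ch}}$ can be arbitrarily large in general (consider $h(x)=M|x|$ as $M\to\infty$). To handle this I would rewrite both $\sigma$ and $\Lambda^2-\Lambda_{\mathrm{ch}}^2=2\iint_{[0,\Lambda]^2}\sin^2\!\bigl(\tfrac{\theta(t)-\theta(t')}{2}\bigr)\,dt\,dt'$ as symmetric double integrals, use the identity $\sin\theta(t)-\sin\theta(t')=2\cos\!\bigl(\tfrac{\theta(t)+\theta(t')}{2}\bigr)\sin\!\bigl(\tfrac{\theta(t)-\theta(t')}{2}\bigr)$, and exploit the sign constraint $\cos\ge 0$ on $(-\pi/2,\pi/2)$ together with the monotonicity of $\theta$ to compare the two bilinear forms. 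Since the comparison does not hold pointwise in the double integral, a global Cauchy--Schwarz or rearrangement-type argument is required; tracking equality at every step forces $\theta$ to be constant (hence $h$ affine) with $s=2\ell\sqrt{1+\alpha^2}$, completing the extremal characterization.
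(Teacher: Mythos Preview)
Your reduction is correct up through the displayed target
\[
\sigma+\tfrac12(s-\Lambda)^2\ \ge\ \tfrac12\bigl(\Lambda^2-\Lambda_{\mathrm{ch}}^2\bigr),
\]
and your double-integral rewritings of $\sigma$ and $\Lambda^2-\Lambda_{\mathrm{ch}}^2$ are fine. But the proof stops at exactly the point where the difficulty lies. You yourself flag it: ``the comparison does not hold pointwise in the double integral, a global Cauchy--Schwarz or rearrangement-type argument is required.'' That sentence names the obstacle without overcoming it. No such argument is supplied, and it is not clear which Cauchy--Schwarz pairing or rearrangement you have in mind; the integrand of $\sigma$ carries the factor $(t-t')\cos\bigl(\tfrac{\theta+\theta'}{2}\bigr)$ against $\sin\bigl(\tfrac{\theta-\theta'}{2}\bigr)$, while the right-hand side carries $\sin^2$, and there is no evident domination between them without further structure. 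Moreover, the term $\tfrac12(s-\Lambda)^2$ depends on the vertical position of the graph (through the non-negativity constraint $h\ge0$), whereas your proposed double-integral comparison involves only $\theta$; you do not explain how the constraint on $s$ enters the endgame. As written, this is a plausible outline whose key inequality is left unproven.

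The paper's argument avoids this obstacle by a different decomposition. Instead of arc-length parametrization, it shifts so that the minimum of $h$ sits at $0$, writes
\[
\sqrt{1+(h')^2}=h'+\frac{1}{h'+\sqrt{1+(h')^2}},
\]
and integrates the two pieces separately. The first piece gives $h(b)^2-h(0)^2$ exactly; the second is handled by Fubini, the monotonicity of $h'$, and a Chebyshev-type bound (average of a product of oppositely monotone functions is at most the product of averages). This reduces everything to an elementary one-variable inequality that follows from AM--GM, with no residual ``sharpened Chebyshev'' step. If you want to salvage your route, you would need to actually prove $\sigma\ge\tfrac12(\Lambda^2-\Lambda_{\mathrm{ch}}^2)$ for non-decreasing $\theta:[0,\Lambda]\to(-\pi/2,\pi/2)$, which does appear to hold in examples but is not established by anything you wrote.
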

\begin{proof}
 We first note, that shifting function $h$, together with interval $[a,b]$ we may assume that the minimum of $h$ is attained at $x=0$ and thus $a \le 0$.  For ease, we replace $a$ with $-a$ to keep $a$ positive. That is, it suffices  to show, for every non-negative function $h$ that is convex over $[-a,b]$, $a,b \geq 0$, whose minimum is at $0$, the following inequality 
\begin{equation}
      \label{eq:convex_inequality_3}
      2\left(\frac{b+a}{2}\right)^2+h(-a)^2+h(b)^2 \geq 2 \int_{-a}^b h(x)\sqrt{1+(h^\prime(x))^2}dx.\end{equation}
    We denote $\ell=h(0)$. On the right-hand-side, we split the integral over $[-a,0]$ and $[0,b]$; we first estimate the part over $[0,b]$. By observing that $$\sqrt{1+(h^\prime)^2}=h^\prime + \frac{1}{h^\prime + \sqrt{1+(h^\prime)^2}},$$
we can write
$$I_b:=2\int_0^b h(t)\sqrt{1+(h^\prime(t))^2}dt=2\int_0^b h(t)h^\prime(t) dt + 2\int_0^b\frac{h(t)}{h^\prime(t) + \sqrt{1+(h^\prime(t))^2}}dt.$$
The first summand  is equal to  
$h^2(b)-\ell^2.$
In the second integral, we use that $h(t)=\ell+\int_0^th^\prime(s)ds$. We therefore obtain

\begin{align}
I_b=&h^2(b)-\ell^2 + \int_0^b\frac{2\ell dt}{h^\prime(t) + \sqrt{1+(h^\prime(t))^2}}+2 \int_{0}^b\int_0^t\frac{h^{\prime}(s) d s d t}{h^{\prime}(t)+\sqrt{1+h^{\prime}(t)^2}}\nonumber\\
=& h^2(b)-\ell^2 + \int_0^b\frac{2\ell dt}{h^\prime(t) + \sqrt{1+(h^\prime(t))^2}}+2 \int_{0}^b\int_s^b\frac{h^{\prime}(s) d t d s}{h^{\prime}(t)+\sqrt{1+h^{\prime}(t)^2}}\nonumber\\
\le & 
h^2(b)-\ell^2 + \int_0^b\frac{2\ell dt}{h^\prime(t) + \sqrt{1+(h^\prime(t))^2}}+2 \int_0^b \frac{h^{\prime}(s)(b-s) d s}{h^{\prime}(s)+\sqrt{1+h^{\prime}(s)^2}}, \label{deriv}
\end{align}
where in the last inequality we used  $h^\prime(s) \leq h^\prime(t)$, for $s\le t$. 


Next we note that the functions $z \mapsto \frac{z}{z+\sqrt{1+z^2}}$  and $h^\prime(s)$ are increasing. Thus,
$$s \mapsto \frac{h^{\prime}(s)}{h^{\prime}(s)+\sqrt{1+h^{\prime}(s)^2}}$$
is increasing on $[0,b]$. However, the function $b-s$ is decreasing on $[0,b]$. Since the average of the product of two non-negative functions with opposite monotonicity does not exceed the product of their averages, one obtains
$$2 \int_0^b \frac{h^{\prime}(s)(b-s) d s}{h^{\prime}(s)+\sqrt{1+h^{\prime}(s)^2}} \leq \int_0^b \frac{bh^\prime(s) ds}{h^\prime(s)+\sqrt{1+h^\prime(s)^2}}.$$
Combining these facts, we obtain
$$I_b \leq  h^2(b)-\ell^2 + \int_0^b\frac{(2\ell +bh^\prime(t)) dt}{h^\prime(t) + \sqrt{1+(h^\prime(t))^2}} = h^2(b)-\ell^2 +b \frac{2\ell +bg }{g + \sqrt{1+g^2}},$$
where $g=h^\prime(s^\star)$ for some $s^\star \in [0,b]$, and the equality holds via the mean value theorem for integrals.
By defining 
$$
I_a:=2\int_{-a}^{0} h(t)\sqrt{1+(h^\prime(t))^2}dt=2\int_{0}^{a} h(-t)\sqrt{1+(h^\prime(-t))^2}dt,$$ the above procedure can be repeated to obtain
$$I_a \leq  h^2(-a)-\ell^2 +a \frac{2\ell +aq }{q + \sqrt{1+q^2}},$$
where $q=-h^\prime(-\bar s)$ for some $-\bar s \in [-a,0]$. Combining the estimates together, we obtain
$$ 2 \int_{-a}^b h(x)\sqrt{1+(h^\prime(x))^2}dx \leq h^2(-a) +h^2(b) -2\ell^2 + a \frac{2\ell +aq }{q + \sqrt{1+q^2}} + b \frac{2\ell +bg }{g + \sqrt{1+g^2}}.$$
Denoting $c=\frac{a+b}{2}$, all that remains to show is that, for any $\ell, q,g \ge 0$,
\begin{equation}a \frac{2\ell +aq }{q + \sqrt{1+q^2}} + b \frac{2\ell +bg }{g + \sqrt{1+g^2}} \leq 2c^2 + 2\ell^2.
\label{eq:almost_Naz}
\end{equation}
Letting $\lambda^\star = \frac{b-a}{2}$, we can write $a=c-\lambda^\star$ and $b=c+\lambda^\star$. Consider the function
$$w(\lambda)= (c-\lambda) \frac{2\ell +(c-\lambda)q }{q + \sqrt{1+q^2}} + (c+\lambda) \frac{2\ell +(c+\lambda)g }{g + \sqrt{1+g^2}},$$
on the interval $[0,c]$. 
It thus suffices to show
$$\max_{\lambda\in [0,c]}w(\lambda) \leq 2c^2 + 2\ell^2.$$
Notice that $w(\lambda)$ is a quadratic function in $\lambda$ and the coefficient of $\lambda^2$ is positive. Thus, its maximum over $[0,c]$ occurs at an endpoint. Let us first check $\lambda =c$. We want to show
\begin{equation}\label{fed1}
\frac{c(2 \ell+2 c g)}{g+\sqrt{1+g^2}}\leq c^2+\ell^2,\end{equation}
i.e.
\begin{equation}\label{agm}2 c \ell \leq\left(\sqrt{1+g^2}-g\right) c^2+\left(\sqrt{1+g^2}+g\right) \ell^2,\end{equation}
which follows from the observation that
$$\left(\sqrt{1+g^2}-g\right)\left(\sqrt{1+g^2}+g\right)=1$$
and the inequality for arithmetic and geometric means. The case $\lambda =0$ becomes
$$
c \frac{2\ell + c q }{q + \sqrt{1+q^2}} + c \frac{2\ell +c g }{g + \sqrt{1+g^2}} \le 2(c^2+\ell^2),$$
and we finish by using (\ref{fed1}) twice.

To study the equality cases of (\ref{eq:convex_inequality_3}), we notice that it follows from (\ref{deriv}) that $h'(t)$ must be a constant. Thus $h(t)=\alpha t + \beta$. Next we must have equality in (\ref{eq:almost_Naz}), with $q=g=\alpha$ and thus equality in (\ref{agm}). Thus $c=(\alpha + \sqrt{1+\alpha^2}) \ell.$ First consider $\alpha \ge 0$ and assume  $a=0$ (we will shift, $-a,b$ and $h$ accordingly). If $\alpha \ge 0$ then
$$
\frac{b}{2}
=(\alpha + \sqrt{1+\alpha^2}) \beta \mbox{  
or 
}
\beta=\frac{\sqrt{1+\alpha^2}-\alpha}{2} b.
$$
Thus, shifting the function back we get, for the case $\alpha \ge 0$
$$
h(t) = \alpha t + \frac{\sqrt{1+\alpha^2}-\alpha}{2} b + \frac{\sqrt{1+\alpha^2}+\alpha}{2} a,
$$
as claimed (recall at the beginning of the proof, we replaced $a$ with $-a$). The case $\alpha \le 0$ follows by considering the function $h_1(t)=h(-t)=-\alpha t +\beta,$ where $t\in [-b,a]$.
\end{proof}

\subsection{Optimized form of the inequality}
In this section, we optimize the inequality \eqref{eq:convex_inequality_2}; first, consider the case when $b=1$ and $a=0$. Then, we obtain, for a non-negative function $h$ convex over $[0,1]$,
\begin{equation}
      \label{eq:convex_inequality_4}
      \frac{1}{4}+\frac{h(0)^2+h(1)^2}{2} \geq \int_0^1 h(x)\sqrt{1+(h^\prime(x))^2}dx.\end{equation}
In this case, there is equality only for functions of the form $h_{\text{lin}}(x)=(h(1)-h(0))x +h(0)$ under the additional constraint that $h(0),h(1)$ are two positive numbers satisfying $4h(1)h(0)=1$. But actually, inequality \eqref{eq:convex_inequality_4} is equivalent to \eqref{eq:convex_inequality_2}. Indeed, if $\tilde h$ is non-negative and convex over $[a,b]$, set $h(x)=(b-a)^{-1}\tilde h ((b-a)x+a)$ in \eqref{eq:convex_inequality_4} to obtain \eqref{eq:convex_inequality_2} for $\tilde h$.   

      We re-introduce the notation $L(h)=\text{arc-length of $h$}=\int_0^1 \sqrt{1+(h^\prime(x))^2}dx.$ Consider not $h$ but $h-c$ for some $c\in \R$. Then, \eqref{eq:convex_inequality_4} becomes
      \begin{equation}
      \label{eq:convex_inequality_5}
      c^2 -c(h(1)+h(0))+ cL(h)+\frac{1}{4}+\frac{h(0)^2+h(1)^2}{2} \geq \int_0^1 h(x)\sqrt{1+(h^\prime(x))^2}dx.
      \end{equation}
      We can then take infimum over $c$ in \eqref{eq:convex_inequality_5}. Letting the left-hand-side be a function in $c$, taking the derivative and solving yields 
      \begin{equation}
      \label{eq:opt}
      c_{\text{opt}}=\frac{h(1)+h(0)}{2}-\frac{L(h)}{2}.\end{equation}
      As a quick aside, if we denote $h_{\text{opt}}=h-c_{\text{opt}}$, then $h_{\text{opt}}(0)=\frac{h(0)-h(1)}{2}+\frac{L(h)}{2}$ and similarly for  $h_{\text{opt}}(1)$. Thus, \begin{equation}
          h_{\text{opt}}(1)+h_{\text{opt}}(0)=L(h)=L(h_{\text{opt}})
          \label{eq:opt_func}
      \end{equation}(from the translation invariance of arc-length). In fact,
      $h=h_{\text{opt}}$ (i.e. $c_{\text{opt}}=0$) if and only if $h(1)+h(0)=L(h)$. We deduce that $h_{\text{opt}}$ is the unique translate of $h$ satisfying \eqref{eq:opt_func}.
      
      Anyway, inserting $c_{\text{opt}}$ from \eqref{eq:opt} into \eqref{eq:convex_inequality_5} (and multiplying by a negative sign) yields
      \begin{equation}
      \label{eq:convex_inequality_6}
\int_0^1 \left[\frac{h(0)+h(1)}{2}-h(x)\right]\sqrt{1+(h^\prime(x))^2}dx \geq \frac{1}{4}\left[L(h)^2-((h(1)-h(0))^2+1)\right].
      \end{equation}
      We emphasize that \eqref{eq:convex_inequality_6} is, by construction, \eqref{eq:convex_inequality_4} applied to $h_{\text{opt}}$ but written in terms of $h$, and \eqref{eq:convex_inequality_6} holds without any further assumptions on $h$ besides being non-negative and convex over $[0,1]$. However, observe from the convexity of the function $h$ that
      $$L(h)^2\geq L(h_{\text{lin}})^2= (h(1)-h(0))^2+1,$$
      where $h_{\text{lin}}$ is the linear interpolation between $h(0)$ and $h(1)$.
      Hence, we can write Proposition~\ref{prop:fedja-cvx} in the following easier to visualize form.

\begin{prop}
Let $h:[0,1]\rightarrow \R$ be convex, and $h_{\text{lin}}$ be the linear interpolation between $h(0)$ and $h(1)$. Then:
\begin{equation}
\label{eq:convex_inequality_7}
\int_0^1 \left[\frac{h(0)+h(1)}{2}-h(x)\right]\sqrt{1+(h^\prime(x))^2}dx \geq \frac{1}{4}\left[L(h)^2- L(h_{\text{lin}})^2\right],
\end{equation}
where $L(\cdot)$ denotes arc length.
\end{prop}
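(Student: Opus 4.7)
The statement is inequality~\eqref{eq:convex_inequality_6} rewritten, since $L(h_{\mathrm{lin}})^2 = 1 + (h(1)-h(0))^2$ by direct computation (it is the squared length of the chord from $(0,h(0))$ to $(1,h(1))$). My plan is to establish \eqref{eq:convex_inequality_6} via the translate-and-optimize scheme sketched in the paper between \eqref{eq:convex_inequality_4} and \eqref{eq:convex_inequality_7}, making rigorous the one point left implicit there: Proposition~\ref{prop:fedja-cvx} requires non-negativity, so the substitution $h \mapsto h - c$ into \eqref{eq:convex_inequality_4} is legitimate only when $c \leq \min_{[0,1]} h$.

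First, I would observe that both sides of \eqref{eq:convex_inequality_7} are invariant under $h \mapsto h + d$ for any $d \in \R$: the quantity $\tfrac{h(0)+h(1)}{2}-h(x)$, the arc length $L(h)$, and $L(h_{\mathrm{lin}})$ all depend only on differences of values of $h$. So I may reduce to the case $\min_{[0,1]} h = 0$. Then I would choose $c_{\mathrm{opt}} := \tfrac{h(0)+h(1)}{2} - \tfrac{L(h)}{2}$, the value of $c$ that minimizes the right-hand side of \eqref{eq:convex_inequality_5}, and verify $c_{\mathrm{opt}} \leq 0$: picking $x_0 \in [0,1]$ with $h(x_0)=0$,
\[
L(h) \;\geq\; \sqrt{x_0^{\,2} + h(0)^2} \;+\; \sqrt{(1-x_0)^{\,2} + h(1)^2} \;\geq\; h(0) + h(1),
\]
where the first inequality uses that the arc length over each subinterval dominates the corresponding chord, and the second uses $\sqrt{a^2+b^2} \geq |b|$ term by term. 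Hence $h - c_{\mathrm{opt}}$ is non-negative and convex, and \eqref{eq:convex_inequality_4} applies to it.

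The rest is algebra. Writing $u=h(0)$, $v=h(1)$, $c=c_{\mathrm{opt}}$, and using the identity $u+v-L(h)=2c$ to collapse cross terms, \eqref{eq:convex_inequality_4} applied to $h - c$ reduces to $\int_0^1 h(x)\sqrt{1+h'(x)^2}\,dx \leq \tfrac{1}{4} + \tfrac{u^2+v^2}{2} - c^2$. Adding $\tfrac{u+v}{2} L(h)$ to both sides, substituting $c^2 = \tfrac{(u+v)^2}{4} - \tfrac{(u+v)L(h)}{2} + \tfrac{L(h)^2}{4}$, and using $\tfrac{(u+v)^2}{4} - \tfrac{u^2+v^2}{2} = -\tfrac{(u-v)^2}{4}$, I would arrive at the claimed inequality once $1 + (u-v)^2$ is recognized as $L(h_{\mathrm{lin}})^2$.

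The only non-routine step is the bound $c_{\mathrm{opt}} \leq \min h$, which is what is needed to apply Proposition~\ref{prop:fedja-cvx} rigorously to $h - c_{\mathrm{opt}}$; I expect this to be the main (mild) obstacle, and the arc-length argument above dispatches it in one line. Everything else is a straightforward expansion.
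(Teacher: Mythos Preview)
Your proposal is correct and follows essentially the same approach as the paper: specialize Proposition~\ref{prop:fedja-cvx} to $[0,1]$, apply it to the translate $h-c$, optimize in $c$, and identify the result with \eqref{eq:convex_inequality_7} via $L(h_{\mathrm{lin}})^2=1+(h(1)-h(0))^2$. The one point you make explicit that the paper glosses over is the verification $c_{\mathrm{opt}}\le\min h$ (so that $h-c_{\mathrm{opt}}\ge0$ and Proposition~\ref{prop:fedja-cvx} legitimately applies); your arc-length argument for this is correct.
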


Note that the integral on the left hand side may be considered as an integral with respect to the ``arc length measure'' on $[0,1]$ induced by $h$.

\subsection{Open question: higher dimensions}
In this section, we leave open a higher-dimension analogue of Theorem~\ref{t:contr}. That is, we leave open an inequality that is equivalent to
$\mu^+(\partial(K+L)) \geq \mu^+(\partial K)$ for $L$ a zonoid containing the origin and $\mu$ the measure on $\R^n$ with density $|x|^2$. Firstly, repeat the parameterization of $K$ as $\{(\bar x,t)\in\R^{n-1}\times\R:g(\bar x)\leq t \leq f(\bar x)\}$ where $g$ is some convex function, and $f$ is some concave function. We now suppose that $L=[0,e_n]$. If the inequality was true, we would have  that $\mu(K;[0,e_n],B_2^n) \geq 0$. We could calculate the formula like in the $n=2$ case, but this time we use \eqref{eq:sym_seg_0} directly to obtain that we would have 
$$\int_{\partial C}(|y|^2+f(y)^2)dy + \frac{2}{n-1}\int_C f(z)\sqrt{1+|\nabla f(z)|^2}dz\geq 0$$
for every function $f$ that is concave on a compact, convex set $C\subset \R^{n-1}$. Again notice the inequality is true if and only if the inequality is true for $f_-$. We then set $h=-f$ and get that the higher-dimensional analogue of Theorem~\ref{t:contr} is true if and only if the following question holds.
\begin{ques}
    Fix $n\geq 3$ and let $h$ be a non-negative, convex function on $\R^{n-1}$. Then, is it true that for every compact, convex subset $C$ of the support of $h$, one has
    \begin{equation}\int_{\partial C}(|y|^2+h(y)^2)d\mathcal{H}^{n-2}(y) \geq \frac{2}{n-1}\int_C h(z)\sqrt{1+|\nabla h(z)|^2}dz?
    \label{eq:naz_higher}
    \end{equation}
\end{ques}

We remark that we verified the inequality \eqref{eq:naz_higher} for $n=3$ with $C=[0,1]\times [0,\epsilon]$ and $h=x_1^\alpha+\lambda x_2^\beta$, $\alpha,\beta\in\{1,2,3\}$, $\epsilon,\lambda >0$.

      \section{Appendix C: supermodular measures must have density}
      \label{sec:density}
      Inspired by Borell's classification of $s$-concave measures \cite{Bor75a}, we show that a Radon measure being supermodular over the class of convex bodies implies the measure has density. In particular, one can then approximate the density with continuously differentiable densities, each of which is supermodular, yielding alternative starting points in the proofs of Theorems~\ref{t:Radon} and \ref{t:surarecom}.
\begin{lem}
\label{l:density}
    Let $\mu$ be a Radon measure on $\R^n$ such for every pair of convex bodies $A$ and $C$ and every $r\geq 0$, one has
    \begin{equation}
        \mu(A+rB_2^n+C) +\mu(A) \geq \mu(A+rB_2^n) + \mu(A+C).
        \label{eq:sup_mod_ball}
    \end{equation}
    Then $\mu$ is absolutely continuous with respect to the Lebesgue measure. 
\end{lem}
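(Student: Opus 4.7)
The plan is to prove the stronger statement that $\mu$ must be a constant multiple of $\vol_n$, which immediately gives absolute continuity. The strategy has two main steps: first, rule out atoms; then, by choosing $A$ and $C$ as diametrically placed balls, show that $y\mapsto \mu(B_2^n(r,y))$ is constant for every fixed $r>0$. This constancy forces $\mu$ to be translation-invariant, and uniqueness of Haar measure then yields $\mu=c\vol_n$.

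For the no-atoms step, suppose toward contradiction that $\mu(\{x_0\})=c>0$. I would take $A=B_2^n(\epsilon, x_0-\tfrac{3\epsilon}{2}e_1)$, $C=B_2^n(\epsilon, \tfrac{3\epsilon}{2}e_1)$, and $r=\epsilon$. Direct computation of the centers and radii (recalling that Minkowski sums of Euclidean balls add centers and radii) shows $A+rB_2^n=B_2^n(2\epsilon, x_0-\tfrac{3\epsilon}{2}e_1)$, $A+C=B_2^n(2\epsilon, x_0)$, and $A+rB_2^n+C=B_2^n(3\epsilon, x_0)$; observe that $x_0\notin A$ (its distance to the center is $\tfrac{3\epsilon}{2}>\epsilon$), while $x_0$ belongs to each of the other three sets. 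Decomposing $\mu=c\,\delta_{x_0}+\mu'$ with $\mu'(\{x_0\})=0$, the atomic part contributes exactly $-c$ to $\mu(A+rB_2^n+C)+\mu(A)-\mu(A+rB_2^n)-\mu(A+C)$, while the four $\mu'$-terms are all bounded by $\mu'(B_2^n(4\epsilon, x_0))\to 0$ as $\epsilon\to 0^+$ by continuity from above of the Radon measure $\mu'$. Thus the supermodularity inequality is eventually violated, a contradiction.

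For the second step, I would apply the hypothesis with $A=B_2^n(\rho,z+y)$ and $C=B_2^n(\rho,z-y)$ for arbitrary $y,z\in\R^n$ and $\rho>0$. Computing the Minkowski sums yields
\[
\mu(B_2^n(2\rho+r,2z))-\mu(B_2^n(2\rho,2z))\geq \mu(B_2^n(\rho+r,z+y))-\mu(B_2^n(\rho,z+y)).
\]
Sending $\rho\to 0^+$, using continuity from above and the absence of atoms on both sides, this reduces to $\mu(B_2^n(r,2z))\geq \mu(B_2^n(r,z+y))$. Since $(2z,z+y)$ covers all of $\R^n\times\R^n$ as $(y,z)$ varies, we obtain $\mu(B_2^n(r,u))\geq \mu(B_2^n(r,v))$ for every $u,v\in\R^n$, hence equality. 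So $y\mapsto\mu(B_2^n(r,y))$ is constant for every $r>0$; consequently $\mu$ agrees with each of its translates on all closed balls, and by outer regularity of Radon measures this propagates to every Borel set, making $\mu$ translation-invariant. The main obstacle is the no-atoms step: one must find the precise geometric configuration in which a Dirac mass contributes with the correct negative sign, while simultaneously the remaining (continuous) part of $\mu$ contributes a negligible amount as $\epsilon\to 0$; the offset $\tfrac{3\epsilon}{2}$ is exactly what makes this work.
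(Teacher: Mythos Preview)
Your argument is correct and, in fact, proves more than the lemma asks: you obtain $\mu=c\,\vol_n$ directly, not merely absolute continuity. The route is genuinely different from the paper's. The paper first rules out atoms (using a doubling inequality $\mu(x_k+2rB_2^n)\ge 2\mu(x_k+rB_2^n)$ along a sequence $x_k\to x_0$), and then invokes a ``bad point'' differentiation argument \`a la Rudin: if $\mu$ were not absolutely continuous there would be a point $z$ with $2^{np_j}\mu(B_2^n(z,2^{-p_j}))\to\infty$, and supermodularity spreads this blow-up to every point, forcing $\mu(Q)=\infty$ for all cubes. Your approach instead uses diametrically placed balls to obtain, after the no-atoms step and sending $\rho\to 0$, that $y\mapsto\mu(B_2^n(r,y))$ is constant, and then concludes via Haar uniqueness. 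This is more elementary (no differentiation theorem needed for the singular part) and only uses the hypothesis with $A,B,C$ all Euclidean balls; as a bonus it gives an independent proof of the paper's Theorem~\ref{t:Radon} and even of Theorem~\ref{t:surarecom}.

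One small point to tighten: your passage from ``$\mu$ and each translate $\tau_a\mu$ agree on all closed balls'' to ``$\mu$ is translation-invariant'' is not a consequence of outer regularity alone, since closed balls are not a $\pi$-system. You need a standard covering/differentiation fact: two Radon measures on $\R^n$ that agree on all balls coincide (e.g., via the Besicovitch differentiation theorem, or by noting that $\mu\ast\chi_{rB_2^n}$ is constant for every $r$, hence so is $\mu$ convolved with any radial mollifier, and passing to the weak limit as in the paper's Lemma~\ref{l:points}). With that caveat, your proof stands.
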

\begin{proof}
It is easy to see that \eqref{eq:sup_mod_ball} cannot hold if the singular part of $\mu$ contains a Dirac point mass. Indeed, let $x$ be a point mass so that $\mu(\{x\})=a>0$. Let $\{x_k\}$ be a sequence such that $x_n \to x$ in norm and $\mu(\{x_k\})=0$ for all $k\in \N$; such a sequence exists, otherwise we would contradict the Radon assumption. We use \eqref{eq:sup_mod_ball} with $A=\{x_k\},$ and let $C=rB_2^n$, to get
$$\mu(x_k +2rB_2^n) \geq 2 \mu(x_k+rB_2^n).$$
Now select  a sequence of radii $r=r_k \to 0$ such that $x_k+r_kB_2^n \supset x_{k+1}+r_{k+1} B_2^n$. 
Then, the right-hand side of the above inequality converges to  $2a$ and the left-hand side converges to $a$, which gives a contradiction.




We will use the notation $Q(x,r)$ for the cube centered at $x\in \R^n$ and whose side-lengths is  $r$. We will need the following classical fact (see e.g. \cite[Theorem 8.11]{Rud87:book}): if $\mu$ is a Borel measure which is not absolutely continuous with respect to the Lebesgue measure,
then there exists a point $z$ and a sequence of integers 
$p_j \rightarrow \infty$, such that
$$
\lim_{j \rightarrow \infty}\, 2^{np_j}\, \mu\big(Q(z,2^{-p_j})\big) = \infty.
$$
From the fact that $\mu$ is a Borel measure, we may replace $Q(z,r)$ with $B_2^n(z,r)$, the ball of radius $r$ centered at $z$, by using set-inclusions and making $p_j$ bigger, i.e. one has that there exists a sequence of integers 
$p_j \rightarrow \infty$, such that
$$
\lim_{j \rightarrow \infty}\, 2^{np_j}\, \mu\big(B_2^n(z,2^{-p_j})\big) = \infty.
$$

\noindent Let us call any point $z$ that has the above property a \textit{bad point.} Fix $x \in \R^n$, and notice that \eqref{eq:sup_mod_ball}, with $A=\{z\}$, $B=B(0,2^{-p_j})$ and $C=\{x-z\}$,  yields 
$$\mu(B_2^n(x,2^{-p_j})) + \mu(\{z\})\geq \mu(B_2^n(z,2^{-p_j})) + \mu(\{x\}) \geq \mu(B_2^n(z,2^{-p_j})).$$ 

Using that $\mu(\{z\})=0$, one then deduces that every $x\in\R^n$ is a bad point since
$$2^{np_j}\mu(B_2^n(x,2^{-p_j}))\geq 2^{np_j}\mu(B_2^n(z,2^{-p_j})):=C_j,$$
where $C_j\to\infty$ as $j\to\infty$. Now, fix an arbitrary cube $Q$. Notice, for a fixed $j$ large enough, that $Q$ can be subdivided (with controllable error) into disjoint cubes of the form $Q_i=Q(x_i,2^{-(p_j-1)})$. Thus,
$$\mu(Q)\geq \sum_{i}\mu(Q_i) \geq 2^{-2n}C_j\sum_{i}\vol_n(Q_i) \geq 2^{-2n}C_j\left(\vol_n(Q)-n2^{-(p_j-2)}\vol_n(Q)^\frac{n-1}{n}\right).$$
Sending $j\to\infty$ yields that $\mu(Q)=\infty$ for every cube $Q$, contradicting $\mu$ being a Radon measure. We deduce that the support of the singular part of $\mu$ is the empty set, and hence $\mu$ is absolutely continuous with respect to the Lebesgue measure.
\end{proof}

\printbibliography

@article {Ale99,
    AUTHOR = {Alesker, S.},
     TITLE = {Continuous rotation invariant valuations on convex sets},
   JOURNAL = {Ann. of Math. (2)},
  FJOURNAL = {Annals of Mathematics. Second Series},
    VOLUME = {149},
      YEAR = {1999},
    NUMBER = {3},
     PAGES = {977--1005},
      ISSN = {0003-486X,1939-8980},
   MRCLASS = {52A39 (52B45)},
  MRNUMBER = {1709308},
MRREVIEWER = {P.\ McMullen},
       DOI = {10.2307/121078},
       URL = {https://doi.org/10.2307/121078},
}

@article {PK92:1,
    AUTHOR = {Pukhlikov, A. V. and Khovanski\u i, A. G.},
     TITLE = {Finitely additive measures of virtual polyhedra},
   JOURNAL = {Algebra i Analiz},
  FJOURNAL = {Rossi\u iskaya Akademiya Nauk. Algebra i Analiz},
    VOLUME = {4},
      YEAR = {1992},
    NUMBER = {2},
     PAGES = {161--185},
      ISSN = {0234-0852},
   MRCLASS = {52B45 (52B11)},
  MRNUMBER = {1182399},
MRREVIEWER = {E.\ Hertel},
}

@article{FMMZ24,
	author = {Fradelizi, M. and Madiman, M. and Meyer, M. and Zvavitch, A.},
	date-added = {2024-11-27 19:23:18 -0500},
	date-modified = {2024-11-27 19:23:33 -0500},
	doi = {10.1016/j.jfa.2023.110247},
	fjournal = {Journal of Functional Analysis},
	issn = {0022-1236,1096-0783},
	journal = {J. Funct. Anal.},
	mrclass = {52A20},
	mrnumber = {4669585},
	number = {3},
	pages = {Paper No. 110247, 41},
	title = {On the volume of the {M}inkowski sum of zonoids},
	url = {https://doi.org/10.1016/j.jfa.2023.110247},
	volume = {286},
	year = {2024},
	bdsk-url-1 = {https://doi.org/10.1016/j.jfa.2023.110247}}

@article{MS00,
	author = {M\"uller, A. and Scarsini, M.},
	date-added = {2024-11-27 19:11:13 -0500},
	date-modified = {2024-11-27 19:11:21 -0500},
	doi = {10.1006/jmva.1999.1867},
	fjournal = {Journal of Multivariate Analysis},
	issn = {0047-259X,1095-7243},
	journal = {J. Multivariate Anal.},
	mrclass = {60E15 (62H20)},
	mrnumber = {1766123},
	mrreviewer = {Lorenzo\ Peccati},
	number = {1},
	pages = {107--119},
	title = {Some remarks on the supermodular order},
	url = {https://doi.org/10.1006/jmva.1999.1867},
	volume = {73},
	year = {2000},
	bdsk-url-1 = {https://doi.org/10.1006/jmva.1999.1867}}

@book{Guz75:book,
	author = {de Guzm\'{a}n, Miguel},
	date-added = {2024-11-27 18:45:38 -0500},
	date-modified = {2024-11-27 18:45:51 -0500},
	mrclass = {28A15},
	mrnumber = {457661},
	mrreviewer = {Calixto\ P.\ Calderon},
	note = {With appendices by Antonio C\'{o}rdoba, and Robert Fefferman, and two by Roberto Moriy\'{o}n},
	pages = {xii+266},
	publisher = {Springer-Verlag, Berlin-New York},
	series = {Lecture Notes in Mathematics},
	title = {Differentiation of integrals in {$R\sp{n}$}},
	volume = {Vol. 481},
	year = {1975}}

@article{KL23,
	author = {Kryvonos, L. and Langharst, D.},
	date-added = {2024-11-27 18:42:44 -0500},
	date-modified = {2024-11-27 18:42:55 -0500},
	doi = {10.1090/tran/8992},
	fjournal = {Transactions of the American Mathematical Society},
	issn = {0002-9947,1088-6850},
	journal = {Trans. Amer. Math. Soc.},
	mrclass = {52A39 (49J10 52A21 52A40)},
	mrnumber = {4669302},
	mrreviewer = {Lei\ Ma},
	number = {12},
	pages = {8447--8493},
	title = {Weighted {M}inkowski's existence theorem and projection bodies},
	url = {https://doi.org/10.1090/tran/8992},
	volume = {376},
	year = {2023},
	bdsk-url-1 = {https://doi.org/10.1090/tran/8992}}

@article{SS24:1,
	author = {Saracco, G. and Stefani, G.},
	date-added = {2024-11-27 18:39:51 -0500},
	date-modified = {2024-11-27 18:44:28 -0500},
	doi = {10.1002/mana.202300280},
	fjournal = {Mathematische Nachrichten},
	issn = {0025-584X,1522-2616},
	journal = {Math. Nachr.},
	mrclass = {52A20 (52A40)},
	mrnumber = {4734978},
	number = {4},
	pages = {1444--1450},
	title = {On the monotonicity of weighted perimeters of convex bodies},
	url = {https://doi.org/10.1002/mana.202300280},
	volume = {297},
	year = {2024},
	bdsk-url-1 = {https://doi.org/10.1002/mana.202300280}}

@article{CR23,
	author = {Cordero-Erausquin, D. and Rotem, L.},
	date-added = {2024-11-27 18:24:05 -0500},
	date-modified = {2024-11-27 18:24:16 -0500},
	doi = {10.1214/22-aop1604},
	fjournal = {The Annals of Probability},
	issn = {0091-1798,2168-894X},
	journal = {Ann. Probab.},
	mrclass = {52A40 (26D10 60E15)},
	mrnumber = {4583060},
	mrreviewer = {Tomasz\ Tkocz},
	number = {3},
	pages = {987--1003},
	title = {Improved log-concavity for rotationally invariant measures of symmetric convex sets},
	url = {https://doi.org/10.1214/22-aop1604},
	volume = {51},
	year = {2023},
	bdsk-url-1 = {https://doi.org/10.1214/22-aop1604}}

@article{CNV04,
	author = {Cordero-Erausquin, D. and Nazaret, B. and Villani, C.},
	date-added = {2024-11-27 18:22:45 -0500},
	date-modified = {2024-11-27 18:22:52 -0500},
	doi = {10.1016/S0001-8708(03)00080-X},
	fjournal = {Advances in Mathematics},
	issn = {0001-8708,1090-2082},
	journal = {Adv. Math.},
	mrclass = {26D15 (46E35)},
	mrnumber = {2032031},
	mrreviewer = {Olivier\ Druet},
	number = {2},
	pages = {307--332},
	title = {A mass-transportation approach to sharp {S}obolev and {G}agliardo-{N}irenberg inequalities},
	url = {https://doi.org/10.1016/S0001-8708(03)00080-X},
	volume = {182},
	year = {2004},
	bdsk-url-1 = {https://doi.org/10.1016/S0001-8708(03)00080-X}}

@book{Fol99:book,
	author = {Folland, Gerald B.},
	date-added = {2023-10-05 19:26:08 -0400},
	date-modified = {2023-10-05 19:26:16 -0400},
	edition = {Second},
	isbn = {0-471-31716-0},
	mrclass = {00A05 (26-01 28-01 46-01)},
	mrnumber = {1681462},
	note = {Modern techniques and their applications, A Wiley-Interscience Publication},
	pages = {xvi+386},
	publisher = {John Wiley \& Sons, Inc., New York},
	series = {Pure and Applied Mathematics (New York)},
	title = {Real analysis},
	year = {1999}}

@article{MMR24,
	author = {Madiman, M. and Melbourne, J. and Roberto, C.},
	date-added = {2023-09-17 03:43:47 -0400},
	date-modified = {2024-11-27 19:07:46 -0500},
	journal = {Preprint, {\tt arXiv:2512.19002}},
	title = {A Quantitative Entropy Power Inequality for Dependent Random Vectors},
	year = {2025}}

@article{KL21,
	author = {Kolesnikov, A. V. and Livshyts, G. V.},
	doi = {10.1016/j.aim.2021.107689},
	fjournal = {Advances in Mathematics},
	issn = {0001-8708},
	journal = {Adv. Math.},
	mrclass = {52A40},
	mrnumber = {4238914},
	mrreviewer = {Alexandros Eskenazis},
	pages = {Paper No. 107689, 23},
	title = {On the {G}ardner-{Z}vavitch conjecture: symmetry in inequalities of {B}runn-{M}inkowski type},
	url = {https://doi-org.udel.idm.oclc.org/10.1016/j.aim.2021.107689},
	volume = {384},
	year = {2021},
	bdsk-url-1 = {https://doi-org.udel.idm.oclc.org/10.1016/j.aim.2021.107689},
	bdsk-url-2 = {https://doi.org/10.1016/j.aim.2021.107689}}

@article{FLMZ24:1,
	author = {Fradelizi, M. and Langharst, D. and Madiman, M. and Zvavitch, A.},
	date-added = {2022-11-06 17:16:45 -0500},
	date-modified = {2023-11-17 10:13:58 -0500},
	journal = {J. Math. Anal. Appl.},
	month = {January},
	number = {2},
	title = {{Weighted Brunn-Minkowski theory I: On weighted surface area measures}},
	volume = {529},
	year = {2024}}

@article{FMZ24,
	author = {Fradelizi, M. and Madiman, M. and Zvavitch, A.},
	date-added = {2022-10-01 09:42:56 -0400},
	date-modified = {2024-11-27 19:19:28 -0500},
	journal = {International Mathematics Research Notices},
	month = {August},
	pages = {11426--11454},
	title = {Sumset estimates in convex geometry},
	volume = {15},
	year = {2024}}

@article{EM20:3,
	AUTHOR = {Eskenazis, Alexandros and Moschidis, Georgios},
     TITLE = {The dimensional {B}runn-{M}inkowski inequality in {G}auss
              space},
   JOURNAL = {J. Funct. Anal.},
  FJOURNAL = {Journal of Functional Analysis},
    VOLUME = {280},
      YEAR = {2021},
    NUMBER = {6},
     PAGES = {Paper No. 108914, 19},
      ISSN = {0022-1236},
   MRCLASS = {52A40 (52A20 60E15)},
  MRNUMBER = {4193767},
MRREVIEWER = {Grigory M. Ivanov},
       DOI = {10.1016/j.jfa.2020.108914},}

@article{Hos21,
	author = {Hosle, J.},
	doi = {10.1093/imrn/rnz215},
	fjournal = {International Mathematics Research Notices. IMRN},
	issn = {1073-7928},
	journal = {Int. Math. Res. Not. IMRN},
	mrclass = {52A40},
	mrnumber = {4307682},
	number = {17},
	pages = {13046--13074},
	title = {On the comparison of measures of convex bodies via projections and sections},
	url = {https://doi-org.udel.idm.oclc.org/10.1093/imrn/rnz215},
	year = {2021},
	bdsk-url-1 = {https://doi-org.udel.idm.oclc.org/10.1093/imrn/rnz215},
	bdsk-url-2 = {https://doi.org/10.1093/imrn/rnz215}}

@article{SH21,
	author = {Shenfeld, Y. and van Handel, R.},
	date-added = {2021-12-23 21:21:30 -0500},
	date-modified = {2021-12-23 21:22:24 -0500},
	journal = {Duke Math. J., to appear},
	title = {{The extremals of Minkowski's quadratic inequality}},
	year = {2021}}

@article{LRZ22,
	AUTHOR = {Langharst, Dylan and Roysdon, Michael and Zvavitch, Artem},
     TITLE = {General measure extensions of projection bodies},
   JOURNAL = {Proc. Lond. Math. Soc. (3)},
  FJOURNAL = {Proceedings of the London Mathematical Society. Third Series},
    VOLUME = {125},
      YEAR = {2022},
    NUMBER = {5},
     PAGES = {1083--1129},
      ISSN = {0024-6115},
   MRCLASS = {52A39 (28A75 52A40)},
  MRNUMBER = {4508339},
       DOI = {10.1112/plms.12477},
	}

@article{NT13:1,
	author = {Nayar, P. and Tkocz, T.},
	doi = {10.1090/S0002-9939-2013-11609-6},
	fjournal = {Proceedings of the American Mathematical Society},
	issn = {0002-9939},
	journal = {Proc. Amer. Math. Soc.},
	mrclass = {52A40 (60G15)},
	mrnumber = {3091793},
	mrreviewer = {Jean-Charles Pinoli},
	number = {11},
	pages = {4027--4030},
	title = {A note on a {B}runn-{M}inkowski inequality for the {G}aussian measure},
	url = {https://doi-org.udel.idm.oclc.org/10.1090/S0002-9939-2013-11609-6},
	volume = {141},
	year = {2013},
	bdsk-url-1 = {https://doi-org.udel.idm.oclc.org/10.1090/S0002-9939-2013-11609-6},
	bdsk-url-2 = {https://doi.org/10.1090/S0002-9939-2013-11609-6}}

@article{Liv19,
	author = {Livshyts, G. V.},
	doi = {10.1016/j.aim.2019.106803},
	fjournal = {Advances in Mathematics},
	issn = {0001-8708},
	journal = {Adv. Math.},
	mrclass = {52A20 (28A75 42A38)},
	mrnumber = {4008520},
	mrreviewer = {P. R. Goodey},
	pages = {106803, 40},
	title = {An extension of {M}inkowski's theorem and its applications to questions about projections for measures},
	url = {https://doi-org.udel.idm.oclc.org/10.1016/j.aim.2019.106803},
	volume = {356},
	year = {2019},
	bdsk-url-1 = {https://doi-org.udel.idm.oclc.org/10.1016/j.aim.2019.106803},
	bdsk-url-2 = {https://doi.org/10.1016/j.aim.2019.106803}}

@article{Bal93,
	author = {Ball, K.},
	doi = {10.1007/BF02573986},
	fjournal = {Discrete \& Computational Geometry. An International Journal of Mathematics and Computer Science},
	issn = {0179-5376},
	journal = {Discrete Comput. Geom.},
	mrclass = {60G15 (52A40)},
	mrnumber = {1243336},
	mrreviewer = {Werner Linde},
	number = {4},
	pages = {411--420},
	title = {The reverse isoperimetric problem for {G}aussian measure},
	url = {https://doi-org.udel.idm.oclc.org/10.1007/BF02573986},
	volume = {10},
	year = {1993},
	bdsk-url-1 = {https://doi-org.udel.idm.oclc.org/10.1007/BF02573986},
	bdsk-url-2 = {https://doi.org/10.1007/BF02573986}}

@article{KM18:1,
	author = {Kolesnikov, A. V. and Milman, E.},
	doi = {10.1353/ajm.2018.0027},
	fjournal = {American Journal of Mathematics},
	issn = {0002-9327},
	journal = {Amer. J. Math.},
	mrclass = {53C21 (52A40 58D25)},
	mrnumber = {3862061},
	mrreviewer = {Yunwei Xia},
	number = {5},
	pages = {1147--1185},
	title = {Poincar\'{e} and {B}runn-{M}inkowski inequalities on the boundary of weighted {R}iemannian manifolds},
	url = {https://doi-org.udel.idm.oclc.org/10.1353/ajm.2018.0027},
	volume = {140},
	year = {2018},
	bdsk-url-1 = {https://doi-org.udel.idm.oclc.org/10.1353/ajm.2018.0027},
	bdsk-url-2 = {https://doi.org/10.1353/ajm.2018.0027}}

@article{Fen36,
	author = {Fenchel, W.},
	date-modified = {2022-01-05 11:18:40 -0500},
	journal = {C. R. Acad. Sci. Paris.},
	pages = {764--766},
	title = {{Generalisation du theoreme de Brunn et Minkowski concernant les corps convexes}},
	volume = {203},
	year = {1936}}

@article{SZ16,
	author = {Soprunov, I. and Zvavitch, A.},
	date-added = {2017-10-27 00:25:45 +0000},
	date-modified = {2017-10-27 00:26:20 +0000},
	fjournal = {International Mathematics Research Notices. IMRN},
	issn = {1073-7928},
	journal = {Int. Math. Res. Not. IMRN},
	mrclass = {52A39 (52A40)},
	mrnumber = {3632081},
	number = {23},
	pages = {7230--7252},
	title = {Bezout inequality for mixed volumes},
	year = {2016}}

@article{AFO14,
	author = {Artstein-Avidan, S. and Florentin, D. and Ostrover, Y.},
	date-added = {2015-12-02 17:21:31 +0000},
	date-modified = {2015-12-02 17:21:39 +0000},
	doi = {10.1142/S0219199713500314},
	fjournal = {Communications in Contemporary Mathematics},
	issn = {0219-1997},
	journal = {Commun. Contemp. Math.},
	mrclass = {52A39 (52A40)},
	mrnumber = {3195153},
	mrreviewer = {Christos Saroglou},
	number = {2},
	pages = {1350031, 14},
	title = {Remarks about mixed discriminants and volumes},
	url = {http://dx.doi.org/10.1142/S0219199713500314},
	volume = {16},
	year = {2014},
	bdsk-url-1 = {http://dx.doi.org/10.1142/S0219199713500314}}

@book{SS05:book,
	author = {Stein, E. M. and Shakarchi, R.},
	date-added = {2015-11-08 15:54:22 +0000},
	date-modified = {2015-11-08 15:54:37 +0000},
	isbn = {0-691-11386-6},
	mrclass = {28B15 (00-01 26-01 26A03 26A09 28-01 46-01 47-01)},
	mrnumber = {2129625 (2005k:28024)},
	mrreviewer = {Steven George Krantz},
	note = {Measure theory, integration, and Hilbert spaces},
	pages = {xx+402},
	publisher = {Princeton University Press, Princeton, NJ},
	series = {Princeton Lectures in Analysis, III},
	title = {Real analysis},
	year = {2005}}

@book{Sch14:book,
	author = {Schneider, R.},
	date-added = {2015-11-04 16:25:23 +0000},
	date-modified = {2015-11-04 16:28:11 +0000},
	edition = {expanded},
	isbn = {978-1-107-60101-7},
	mrclass = {52-02 (52A20 52A39)},
	mrnumber = {3155183},
	mrreviewer = {Andrea Colesanti},
	pages = {xxii+736},
	publisher = {Cambridge University Press, Cambridge},
	series = {Encyclopedia of Mathematics and its Applications},
	title = {Convex bodies: the {B}runn-{M}inkowski theory},
	volume = {151},
	year = {2014}}

@article{Ruz97,
	author = {Ruzsa, I. Z.},
	coden = {GEMDAT},
	doi = {10.1023/A:1004958110076},
	fjournal = {Geometriae Dedicata},
	issn = {0046-5755},
	journal = {Geom. Dedicata},
	mrclass = {52A40 (28A75 28C10 52A38)},
	mrnumber = {1475877 (99b:52016)},
	mrreviewer = {Daniel Hug},
	number = {3},
	pages = {337--348},
	title = {The {B}runn-{M}inkowski inequality and nonconvex sets},
	url = {http://dx.doi.org/10.1023/A:1004958110076},
	volume = {67},
	year = {1997},
	bdsk-url-1 = {http://dx.doi.org/10.1023/A:1004958110076}}

@article{FMMZ18,
	author = {Fradelizi, M. and Madiman, M. and Marsiglietti, A. and Zvavitch, A.},
	date-added = {2015-07-01 09:36:36 +0000},
	date-modified = {2023-10-10 04:41:39 -0400},
	journal = {EMS Surveys in Mathematical Sciences},
	number = {1/2},
	pages = {1--64},
	title = {{The convexification effect of Minkowski summation}},
	volume = {5},
	year = {2019}}

@article{MK18,
	author = {Madiman, M. and Kontoyiannis, I.},
	date-added = {2015-06-05 00:16:49 +0000},
	date-modified = {2023-09-30 15:58:46 -0400},
	journal = {IEEE Trans. Inform. Theory},
	month = {January},
	number = {1},
	pages = {77--92},
	title = {{Entropy bounds on abelian groups and the Ruzsa divergence}},
	volume = {64},
	year = {2018}}

@article{MR14,
	author = {Milman, E. and Rotem, L.},
	date-added = {2014-12-26 12:49:55 +0000},
	date-modified = {2014-12-26 12:50:08 +0000},
	doi = {10.1016/j.aim.2014.05.023},
	fjournal = {Advances in Mathematics},
	issn = {0001-8708},
	journal = {Adv. Math.},
	mrclass = {46Exx (28A12 49Q05)},
	mrnumber = {3228444},
	pages = {867--908},
	title = {Complemented {B}runn-{M}inkowski inequalities and isoperimetry for homogeneous and non-homogeneous measures},
	url = {http://dx.doi.org/10.1016/j.aim.2014.05.023},
	volume = {262},
	year = {2014},
	bdsk-url-1 = {http://dx.doi.org/10.1016/j.aim.2014.05.023}}

@article{FGM03,
	author = {Fradelizi, M. and Giannopoulos, A. and Meyer, M.},
	coden = {ISJMAP},
	date-added = {2011-08-08 14:48:49 +0000},
	date-modified = {2011-08-08 14:48:54 +0000},
	doi = {10.1007/BF02776055},
	fjournal = {Israel Journal of Mathematics},
	issn = {0021-2172},
	journal = {Israel J. Math.},
	mrclass = {52A39 (52A40)},
	mrnumber = {1997041 (2004f:52005)},
	mrreviewer = {Carla Peri},
	pages = {157--179},
	title = {Some inequalities about mixed volumes},
	url = {http://dx.doi.org/10.1007/BF02776055},
	volume = {135},
	year = {2003},
	bdsk-url-1 = {http://dx.doi.org/10.1007/BF02776055}}

@article{BM11:cras,
	author = {Bobkov, S. and Madiman, M.},
	date-added = {2011-03-30 01:48:31 -0400},
	date-modified = {2011-03-30 01:49:52 -0400},
	journal = {C. R. Acad. Sci. Paris S\'er. I Math.},
	month = {F\'evrier},
	pages = {201--204},
	title = {Dimensional behaviour of entropy and information},
	volume = {349},
	year = {2011}}

@article{GZ10,
	author = {Gardner, R. J. and Zvavitch, A.},
	coden = {TAMTAM},
	date-added = {2010-12-21 18:25:26 -0500},
	date-modified = {2010-12-21 18:25:40 -0500},
	doi = {10.1090/S0002-9947-2010-04891-3},
	fjournal = {Transactions of the American Mathematical Society},
	issn = {0002-9947},
	journal = {Trans. Amer. Math. Soc.},
	mrclass = {52A40 (60E15 60G15)},
	mrnumber = {2657682},
	number = {10},
	pages = {5333--5353},
	title = {Gaussian {B}runn-{M}inkowski inequalities},
	url = {http://dx.doi.org/10.1090/S0002-9947-2010-04891-3},
	volume = {362},
	year = {2010},
	bdsk-url-1 = {http://dx.doi.org/10.1090/S0002-9947-2010-04891-3}}

@incollection{Naz03,
	address = {Berlin},
	author = {Nazarov, F.},
	booktitle = {Geometric aspects of functional analysis},
	date-added = {2010-07-21 01:05:09 -0400},
	date-modified = {2021-12-23 21:15:43 -0500},
	mrclass = {28C99 (46B09 46B20 46N30 52A40)},
	mrnumber = {MR2083397 (2005h:28032)},
	mrreviewer = {Richard Becker},
	pages = {169--187},
	publisher = {Springer},
	series = {Lecture Notes in Math.},
	title = {On the maximal perimeter of a convex set in {${\Bbb R}^n$} with respect to a {G}aussian measure},
	volume = {1807},
	year = {2003}}

@article{ST74,
	author = {Sudakov, V.N. and Tsirel'son, B.S.},
	date-added = {2010-05-10 22:59:07 -0400},
	date-modified = {2017-01-27 22:12:35 +0000},
	journal = {Zap. Nauch. Sem. L.O.M.I.},
	month = {translated in J. Soviet Math. 9, 9--18 (1978)},
	pages = {14--24},
	title = {Extremal properties of half-spaces for spherically invariant measures},
	volume = {41},
	year = {1974}}

@article{Lat96,
	author = {Lata{\l }a, R.},
	date-added = {2010-05-10 22:57:38 -0400},
	date-modified = {2010-05-10 23:18:32 -0400},
	journal = {Studia Math.},
	number = {2},
	pages = {169--174},
	title = {{A note on the Ehrhard inequality}},
	volume = {118},
	year = {1996}}

@article{Ehr83,
	author = {Ehrhard, A.},
	coden = {MTSCAN},
	date-added = {2010-05-10 15:08:12 -0400},
	date-modified = {2010-05-10 15:08:19 -0400},
	fjournal = {Mathematica Scandinavica},
	issn = {0025-5521},
	journal = {Math. Scand.},
	mrclass = {60G15 (35K05 52A22 60B99)},
	mrnumber = {MR745081 (85f:60058)},
	mrreviewer = {A. Badrikian},
	number = {2},
	pages = {281--301},
	title = {Sym\'etrisation dans l'espace de {G}auss},
	volume = {53},
	year = {1983}}

@article{Bor03,
	author = {Borell, C.},
	date-added = {2010-04-04 13:39:28 -0400},
	date-modified = {2010-04-04 13:53:21 -0400},
	doi = {10.1016/j.crma.2003.09.031},
	fjournal = {Comptes Rendus Math\'ematique. Acad\'emie des Sciences. Paris},
	issn = {1631-073X},
	journal = {C. R. Math. Acad. Sci. Paris},
	mrclass = {60G15 (35K05 60E15)},
	mrnumber = {MR2030108 (2004k:60102)},
	mrreviewer = {Thierry Edmond Huillet},
	number = {10},
	pages = {663--666},
	title = {The {E}hrhard inequality},
	url = {http://dx.doi.org/10.1016/j.crma.2003.09.031},
	volume = {337},
	year = {2003},
	bdsk-url-1 = {http://dx.doi.org/10.1016/j.crma.2003.09.031}}

@article{FKG71,
	author = {Fortuin, C. M. and Kasteleyn, P. W. and Ginibre, J.},
	date-added = {2009-08-26 17:27:51 -0400},
	date-modified = {2009-08-26 17:28:00 -0400},
	fjournal = {Communications in Mathematical Physics},
	issn = {0010-3616},
	journal = {Comm. Math. Phys.},
	mrclass = {82.62},
	mrnumber = {MR0309498 (46 \#8607)},
	mrreviewer = {C. A. Hurst},
	pages = {89--103},
	title = {Correlation inequalities on some partially ordered sets},
	volume = {22},
	year = {1971}}

@article{BM12:jfa,
	author = {Bobkov, S. and Madiman, M.},
	date-added = {2009-08-22 23:31:06 -0400},
	date-modified = {2012-07-10 14:44:16 +0000},
	journal = {J. Funct. Anal.},
	pages = {3309--3339},
	title = {Reverse {B}runn-{M}inkowski and reverse entropy power inequalities for convex measures},
	url = {http://arxiv.org/abs/1109.5287},
	volume = {262},
	year = {2012},
	bdsk-url-1 = {http://arxiv.org/abs/1109.5287}}

@article{Bor75a,
	author = {Borell, C.},
	date-added = {2009-05-03 19:28:51 -0400},
	date-modified = {2009-05-03 19:29:46 -0400},
	fjournal = {Periodica Mathematica Hungarica. Journal of the J\'anos Bolyai Mathematical Society},
	issn = {0031-5303},
	journal = {Period. Math. Hungar.},
	mrclass = {26A86 (60A10)},
	mrnumber = {MR0404559 (53 \#8359)},
	mrreviewer = {M. D. Perlman},
	number = {2},
	pages = {111--136},
	title = {Convex set functions in {$d$}-space},
	volume = {6},
	year = {1975}}

@book{Fuj05:book,
	author = {Fujishige, S.},
	date-added = {2008-04-21 20:17:04 -0400},
	date-modified = {2008-04-21 20:23:55 -0400},
	edition = {Second},
	isbn = {0-444-52086-4},
	mrclass = {90C10 (05A18 05B35 52A41 90C27 90C35 90C57)},
	mrnumber = {MR2171629 (2006d:90098)},
	pages = {xiv+395},
	publisher = {Elsevier B. V., Amsterdam},
	series = {Annals of Discrete Mathematics},
	title = {Submodular functions and optimization},
	volume = {58},
	year = {2005}}

@incollection{Mad08:itw,
	address = {Porto, Portugal},
	author = {Madiman, M.},
	booktitle = {Proc. IEEE Inform. Theory Workshop},
	date-added = {2008-03-03 13:30:38 -0500},
	date-modified = {2010-04-02 00:21:30 -0400},
	pages = {303--307},
	title = {On the entropy of sums},
	year = {2008}}

@book{Rud87:book,
	address = {New York},
	author = {Rudin, W.},
	date-modified = {2007-05-23 22:48:33 +0530},
	publisher = {McGraw-Hill},
	title = {Real and Complex Analysis},
	year = {1987}}


\noindent Matthieu Fradelizi
\\
Univ Gustave Eiffel, Univ Paris Est Creteil, CNRS LAMA UMR8050, F-77447 Marne-la-Vall\'ee, France.
\\
E-mail address: matthieu.fradelizi@univ-eiffel.fr
\vspace{2mm}
\\
\noindent Dylan Langharst
\\
Institut de Math\'ematiques de Jussieu, Sorbonne Universit\'e, 75005 Paris, France.
\\
E-mail address: dylan.langharst@imj-prg.fr
\vspace{2mm}
\\
\noindent Mokshay Madiman 
\\
University of Delaware, Department of Mathematical Sciences, 501 Ewing Hall, Newark, DE 19716, USA. 
\\
E-mail address: madiman@udel.edu
\vspace{2mm}
\\
\noindent Artem Zvavitch
\\
Department of Mathematical Sciences, Kent State University, Kent, OH 44242, USA. 
\\
E-mail address: azvavitc@kent.edu

\end{document}